\DeclareRobustCommand{\lyxsout}[1]{\ifx\\#1\else\sout{#1}\fi}
\theoremstyle{plain}
\newtheorem{thm}{\protect\theoremname}[section]
\theoremstyle{remark}
\newtheorem{rem}[thm]{\protect\remarkname}
\theoremstyle{plain}
\newtheorem{lem}[thm]{\protect\lemmaname}
\theoremstyle{plain}
\newtheorem{prop}[thm]{\protect\propositionname}
\theoremstyle{plain}
\newtheorem{cor}[thm]{\protect\corollaryname}
\theoremstyle{definition}
\newtheorem{example}[thm]{\protect\examplename}
\theoremstyle{definition}
\newtheorem{defn}[thm]{\protect\definitionname}
\numberwithin{equation}{section}
\providecommand{\corollaryname}{Corollary}
\providecommand{\definitionname}{Definition}
\providecommand{\examplename}{Example}
\providecommand{\remarkname}{Remark}
\providecommand{\theoremname}{Theorem}
\newcommand{\vertiii}[1]{{\left\vert\kern-0.4ex\left\vert\kern-0.4ex\left\vert #1 
    \right\vert\kern-.4ex\right\vert\kern-0.4ex\right\vert}}
\providecommand{\propositionname}{Proposition}
\providecommand{\corollaryname}{Corollary}
\providecommand{\definitionname}{Definition}
\providecommand{\examplename}{Example}
\providecommand{\lemmaname}{Lemma}
\providecommand{\propositionname}{Proposition}
\providecommand{\remarkname}{Remark}
\providecommand{\theoremname}{Theorem}
\begin{document}
\title{Fractional Poisson Analysis in Dimension one}
\author{\textbf{Jerome B. Bendong}\\
 CSM, MSU-Iligan Institute of Technology,\\
 Tibanga, 9200 Iligan City, Philippines\\
 Email: jerome.bendong@g.msuiit.edu.ph\and \textbf{Sheila M. Menchavez}\\
 CSM, MSU-Iligan Institute of Technology,\\
 Tibanga, 9200 Iligan City, Philippines\\
 Email: sheila.menchavez@g.msuiit.edu.ph\and \textbf{Jos{\'e} Lu{\'\i}s
da Silva}\\
Faculdade de Ci{\^e}ncias Exatas e da Engenharia,\\
CIMA, Universidade da Madeira,\\
Campus Universit{\'a}rio da Penteada,\\
9020-105 Funchal, Portugal\\
 Email: joses@staff.uma.pt}
\date{\today}
\maketitle
\begin{abstract}
In this paper, we use a biorthogonal approach (Appell system) to construct
and characterize the spaces of test and generalized functions associated
to the fractional Poisson measure $\pi_{\lambda,\beta}$, that is,
a probability measure in the set of natural (or real) numbers. The
Hilbert space $L^{2}(\pi_{\lambda,\beta})$ of complex-valued functions
plays a central role in the construction, namely, the test function
spaces $(N)_{\pi_{\lambda,\beta}}^{\kappa}$, $\kappa\in[0,1]$ is
densely embedded in $L^{2}(\pi_{\lambda,\beta})$. Moreover, $L^{2}(\pi_{\lambda,\beta})$
is also dense in the dual $((N)_{\pi_{\lambda,\beta}}^{\kappa})'=(N)_{\pi_{\lambda,\beta}}^{-\kappa}$.
Hence, we obtain a chain of densely embeddings $(N)_{\pi_{\lambda,\beta}}^{\kappa}\subset L^{2}(\pi_{\lambda,\beta})\subset(N)_{\pi_{\lambda,\beta}}^{-\kappa}$.
The characterization of these spaces is realized via integral transforms
and chain of spaces of entire functions of different types and order
of growth. Wick calculus extends in a straightforward manner from
Gaussian analysis to the present non-Gaussian framework. Finally,
in Appendix \ref{sec:Appell-Bell-connection} we give an explicit relation between (generalized) Appell
polynomials and Bell polynomials. \\
 \\
 \textbf{Keywords}: Fractional Poisson measure, Appell polynomials,
test functions, generalized functions, Wick product, Bell polynomials.
\end{abstract}
\tableofcontents{}

\section{Introduction}

In 1830, the Poisson process was named after the French mathematician
Simeon Denis Poisson which describes the number of occurrences of
a certain event occurring in a given time period, given the average
number of times the event occurs over that time period. At present,
it is one of the most useful statistical distribution applied in a
wide range of fields, including astronomy, business, finance, medicine
and sports, just to name a few.

In 2000, O. N. Repin and A. I. Saichev \cite{Repin-Saichev00} initiated
the study of fractional Poisson process as a process with long-memory
effect which results from the non-exponential waiting time probability
distribution function. The fractional Poisson process is a natural
generalization of the Poisson process. In this case, a parameter $\beta,$
$0<\beta\leq1$, is introduced in the probability distribution function
of the fractional Poisson process wherein at $\beta=1$, the fractional
Poisson process becomes the Poisson process.

In this work, the starting point is the marginal distribution of the
fractional Poisson process as a measure on $\mathbb{N}_{0}$ or on
$\mathbb{R}$. Moreover, we investigate an abstract fractional Poisson
measure $\pi_{\lambda,\beta}$ which gives rise to a fractional Poisson
process $N_{\lambda,\beta}$ with time dependent rate. The two key
properties of the fractional Poisson measure are its analytic Laplace
transform $l_{\pi_{\lambda,\beta}}$ and to assign positive measure
to a nonempty set of its support, see Remark~\ref{rem:H2} below.
The polynomials associated with the Poisson measure are the well-known
orthogonal system of Charlier polynomials. However, the polynomials
associated with the fractional Poisson measure, a generalization to
the Charlier polynomials, are not orthogonal. Thus a biorthogonal
approach (see \cite{KSWY95,K97,KdSS98,GJRS14}) to the fractional
Poisson measure will be constructed, which involves the system of
Appell polynomials and the dual Appell system of the measure $\pi_{\lambda,\beta}$.
More precisely, the Appell polynomials are generated by the normalized
(or Wick) exponential 
\begin{equation}
\mathrm{e}_{\pi_{\lambda,\beta}}(z;x):=\frac{\mathrm{e}^{zx}}{l_{\pi_{\lambda,\beta}}(z)}=\frac{\mathrm{e}^{zx}}{E_{\beta}(\lambda(e^{z}-1))},\quad z\in\mathbb{C},\;x\in\mathbb{R},\label{eq:Wick-exponential}
\end{equation}
where $E_{\beta}$ is the Mittag-Leffler function, see \eqref{eq:ML-function}
for the definition. Interestingly, these Appell polynomials may be
expressed in terms of the Bell polynomials, see Appendix \ref{subsec:Connection-Appell-Bell}.
These polynomials play a key role in the construction of test function
spaces associated to $\pi_{\lambda,\beta}$. Below we use a modification
of $\mathrm{e}_{\pi_{\lambda,\beta}}(\cdot;x)$ (see \eqref{eq:Wick-expo})
adapted to the present framework, which produce the generalized Appell
polynomials. On the other hand, to construct and describe generalized
functions we need the so-called generalized dual Appell system. The
two systems thus introduced, that is, generalized Appell polynomials
and the generalized dual Appell system are biorthogonal with respect
to $L^{2}(\pi_{\lambda,\beta})$.

The paper is organized as follows. In Section \ref{sec:Fractional-Poisson-Measure},
we introduce the fractional Poisson measure. We show that the fractional
Poisson measure is a mixture of Poisson measures with a certain probability
measure $\nu_{\beta}$ on $\mathbb{R}_{+}$. In Section \ref{sec:Appell-polynomials},
we define and emphasize certain properties of the generalized Appell
polynomials. The generalized dual Appell system in then introduced
in Section \ref{sec:Dual-Appell-System} which forms a biorthogonal
system together with the generalized Appell polynomials, cf.~Proposition
\ref{thm:biorthogonal-property}. The construction of test and generalized
functions associated to $\pi_{\lambda,\beta}$ are presented in Section
\ref{sec:Test-Generalized-Function-Spaces} and in Section \ref{sec:Characterization-Theorems}
their characterization via integral transforms. Finally, in Section
\ref{sec:Wick-Calculus}, we introduce the Wick product (and related
calculus) in the larger space of generalized functions associated
to $\pi_{\lambda,\beta}$, that is, $(N)_{\pi_{\lambda,\beta}}^{-1}$.
In Appendix \ref{sec:Equivalent-norms} and \ref{sec:Appell-Bell-connection},
we show the equivalence of certain norms in the space of entire functions
(Appendix \ref{sec:Equivalent-norms}) and express the generalized
Appell polynomials in terms of Bell polynomials (Appendix \ref{sec:Appell-Bell-connection}).

\section{Fractional Poisson Measure}

\label{sec:Fractional-Poisson-Measure}The Poisson measure (probability
distribution) $\pi_{\lambda}$ on $\mathbb{N}_{0}:=\mathbb{N}\cup\{0\}$
(or $\mathbb{R})$ with rate $\lambda>0$ is defined on the $\sigma$-algebra
of all subsets of $\mathbb{N}$, denoted by $\mathscr{P}(\mathbb{N}_{0})$,
as
\[
\pi_{\lambda}(B):=\sum_{k\in B}\frac{\lambda^{k}}{k!}\mathrm{e}^{-\lambda}=\sum_{k=0}^{\infty}\frac{\lambda^{k}}{k!}\mathrm{e}^{-\lambda}\delta_{k}(B),\quad B\in\mathscr{P}(\mathbb{N}_{0}),
\]
where $\delta_{k}$ is the Dirac measure at $k\in\mathbb{N}_{0}$.
In particular, for every $k\in\mathbb{N}$ and $B=\{k\}\in\mathscr{P}(\mathbb{N}_{0})$,
we have
\[
\pi_{\lambda}(\{k\})=\frac{\lambda^{k}}{k!}\mathrm{e}^{-\lambda}.
\]
It is easy to obtain the Laplace transform $l_{\pi_{\lambda}}$ of
the measure $\pi_{\lambda}$, namely, for any $s\in\mathbb{R}$, we
have
\[
l_{\pi_{\lambda}}(s):=\int_{\mathbb{R}}\mathrm{e}^{sx}\,\mathrm{d}\pi_{\lambda}(x)=\mathrm{e}^{-\lambda}\sum_{k=0}^{\infty}\mathrm{e}^{sk}\frac{\lambda^{k}}{k!}=\exp\left(\lambda(\mathrm{e}^{s}-1)\right).
\]

\begin{rem}
The measure $\pi_{\lambda t}$, $t>0$, corresponds to the marginal
distribution of a standard Poisson process $N_{\lambda}=(N_{\lambda}(t))_{t\ge0}$
with parameter $\lambda t>0$ defined on a probability space $(\Omega,\mathcal{F},P)$.
More precisely, we have
\[
\pi_{\lambda t}(\{k\})=P(N_{\lambda}(t)=k)=\frac{(\lambda t)^{k}}{k!}\mathrm{e}^{-\lambda t},\quad k\in\mathbb{N}_{0}.
\]
The number $\pi_{\lambda t}(\{k\})$ is the probability that $k$
events occur in the time interval of length $t$.
\end{rem}

The Laplace transform $l_{\pi_{\lambda}}$ of $\pi_{\lambda}$ may
be extended to complex arguments $z\in\mathbb{C}$ and we obtain 
\begin{equation}
l_{\pi_{\lambda}}(z)=\int_{\mathbb{R}}\mathrm{e}^{zx}\,\mathrm{d}\pi_{\lambda}(x)=\mathrm{e}^{-\lambda}\sum_{k=0}^{\infty}\mathrm{e}^{zk}\frac{\lambda^{k}}{k!}=\exp\left(\lambda(\mathrm{e}^{z}-1)\right).\label{eq:LT-Pm}
\end{equation}

Now we would like to introduce the fractional Poisson measure (fPm).
At first we introduce the Mittag-Leffler function $E_{\beta}$ of
parameter $\beta\in(0,1]$. The Mittag-Leffler function is an entire
function defined on the complex plane by the power series 
\begin{equation}
E_{\beta}(z):=\sum_{n=0}^{\infty}\frac{z^{n}}{\Gamma(\beta n+1)},\quad z\in\mathbb{C}.\label{eq:ML-function}
\end{equation}
The Mittag-Leffler function plays the same role for the fPm as the
exponential function plays for Poisson measure.

For $0<\beta\le1$, the fractional Poisson measure $\pi_{\lambda,\beta}$
on $\mathbb{N}_{0}$ (or $\mathbb{R})$ with rate $\lambda>0$ is
defined for any $B\in\mathscr{P}(\mathbb{N}_{0})$ by
\[
\pi_{\lambda,\beta}(B):=\sum_{k\in B}\frac{\lambda^{k}}{k!}E_{\beta}^{(k)}(-\lambda)=\sum_{k=0}^{\infty}\frac{\lambda^{k}}{k!}E_{\beta}^{(k)}(-\lambda)\delta_{k}(B),
\]
where $E_{\beta}^{(k)}(z):=\frac{d^{k}}{dz^{k}}E_{\beta}(z)$ is the
$k$th derivative of the Mittag-Leffler $E_{\beta}$ function. In
particular, if $B=\{k\}\in\mathscr{P}(\mathbb{N}_{0})$, $k\in\mathbb{N}_{0}$,
we obtain
\[
\pi_{\lambda,\beta}(\{k\}):=\frac{\lambda^{k}}{k!}E_{\beta}^{(k)}(-\lambda).
\]
The Laplace transform of the measure $\pi_{\lambda,\beta}$ is given
for any $z\in\mathbb{C}$ by 
\begin{equation}
l_{\pi_{\lambda,\beta}}(z)=\int_{\mathbb{R}}\mathrm{e}^{zx}\,\mathrm{d}\pi_{\lambda,\beta}(x)=\sum_{k=0}^{\infty}\frac{\big(\mathrm{e}^{z}\lambda\big)^{k}}{k!}E_{\beta}^{(k)}\big(-\lambda\big)=E_{\beta}\big(\lambda(\mathrm{e}^{z}-1)\big).\label{eq:LT-fPm}
\end{equation}

\begin{rem}
The measure $\pi_{\lambda t^{\beta},\beta}$ corresponds to the marginal
distribution of the fractional Poisson process $N_{\lambda,\beta}=(N_{\lambda,\beta}(t))_{t\ge0}$
with rate $\lambda t^{\beta}>0$ defined on a probability space $(\Omega,\mathcal{F},P)$.
Thus, we obtain
\[
\pi_{\lambda t^{\beta}}(\{k\})=P(N_{\lambda,\beta}(t)=k)=\frac{(\lambda t^{\beta})^{k}}{k!}E_{\beta}^{(k)}(-\lambda t^{\beta}),\quad k\in\mathbb{N}_{0}.
\]
\end{rem}

\begin{rem}
The fractional Poisson process $N_{\lambda,\beta}$ was proposed by
O.\ N.\ Repin and A.\ I.\ Saichev \cite{Repin-Saichev00}. Since
then, it was studied and applied by many authors, see N.\ Laskin
\cite{L03}, F.\ Mainardi et al.\ \cite{MGS04,Mainardi-Gorenflo-Vivoli-05,Gorenflo2015},
V.\ V.\ Uchaikin et al.\ \cite{Uchaikin2008}, L.\ Beghin and
E.\ Orsingher \cite{Beghin:2009fi} M.\ Politi et al.\ \cite{Politi-Kaizoji-2011},
M.\ M.\ Meerschaert et al.\ \cite{Meerschaert2011}, R.\ Biard
and B.\ J.\ Saussereau \cite{Biard2014} and references therein.
\end{rem}

An interesting property of the fPm $\pi_{\lambda,\beta}$ is given
by a relation with the Poisson measure $\pi_{\lambda}$, namely, $\pi_{\lambda,\beta}$
is a mixture of Poisson measures with respect to a probability measure
$\nu_{\beta}$ on $\mathbb{R}_{+}:=[0,\infty)$. That probability
measure $\nu_{\beta}$ is absolutely continuous with respect to the
Lebesgue measure on $\mathbb{R}_{+}$ with a probability density $M_{\beta}$.
The Laplace transform of the density $M_{\beta}$ is given by, (see
\cite[Eq.~4.10]{Mainardi_Mura_Pagnini_2010} or \cite[Cor.~A.5]{GJRS14})
\[
\int_{0}^{\infty}\mathrm{e}^{-z\tau}\,\mathrm{d}\nu_{\beta}(\tau)=\int_{0}^{\infty}\mathrm{e}^{-z\tau}M_{\beta}(\tau)\,\mathrm{d}\tau=E_{\beta}(-z),\quad\forall z\in\mathbb{C}.
\]
We have the following lemma which gives the fPm as a mixture of Poisson
measures with different rates.
\begin{lem}
For $0<\beta\leq1$, the fPm $\pi_{\lambda,\beta}$ is an integral
(or mixture) of Poisson measure $\pi_{\lambda}$ with respect to the
probability measure $\nu_{\beta}$, i.e.,
\begin{equation}
\pi_{\lambda,\beta}=\int_{0}^{\infty}\pi_{\lambda\tau}\,\mathrm{d\nu_{\beta}(\tau),}\quad\forall\lambda>0.\label{eq:fPm-mixture}
\end{equation}
\end{lem}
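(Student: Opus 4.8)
The plan is to verify \eqref{eq:fPm-mixture} by testing both sides against singletons. Both sides are finite positive measures supported on the countable set $\mathbb{N}_{0}$, and $\mathscr{P}(\mathbb{N}_{0})$ is generated by the singletons, so it suffices to check that the two measures assign the same mass to each $\{k\}$, $k\in\mathbb{N}_{0}$. On the left, $\pi_{\lambda,\beta}(\{k\})=\frac{\lambda^{k}}{k!}E_{\beta}^{(k)}(-\lambda)$ by definition. On the right, writing $\mu:=\int_{0}^{\infty}\pi_{\lambda\tau}\,\mathrm{d}\nu_{\beta}(\tau)$, Tonelli's theorem (the integrand is nonnegative) gives
\[
\mu(\{k\})=\int_{0}^{\infty}\pi_{\lambda\tau}(\{k\})\,\mathrm{d}\nu_{\beta}(\tau)=\frac{\lambda^{k}}{k!}\int_{0}^{\infty}\tau^{k}\mathrm{e}^{-\lambda\tau}M_{\beta}(\tau)\,\mathrm{d}\tau,
\]
so everything reduces to the moment identity $\int_{0}^{\infty}\tau^{k}\mathrm{e}^{-\lambda\tau}M_{\beta}(\tau)\,\mathrm{d}\tau=E_{\beta}^{(k)}(-\lambda)$.

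Next I would obtain this identity from the Laplace transform relation $\int_{0}^{\infty}\mathrm{e}^{-z\tau}M_{\beta}(\tau)\,\mathrm{d}\tau=E_{\beta}(-z)$ recalled above, by differentiating $k$ times in $z$ under the integral sign: this yields $\int_{0}^{\infty}(-\tau)^{k}\mathrm{e}^{-z\tau}M_{\beta}(\tau)\,\mathrm{d}\tau=\frac{\mathrm{d}^{k}}{\mathrm{d}z^{k}}E_{\beta}(-z)=(-1)^{k}E_{\beta}^{(k)}(-z)$, and setting $z=\lambda>0$ and cancelling $(-1)^{k}$ gives the claim. Combined with the previous display, $\mu(\{k\})=\frac{\lambda^{k}}{k!}E_{\beta}^{(k)}(-\lambda)=\pi_{\lambda,\beta}(\{k\})$ for every $k$, which closes the argument; in particular, $k=0$ together with $E_\beta(0)=1$ re-confirms that $\mu$ is a probability measure.

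Essentially equivalently, one may compare Laplace transforms directly: by Tonelli/Fubini and \eqref{eq:LT-Pm}, $\int_{\mathbb{R}}\mathrm{e}^{sx}\,\mathrm{d}\mu(x)=\int_{0}^{\infty}l_{\pi_{\lambda\tau}}(s)\,\mathrm{d}\nu_{\beta}(\tau)=\int_{0}^{\infty}\mathrm{e}^{\lambda\tau(\mathrm{e}^{s}-1)}\,\mathrm{d}\nu_{\beta}(\tau)=E_{\beta}\big(\lambda(\mathrm{e}^{s}-1)\big)$, which is exactly $l_{\pi_{\lambda,\beta}}(s)$ from \eqref{eq:LT-fPm}; since a finite measure on $\mathbb{N}_{0}$ is determined by the values of its Laplace transform on an interval (here even on all of $\mathbb{R}$, as $l_{\pi_{\lambda,\beta}}$ is entire), the two measures agree.

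The only point requiring genuine care, and the step I expect to be the main obstacle, is justifying the interchange of differentiation (or, in the Laplace-transform route, of integration/summation) with the integral against $\nu_{\beta}$. This is handled by a routine dominated-convergence argument using the known fast decay of the M-Wright/Mittag-Leffler density $M_{\beta}$: for fixed $\lambda>0$ and $z$ in a neighbourhood of $\lambda$ (resp.\ $s$ in a bounded interval), the functions $\tau\mapsto\tau^{k}\mathrm{e}^{-z\tau}M_{\beta}(\tau)$ are dominated by a fixed $\nu_{\beta}$-integrable function because $M_{\beta}(\tau)$ decays faster than any exponential as $\tau\to\infty$. For $\beta=1$ one has $\nu_{1}=\delta_{1}$ and the statement is immediate. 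With this domination in hand, all the interchanges above are legitimate and the proof is complete.
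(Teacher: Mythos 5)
Your proposal is correct, and its second paragraph is in fact exactly the paper's own proof: the authors compute the Laplace transform of $\mu:=\int_{0}^{\infty}\pi_{\lambda\tau}\,\mathrm{d}\nu_{\beta}(\tau)$ via Fubini, identify it with $E_{\beta}\big(\lambda(\mathrm{e}^{z}-1)\big)=l_{\pi_{\lambda,\beta}}(z)$, and conclude by uniqueness of the Laplace transform. Your primary route --- testing both measures on singletons and reducing to the moment identity $\int_{0}^{\infty}\tau^{k}\mathrm{e}^{-\lambda\tau}M_{\beta}(\tau)\,\mathrm{d}\tau=E_{\beta}^{(k)}(-\lambda)$ --- is a genuinely more elementary variant: since both sides are measures concentrated on the countable set $\mathbb{N}_{0}$, agreement on singletons suffices, and you never need to invoke injectivity of the Laplace transform as a black box. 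The price is that you must justify differentiating $k$ times under the integral sign, which you correctly identify as the only delicate step; this is indeed harmless here because $\int_{0}^{\infty}\mathrm{e}^{-z\tau}M_{\beta}(\tau)\,\mathrm{d}\tau=E_{\beta}(-z)$ holds for \emph{all} $z\in\mathbb{C}$, so $M_{\beta}$ decays faster than any exponential and the standard domination argument applies (and $\beta=1$, where $\nu_{1}=\delta_{1}$, is trivial). Either route is complete; yours additionally makes explicit the pleasant identity $\pi_{\lambda,\beta}(\{k\})=\frac{\lambda^{k}}{k!}\int_{0}^{\infty}\tau^{k}\mathrm{e}^{-\lambda\tau}M_{\beta}(\tau)\,\mathrm{d}\tau$, which the paper's transform-level argument leaves implicit.
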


\begin{proof}
Denote the right hand side of \eqref{eq:fPm-mixture} by $\mathrm{\mu}:=\int_{0}^{\infty}\mathrm{\pi}_{\lambda\tau}M_{\beta}(\tau)\mathrm{d}\tau$.
We compute the Laplace transform of $\mu$ and use Fubini's theorem
to obtain 
\begin{align*}
\int_{0}^{\infty}\mathrm{e}^{zx}\,\mathrm{d}\mu(x) & =\int_{0}^{\infty}\mathrm{e}^{zx}\int_{0}^{\infty}\mathrm{d\pi}_{\lambda\tau}(x)M_{\beta}(\tau)\mathrm{d}\tau\\
 & =\int_{0}^{\infty}\left(\int_{0}^{\infty}\mathrm{e}^{zx}\mathrm{d\pi}_{\lambda\tau}(x)\right)M_{\beta}(\tau)\mathrm{d}\tau\\
 & =\int_{0}^{\infty}\mathrm{e}^{\tau\lambda(\mathrm{e}^{z}-1)}M_{\beta}(\tau)\mathrm{d}\tau\\
 & =E_{\beta}(\lambda(\mathrm{e}^{z}-1)).
\end{align*}
Thus, we conclude that both the Laplace transforms of $\mu$ and $\pi_{\lambda,\beta}$
(cf.~\eqref{eq:LT-fPm}) coincides. The result follows by the uniqueness
of the Laplace transform.
\end{proof}
\begin{thm}[Moments of $\pi_{\lambda,\beta},$ cf.~\cite{L09}]
\label{thm:fPmm}The fPm $\pi_{\lambda,\beta}$ has moments of all
order. More precisely, the $n$th moment of the measure $\pi_{\lambda,\beta}$
is given by 
\begin{equation}
M_{\lambda,\beta}(n):=\int_{\mathbb{R}}x^{n}\,\mathrm{d}\pi_{\lambda,\beta}(x)=\sum_{m=0}^{n}\frac{m!}{\Gamma(m\beta+1)}S(n,m)\lambda^{m},\label{cnn}
\end{equation}
where $S(n,m)$ is the Stirling number of the second kind, see Definition
\ref{def:Stirling-numbers-second-kind}, \vpageref{def:Stirling-numbers-second-kind}.
\end{thm}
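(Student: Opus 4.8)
The plan is to use the mixture representation \eqref{eq:fPm-mixture} together with the classical formula for the moments of the Poisson measure. First I would record that for rate $\mu>0$ the $n$th Poisson moment is the Touchard (Bell) polynomial,
\[
\int_{\mathbb{R}}x^{n}\,\mathrm{d}\pi_{\mu}(x)=\sum_{m=0}^{n}S(n,m)\mu^{m}.
\]
This is obtained by comparing the Taylor expansion at $z=0$ of $l_{\pi_{\mu}}(z)=\exp(\mu(\mathrm{e}^{z}-1))$, which equals $\sum_{n\ge0}\frac{z^{n}}{n!}\int x^{n}\,\mathrm{d}\pi_{\mu}$, with the exponential generating function $\sum_{n\ge0}\frac{z^{n}}{n!}\sum_{m=0}^{n}S(n,m)\mu^{m}$ of the Bell polynomials; equivalently one applies the Fa\`a di Bruno formula to $z\mapsto\exp(\mu(\mathrm{e}^{z}-1))$.

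Next I would compute the moments of the mixing measure $\nu_{\beta}$. Since $\nu_{\beta}$ is supported on $\mathbb{R}_{+}$ and $E_{\beta}$ is entire, the identity $\int_{0}^{\infty}\mathrm{e}^{-z\tau}\,\mathrm{d}\nu_{\beta}(\tau)=E_{\beta}(-z)$ may be differentiated under the integral sign at $z=0$, or its two sides expanded as power series and matched coefficientwise; either way,
\[
\int_{0}^{\infty}\tau^{m}\,\mathrm{d}\nu_{\beta}(\tau)=\frac{m!}{\Gamma(\beta m+1)},\qquad m\in\mathbb{N}_{0},
\]
so in particular $\nu_{\beta}$ has finite moments of every order.

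Combining the two ingredients via \eqref{eq:fPm-mixture}, and using Tonelli's theorem (legitimate because $\pi_{\lambda\tau}$ and $\nu_{\beta}$ are supported on $[0,\infty)$, so every integrand below is nonnegative), I obtain
\[
M_{\lambda,\beta}(n)=\int_{0}^{\infty}\!\!\Big(\int_{\mathbb{R}}x^{n}\,\mathrm{d}\pi_{\lambda\tau}(x)\Big)\,\mathrm{d}\nu_{\beta}(\tau)=\int_{0}^{\infty}\sum_{m=0}^{n}S(n,m)(\lambda\tau)^{m}\,\mathrm{d}\nu_{\beta}(\tau)=\sum_{m=0}^{n}\frac{m!}{\Gamma(\beta m+1)}S(n,m)\lambda^{m},
\]
which is exactly \eqref{cnn}; since the right-hand side is a finite sum, it also follows at once that $\pi_{\lambda,\beta}$ possesses moments of all orders.

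The only point requiring care --- and it is minor here --- is the justification of the interchanges of sum and integral: for the Poisson moments and for the moments of $\nu_{\beta}$ one needs absolute convergence of the relevant power series on a neighbourhood of $0$, which is automatic since $\exp$ and $E_{\beta}$ are entire, and for the mixture step the inner integral is the polynomial $\sum_{m=0}^{n}S(n,m)(\lambda\tau)^{m}$ in $\tau$, so Tonelli applies directly once the moments of $\nu_{\beta}$ are known to be finite. A fully equivalent alternative avoids the mixture altogether: since $l_{\pi_{\lambda,\beta}}$ is analytic near $0$ one has $M_{\lambda,\beta}(n)=\frac{\mathrm{d}^{n}}{\mathrm{d}z^{n}}E_{\beta}(\lambda(\mathrm{e}^{z}-1))\big|_{z=0}$, and applying Fa\`a di Bruno with inner function $g(z)=\lambda(\mathrm{e}^{z}-1)$ (so $g(0)=0$ and $g^{(i)}(0)=\lambda$ for all $i\ge1$), together with $E_{\beta}^{(m)}(0)=\frac{m!}{\Gamma(\beta m+1)}$ and the homogeneity relation $B_{n,m}(\lambda,\dots,\lambda)=\lambda^{m}S(n,m)$ for the partial Bell polynomials, reproduces \eqref{cnn}.
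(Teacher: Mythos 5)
Your argument is correct, and it is worth noting that the paper itself offers no proof of this theorem at all --- it is stated with a pointer to the literature (Laskin's paper), so any comparison is with that reference rather than with an in-text argument. What you have done is assemble a self-contained proof from ingredients the paper already provides: the mixture representation \eqref{eq:fPm-mixture}, the classical Touchard formula $\int_{\mathbb{R}}x^{n}\,\mathrm{d}\pi_{\mu}(x)=\sum_{m=0}^{n}S(n,m)\mu^{m}$ for Poisson moments, and the identity $\int_{0}^{\infty}\tau^{m}\,\mathrm{d}\nu_{\beta}(\tau)=m!/\Gamma(\beta m+1)$ read off from the Laplace transform of $\nu_{\beta}$ (legitimate here precisely because that transform is asserted to be entire, so the series manipulation at $z=0$ is harmless). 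The Tonelli step is correctly justified by the nonnegativity of $x^{n}$ on the support $\mathbb{N}_{0}$, and the finiteness of all moments falls out of the finite sum on the right of \eqref{cnn}. Your alternative route via Fa\`a di Bruno applied to $E_{\beta}(\lambda(\mathrm{e}^{z}-1))$, using $E_{\beta}^{(m)}(0)=m!/\Gamma(\beta m+1)$ and $B_{n,m}(\lambda,\dots,\lambda)=\lambda^{m}S(n,m)$, is also correct and in fact dovetails with the Bell-polynomial machinery the authors develop in Appendix \ref{sec:Appell-Bell-connection}; it has the minor advantage of not requiring the mixture lemma, at the cost of having to justify that the $n$th moment equals the $n$th derivative of the Laplace transform at the origin (which is fine since the measure is supported on $[0,\infty)$ and the transform is entire). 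Either version would serve as a complete proof where the paper currently has only a citation.
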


Here are the first few moments of the measure $\pi_{\lambda,\beta}$,
\begin{center}
$\begin{array}{rcl}
M_{\lambda,\beta}(0) & = & 1,\\
M_{\lambda,\beta}(1) & = & {\displaystyle \frac{\lambda}{\Gamma(\beta+1)}},\\
M_{\lambda,\beta}(2) & = & {\displaystyle \frac{\lambda}{\Gamma(\beta+1)}+\frac{2\lambda^{2}}{\Gamma(2\beta+1)},}\\
M_{\lambda,\beta}(3) & = & {\displaystyle \frac{\lambda}{\Gamma(\beta+1)}+\frac{6\lambda^{2}}{\Gamma(2\beta+1)}+\frac{6\lambda^{3}}{\Gamma(3\beta+1)},}\\
M_{\lambda,\beta}(4) & = & {\displaystyle \frac{\lambda}{\Gamma(\beta+1)}+\frac{14\lambda^{2}}{\Gamma(2\beta+1)}+\frac{36\lambda^{3}}{\Gamma(3\beta+1)}+\frac{24\lambda^{4}}{\Gamma(4\beta+1)}.}
\end{array}$
\par\end{center}

\begin{flushleft}
When $\beta=1$, these moments become the moments of the Poisson measure
$\pi_{\lambda}$:
\par\end{flushleft}

\begin{center}
$\begin{array}{rcl}
M_{\lambda,1}(0) & = & 1,\\
M_{\lambda,1}(1) & = & \lambda,\\
M_{\lambda,1}(2) & = & \lambda+\lambda^{2},\\
M_{\lambda,1}(3) & = & \lambda+3\lambda^{2}+\lambda^{3},\\
M_{\lambda,1}(4) & = & \lambda+7\lambda^{2}+6\lambda^{3}+\lambda^{4}.
\end{array}$
\par\end{center}

\section{Generalized Appell Polynomials}

\label{sec:Appell-polynomials}In this section we introduce the system
of generalized Appell polynomials associated with the fPm $\pi_{\lambda,\beta}$
in $\mathbb{R}$. Our first concern is the analytic property of the
Laplace transform given in \eqref{eq:LT-fPm}, that is, 
\[
l_{\pi_{\lambda,\beta}}(z)=E_{\beta}(\lambda(\mathrm{e}^{z}-1)),\quad z\in\mathbb{C}.
\]
In fact, $l_{\pi_{\lambda,\beta}}(\cdot)$ is the composition of two
entire functions, thus it is entire.

The analytic property of the function $l_{\pi_{\lambda,\beta}}(\cdot)$
is equivalent and characterized by the following proposition.
\begin{prop}
\label{prop:Analytic-LT-fPm}Let $\pi_{\lambda,\beta}$ be the fPm
in $\mathbb{R}$. The following statements are equivalent:
\begin{enumerate}
\item $\exists C,K>0$ such that $\forall n\in\mathbb{N}_{0}$, $|\int_{\mathbb{R}}x^{n}\,\mathrm{d}\pi_{\lambda,\beta}(x)|<n!C^{n}K,$
\item $\exists\varepsilon>0$, such that $\int_{\mathbb{R}}\mathrm{e}^{\varepsilon|x|}\,\mathrm{d}\pi_{\lambda,\beta}(x)<\infty.$
\end{enumerate}
\end{prop}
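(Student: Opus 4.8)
The plan is to prove the two implications $(2)\Rightarrow(1)$ and $(1)\Rightarrow(2)$ by elementary estimates relating the moments $\int_{\mathbb{R}}x^{n}\,\mathrm{d}\pi_{\lambda,\beta}(x)$ to the exponential integral $\int_{\mathbb{R}}\mathrm{e}^{\varepsilon|x|}\,\mathrm{d}\pi_{\lambda,\beta}(x)$, using the Taylor expansion of the exponential term by term. Since $\pi_{\lambda,\beta}$ is supported on $\mathbb{N}_{0}\subset[0,\infty)$, we have $|x|=x$ on the support, which simplifies all the manipulations; indeed $\int_{\mathbb{R}}\mathrm{e}^{\varepsilon|x|}\,\mathrm{d}\pi_{\lambda,\beta}(x)=l_{\pi_{\lambda,\beta}}(\varepsilon)=E_{\beta}(\lambda(\mathrm{e}^{\varepsilon}-1))<\infty$ for every $\varepsilon>0$ by \eqref{eq:LT-fPm}, so in fact statement $(2)$ holds unconditionally here — but the proposition is phrased as an equivalence of abstract properties, so I will give the argument that works for a general measure and only invoke support/positivity where genuinely needed.

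For $(2)\Rightarrow(1)$: assume $\int_{\mathbb{R}}\mathrm{e}^{\varepsilon|x|}\,\mathrm{d}\pi_{\lambda,\beta}(x)=:K_{\varepsilon}<\infty$ for some $\varepsilon>0$. Expanding $\mathrm{e}^{\varepsilon|x|}=\sum_{n=0}^{\infty}\varepsilon^{n}|x|^{n}/n!$ and using monotone convergence, every term is bounded by the sum, i.e. $\frac{\varepsilon^{n}}{n!}\int_{\mathbb{R}}|x|^{n}\,\mathrm{d}\pi_{\lambda,\beta}(x)\le K_{\varepsilon}$ for all $n\in\mathbb{N}_{0}$. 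Hence $|\int_{\mathbb{R}}x^{n}\,\mathrm{d}\pi_{\lambda,\beta}(x)|\le\int_{\mathbb{R}}|x|^{n}\,\mathrm{d}\pi_{\lambda,\beta}(x)\le n!\,\varepsilon^{-n}K_{\varepsilon}$, which is (1) with $C=\varepsilon^{-1}$ and $K=K_{\varepsilon}$.

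For $(1)\Rightarrow(2)$: assume $|\int_{\mathbb{R}}x^{n}\,\mathrm{d}\pi_{\lambda,\beta}(x)|<n!C^{n}K$ for all $n$. Here I use that the measure is carried by $[0,\infty)$, so $\int_{\mathbb{R}}|x|^{n}\,\mathrm{d}\pi_{\lambda,\beta}(x)=\int_{\mathbb{R}}x^{n}\,\mathrm{d}\pi_{\lambda,\beta}(x)$ and the absolute-value bound in (1) is in fact a bound on the $n$th absolute moment. Then for any $\varepsilon$ with $0<\varepsilon<1/C$, Tonelli's theorem gives
\begin{equation}
\int_{\mathbb{R}}\mathrm{e}^{\varepsilon|x|}\,\mathrm{d}\pi_{\lambda,\beta}(x)=\sum_{n=0}^{\infty}\frac{\varepsilon^{n}}{n!}\int_{\mathbb{R}}|x|^{n}\,\mathrm{d}\pi_{\lambda,\beta}(x)\le K\sum_{n=0}^{\infty}(\varepsilon C)^{n}=\frac{K}{1-\varepsilon C}<\infty,
\end{equation}
which is (2). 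The only subtlety — and the step I would flag as needing a word of care — is the passage from the absolute-value bound on the signed moments in (1) to a bound on the absolute moments in the direction $(1)\Rightarrow(2)$; for a general measure this requires that the support lie in a half-line (or a two-sided version of (1)), and here it is guaranteed since $\mathrm{supp}(\pi_{\lambda,\beta})\subseteq\mathbb{N}_{0}$. Everything else is routine interchange of sum and integral justified by nonnegativity of the integrand.
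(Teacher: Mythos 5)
Your proof is correct, and the direction $(2)\Rightarrow(1)$ is essentially identical to the paper's. The direction $(1)\Rightarrow(2)$, however, takes a genuinely different route. You pass from the signed-moment bound in $(1)$ to an absolute-moment bound by invoking the fact that $\pi_{\lambda,\beta}$ is carried by $\mathbb{N}_{0}\subset[0,\infty)$, so that $\int_{\mathbb{R}}|x|^{n}\,\mathrm{d}\pi_{\lambda,\beta}(x)=\int_{\mathbb{R}}x^{n}\,\mathrm{d}\pi_{\lambda,\beta}(x)$, and then a single geometric series finishes the argument for $\varepsilon<1/C$. The paper instead avoids any appeal to the support: it bounds $\int|x|^{n}\,\mathrm{d}\pi_{\lambda,\beta}$ by $\bigl(\int x^{2n}\,\mathrm{d}\pi_{\lambda,\beta}\bigr)^{1/2}$ via Cauchy--Schwarz (the even moments being automatically nonnegative, so the absolute-value bound in $(1)$ applies to them directly), then uses $(2n)!\le 2^{2n}(n!)^{2}$ to land on the series $\sum_{n}(2C\varepsilon)^{n}$, convergent for $\varepsilon<1/(2C)$. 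Your version is shorter and more transparent but, as you yourself flag, relies on the half-line support; the paper's version buys generality, proving the equivalence for an arbitrary probability measure on $\mathbb{R}$ with a constant that is worse by a factor of $2$ in the admissible range of $\varepsilon$. Your observation that $(2)$ in fact holds unconditionally for $\pi_{\lambda,\beta}$, since $\int_{\mathbb{R}}\mathrm{e}^{\varepsilon|x|}\,\mathrm{d}\pi_{\lambda,\beta}(x)=E_{\beta}(\lambda(\mathrm{e}^{\varepsilon}-1))<\infty$ by the entirety of the Mittag-Leffler function, is accurate and worth keeping as a remark, since it explains why the proposition is really a statement about abstract properties of a measure rather than something that needs checking for this particular one.
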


\begin{proof}
\begin{description}
\item [{1.~$\Rightarrow$~2.}] Let $\pi_{\lambda,\beta}$ be the fPm
in $\mathbb{R}$. The Taylor expansion of $l_{\pi_{\lambda,\beta}}(\cdot)$
is given by 
\[
l_{\pi_{\lambda,\beta}}(z)=\sum_{n=0}^{\infty}\frac{z^{n}}{n!}\left[\frac{\mathrm{d}^{n}}{\mathrm{d}z^{n}}l_{\pi_{\lambda,\beta}}(z)\right]_{z=0},\quad z\in\mathbb{C}.
\]
Then by definition of $l_{\pi_{\lambda,\beta}}(\cdot)$, we have 
\[
l_{\pi_{\lambda,\beta}}(z)=\int_{\mathbb{R}}\mathrm{e}^{zx}\,\mathrm{d}\pi_{\lambda,\beta}(x)=\sum_{n=0}^{\infty}\frac{z^{n}}{n!}\int_{\mathbb{R}}x^{n}\,\mathrm{d}\pi_{\lambda,\beta}(x).
\]
Now, note that 
\[
\int_{\mathbb{R}}\mathrm{e}^{\varepsilon|x|}\,\mathrm{d}\pi_{\lambda,\beta}(x)=\sum_{n=0}^{\infty}\frac{\varepsilon^{n}}{n!}\int_{\mathbb{R}}|x|^{n}\,\mathrm{d}\pi_{\lambda,\beta}(x).
\]
By the Cauchy-Schwarz inequality, we have 
\[
\int_{\mathbb{R}}|x^{n}|\,\mathrm{d}\pi_{\lambda,\beta}(x)\leq\left(\int_{\mathbb{R}}x^{2n}\,\mathrm{d}\pi_{\lambda,\beta}(x)\right)^{1/2}
\]
and so 
\[
\int_{\mathbb{R}}\mathrm{e}^{\varepsilon|x|}\,\mathrm{d}\pi_{\lambda,\beta}(x)\leq\sum_{n=0}^{\infty}\frac{\varepsilon^{n}}{n!}\left(\int_{\mathbb{R}}x^{2n}\,\mathrm{d}\pi_{\lambda,\beta}(x)\right)^{1/2}.
\]
By the hypothesis, we have 
\[
\int_{\mathbb{R}}x^{n}\,\mathrm{d}\pi_{\lambda,\beta}(x)\leq\left|\int_{\mathbb{R}}x^{n}\,\mathrm{d}\pi_{\lambda,\beta}(x)\right|<n!C^{n}K
\]
for all $n\in\mathbb{N}_{0}$, therefore 
\[
\int_{\mathbb{R}}x^{2n}\,\mathrm{d}\pi_{\lambda,\beta}(x)\leq(2n)!C^{2n}K
\]
Since $(2n)!\leq2^{2n}(n!)^{2}$ for all $n\in\mathbb{N}_{0}$, we
have 
\[
\int_{\mathbb{R}}x^{2n}\,\mathrm{d}\pi_{\lambda,\beta}(x)\leq2^{2n}(n!)^{2}C^{2n}K.
\]
Thus, 
\[
\begin{array}{rcl}
{\displaystyle \int_{\mathbb{R}}\mathrm{e}^{\varepsilon|x|}\,\mathrm{d}\pi_{\lambda,\beta}(x)} & \leq & {\displaystyle \sum_{n=0}^{\infty}\frac{\varepsilon^{n}}{n!}\left(\int_{\mathbb{R}}x^{2n}\,\mathrm{d}\pi_{\lambda,\beta}(x)\right)^{1/2}}\\
 & \leq & {\displaystyle \sum_{n=0}^{\infty}\frac{\varepsilon^{n}}{n!}2^{n}(n!)C^{n}\sqrt{K}}\\
 & = & \sqrt{K}{\displaystyle \sum_{n=0}^{\infty}(2C\varepsilon)^{n}}
\end{array}
\]
which is finite provided $\varepsilon$ is chosen such that $2C\varepsilon<1.$
\item [{2.~$\Rightarrow$~1.}] Suppose that there exists $\varepsilon>0$
and $K_{\varepsilon}>0$ such that 
\[
\int_{\mathbb{R}}\mathrm{e}^{\varepsilon|x|}\,\mathrm{d}\pi_{\lambda,\beta}(x)=K_{\varepsilon}.
\]
Then 
\[
\sum_{n=0}^{\infty}\frac{\varepsilon^{n}}{n!}\int_{\mathbb{R}}|x|^{n}\,\mathrm{d}\pi_{\lambda,\beta}(x)=K_{\varepsilon}.
\]
This implies that each term in this series is less than or equal to
$K_{\varepsilon}$, that is 
\[
\frac{\varepsilon^{n}}{n!}\int_{\mathbb{R}}|x|^{n}\,\mathrm{d}\pi_{\lambda,\beta}(x)\leq K_{\varepsilon}.
\]
Hence, we have 
\[
\left|\int_{\mathbb{R}}x^{n}\,\mathrm{d}\pi_{\lambda,\beta}(x)\right|\leq\int_{\mathbb{R}}|x|^{n}\,\mathrm{d}\pi_{\lambda,\beta}(x)\leq n!\left(\frac{1}{\varepsilon^{n}}\right)K_{\varepsilon}.
\]
Now, take $C=\frac{1}{\varepsilon}$ and $K=K_{\varepsilon}$ and
we are done.\hfill{}$\qedhere$
\end{description}
\end{proof}
Now we emphasize the condition on the measure $\pi_{\lambda,\beta}$
which guarantees the embedding of test function spaces (to be introduced
in Section~\ref{sec:Test-Generalized-Function-Spaces}) in $L^{2}(\pi_{\lambda,\beta})$,
that is, the space of complex-valued measurable functions whose modulus
square is integrable with respect to $\pi_{\lambda,\beta}$, see \cite{KaKo99}.
\begin{rem}
\label{rem:H2} For every nonempty open set $O\subset\mathbb{R}$
such that $\mathbb{N}\cap O\neq\varnothing$ we have $\pi_{\lambda,\beta}(O)>0.$
\end{rem}

We define the following entire function $f$ on a neighborhood of
$0\in\mathbb{C}$ by
\[
f(z):=\log(1+z).
\]
Recall from \eqref{eq:Wick-exponential}, the \emph{Wick exponential}
with respect to the measure $\pi_{\lambda,\beta}$ defined by 
\begin{equation}
\mathrm{e}_{\pi_{\lambda,\beta}}(z;\cdot):\mathbb{R}\longrightarrow\mathbb{C},\;x\mapsto\mathrm{e}_{\pi_{\lambda,\beta}}(z;x)={\displaystyle \frac{\mathrm{e}^{xz}}{l_{\pi_{\lambda,\beta}}(z)}}={\displaystyle \frac{\mathrm{e}^{xz}}{E_{\beta}(\lambda(\mathrm{e}^{z}-1))}}.\label{eq:Wick-gen-Appell}
\end{equation}

Since $l_{\pi_{\lambda,\beta}}(0)=1$, there is a neighborhood $V$
of $0\in\mathbb{C}$ where $\mathrm{e}_{\pi_{\lambda,\beta}}(f(\cdot);x)$
is given by 
\begin{equation}
\mathrm{e}_{\pi_{\lambda,\beta}}(f(z);x)=\sum_{n=0}^{\infty}\frac{z^{n}}{n!}C_{n}^{\lambda,\beta}(x),\hspace{0.1in}\forall z\in V,\label{eq:Wick-expo}
\end{equation}
with 
\begin{equation}
C_{n}^{\lambda,\beta}(x)=\frac{\mathrm{d}^{n}}{\mathrm{d}z^{n}}\mathrm{e}_{\pi_{\lambda,\beta}}(f(z);x)\Big|_{z=0}.\label{eq:Appell-Polynomials}
\end{equation}
These functions $C_{n}^{\lambda,\beta}(\cdot)$, $n\in\mathbb{N}$,
are polynomials of degree $n$ which are also known as the \textit{generalized
Appell polynomials}. The set $\{C_{n}^{\lambda,\beta}(\cdot),n\in\mathbb{N}_{0}\}$
is called the \textit{generalized Appell polynomial system} associated
with the fPm $\pi_{\lambda,\beta}$ and we denote it by $\mathbb{P}^{\pi_{\lambda,\beta}}$.
The following are the first four generalized Appell polynomials $C_{n}^{\lambda,\beta}(x)$
associated with the fPm:

$\begin{array}{ccl}
C_{0}^{\lambda,\beta}(x) & = & 1,\\
C_{1}^{\lambda,\beta}(x) & = & {\displaystyle x-\frac{\lambda}{\Gamma(\beta+1)},}\\
C_{2}^{\lambda,\beta}(x) & = & {\displaystyle x^{2}-\left(\frac{2\lambda}{\Gamma(\beta+1)}+1\right)x-\frac{2\lambda^{2}}{\Gamma(2\beta+1)}+2\left(\frac{\lambda}{\Gamma(\beta+1)}\right)}^{2}\\
C_{3}^{\lambda,\beta}(x) & = & {\displaystyle x^{3}-3\left(\frac{\lambda}{\Gamma(\beta+1)}+1\right)x^{2}+\Bigg[6\left(\frac{\lambda}{\Gamma(\beta+1)}\right)^{2}-\frac{6\lambda^{2}}{\Gamma(2\beta+1)}+\frac{3\lambda}{\Gamma(\beta+1)}+2\Bigg]x}\\
 &  & {\displaystyle -\frac{6\lambda^{3}}{\Gamma(3\beta+1)}{\displaystyle +\frac{12\lambda^{3}}{\Gamma(\beta+1)\Gamma(2\beta+1)}}{\displaystyle -6\left(\frac{\lambda}{\Gamma(\beta+1)}\right)^{3}.}}
\end{array}$

At $\beta=1$ these polynomials become the well known Charlier polynomials,
i.e.,
\begin{equation}
\begin{split}C_{0}^{\lambda,1}(x) & =1\\
C_{1}^{\lambda,1}(x) & =x-\lambda\\
C_{2}^{\lambda,1}(x) & =x^{2}-(1+2\lambda)x+\lambda^{2}\\
C_{3}^{\lambda,1}(x) & =x^{3}-(3+3\lambda)x^{2}+(2+3\lambda+3\lambda^{2})x-\lambda^{3}.
\end{split}
\label{eq:Charlier-polynomials}
\end{equation}
Later on, we consider the Taylor series
\begin{equation}
\mathrm{e}_{\pi_{\lambda,\beta}}(z;x)=\frac{\mathrm{e}^{zx}}{l_{\pi_{\lambda,\beta}}(z)}=\sum_{n=0}^{\infty}\frac{z^{n}}{n!}A_{n}^{\lambda,\beta}(x),\label{eq:Wick-Apell}
\end{equation}
for all $z\in W$, a neighborhood of zero in $\mathbb{C}$. The functions
$A_{n}^{\lambda,\beta}(\cdot)$, $n\in\mathbb{N}$, are polynomials
of degree $n$ which are also known as the \textit{Appell polynomials}.
In Appendix \ref{subsec:Connection-Appell-Bell}, we obtain the explicit
form of the Appell polynomials generated by the fPm in terms of the
Bell polynomials, see Theorem \ref{thm:Appell-Bell-relation}.

The next proposition summarizes the most important properties of the
polynomials $C_{n}^{\lambda,\beta}(\cdot)$, $n\in\mathbb{N}_{0}$.
\begin{prop}
\label{prop:gen-Appell-prop} For any $x,y\in\mathbb{R}$, the polynomials
$C_{n}^{\lambda,\beta}(\cdot)$, $n\in\mathbb{N}$, satisfy the following
properties
\begin{description}
\item [{(P1)}] $C_{n}^{\lambda,\beta}(x)={\displaystyle \sum_{m=0}^{n}\frac{A_{n}^{m}}{m!}A_{m}^{\lambda,\beta}(x),}$
where $A_{0}^{0}:=1$ and $A_{n}^{m}:={\displaystyle \!\!\!\!\!\!\sum_{l_{1}+\dots+l_{m}=n}\!\!\!\!\frac{(-1)^{n+m}n!}{l_{1}\dots l_{m}}}$,
$l_{i}\in\left\{ 1,\dots,n\right\} $ for all $i=1,\dots,m.$
\item [{(P2)}] $x^{n}={\displaystyle \sum_{k=0}^{n}\sum_{m=0}^{k}\binom{n}{k}\frac{B_{k}^{m}}{m!}}C_{m}^{\lambda,\beta}(x)M_{\lambda,\beta}(n-k)$,
where $B_{0}^{0}:=1$ and $B_{k}^{m}:=\!\!\!\!{\displaystyle \sum_{l_{1}+\dots+l_{m}=k}\frac{k!}{l_{1}!\dots l_{m}!}}$,
$l_{i}\in\left\{ 1,\dots,k\right\} $ for all $i=1,\dots,m.$
\item [{(P3)}] $C_{n}^{\lambda,\beta}(x+y)=\!\!\!\!{\displaystyle \sum_{k+l+m=n}\frac{n!}{k!l!m!}}C_{k}^{\lambda,\beta}(x)C_{l}^{\lambda,\beta}(y)\tilde{M}_{\lambda,\beta}(m)$,
where $\tilde{M}_{\lambda,\beta}(m):={\displaystyle \frac{m!\lambda^{m}}{\Gamma(m\beta+1)}}$.
\item [{(P4)}] $C_{n}^{\lambda,\beta}(x+y)={\displaystyle \sum_{k=0}^{n}\binom{n}{k}}C_{k}^{\lambda,\beta}(x)(y)_{n-k}$,
where $(y)_{m}$ are the falling factorials, for all $m\in\mathbb{N}_{0}$.
\item [{(P5)}] $\mathbb{E}\big(C_{n}^{\lambda,\beta}(\cdot)\big)=\delta_{n,0}$,
where $\delta_{n,m}$ is the Kronecker delta and $\mathbb{E}(\cdot)$
is the expectation with respect to the measure $\pi_{\lambda,\beta}$.
\item [{(P6)}] For every $\varepsilon>0$, there exists $C_{\varepsilon},\sigma_{\varepsilon}>0$
such that 
\[
|C_{n}^{\lambda,\beta}(x)|\leq C_{\varepsilon}n!\sigma_{\varepsilon}^{-n}\mathrm{e}^{\varepsilon|x|}.
\]
\end{description}
\end{prop}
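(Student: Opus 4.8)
The plan is to read off all six identities from a single closed form of the generating function \eqref{eq:Wick-expo}. Since $f(z)=\log(1+z)$ satisfies $\mathrm{e}^{f(z)}=1+z$, the Wick exponential \eqref{eq:Wick-gen-Appell} evaluated at $f(z)$ collapses to
\[
\mathrm{e}_{\pi_{\lambda,\beta}}(f(z);x)=\frac{(1+z)^{x}}{E_{\beta}(\lambda z)}=\sum_{n=0}^{\infty}\frac{z^{n}}{n!}C_{n}^{\lambda,\beta}(x).
\]
Because $(1+z)^{x}$ is analytic on $|z|<1$ and $E_{\beta}(0)=1\neq0$, this holds on a disc about $0$ whose radius may be chosen independently of $x\in\mathbb{R}$. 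Besides this identity I keep at hand: the expansion $\mathrm{e}_{\pi_{\lambda,\beta}}(w;x)=\sum_{m}\frac{w^{m}}{m!}A_{m}^{\lambda,\beta}(x)$ from \eqref{eq:Wick-Apell}; the moment expansion $l_{\pi_{\lambda,\beta}}(w)=\sum_{j}\frac{w^{j}}{j!}M_{\lambda,\beta}(j)$ from Theorem \ref{thm:fPmm}; and $E_{\beta}(\lambda z)=\sum_{m}\frac{z^{m}}{m!}\tilde{M}_{\lambda,\beta}(m)$, which is the series \eqref{eq:ML-function} rewritten. On a small enough disc all the series below converge absolutely, so the rearrangements and Cauchy products are legitimate and reduce to matching Taylor coefficients.

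For (P1), write $\mathrm{e}_{\pi_{\lambda,\beta}}(f(z);x)=\sum_{m}\frac{A_{m}^{\lambda,\beta}(x)}{m!}(\log(1+z))^{m}$ and expand $(\log(1+z))^{m}=\sum_{n\ge m}\frac{A_{n}^{m}}{n!}z^{n}$ straight from $\log(1+z)=\sum_{l\ge1}\frac{(-1)^{l+1}}{l}z^{l}$, the factor $(-1)^{n+m}$ arising from $\prod_{i}(-1)^{l_{i}+1}$ with $\sum_{i}l_{i}=n$; then compare coefficients of $z^{n}$. For (P3), factor $(1+z)^{x+y}=(1+z)^{x}(1+z)^{y}$ and reinsert one power of $E_{\beta}(\lambda z)$, obtaining
\[
\sum_{n}\frac{z^{n}}{n!}C_{n}^{\lambda,\beta}(x+y)=\Bigl(\sum_{k}\frac{z^{k}}{k!}C_{k}^{\lambda,\beta}(x)\Bigr)\Bigl(\sum_{l}\frac{z^{l}}{l!}C_{l}^{\lambda,\beta}(y)\Bigr)E_{\beta}(\lambda z),
\]
whose triple Cauchy product is exactly (P3). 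For (P4), proceed the same way but expand the extra factor as the binomial series $(1+z)^{y}=\sum_{m}\frac{(y)_{m}}{m!}z^{m}$. For (P2), substitute $z=\mathrm{e}^{w}-1$ into $(1+z)^{x}=E_{\beta}(\lambda z)\sum_{m}\frac{z^{m}}{m!}C_{m}^{\lambda,\beta}(x)$; since $E_{\beta}(\lambda(\mathrm{e}^{w}-1))=l_{\pi_{\lambda,\beta}}(w)$ this becomes $\mathrm{e}^{wx}=l_{\pi_{\lambda,\beta}}(w)\sum_{m}\frac{(\mathrm{e}^{w}-1)^{m}}{m!}C_{m}^{\lambda,\beta}(x)$, and expanding $l_{\pi_{\lambda,\beta}}(w)$ by its moments and $(\mathrm{e}^{w}-1)^{m}=\sum_{k\ge m}\frac{B_{k}^{m}}{k!}w^{k}$ and collecting the coefficient of $w^{n}/n!$ yields (P2).

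For (P6), I would apply a Cauchy estimate to the closed form: on $|z|=\rho$ with $\rho$ small, $|C_{n}^{\lambda,\beta}(x)|\le n!\,\rho^{-n}\max_{|z|=\rho}\frac{|1+z|^{x}}{|E_{\beta}(\lambda z)|}$. For both signs of $x$ one has $|1+z|^{x}\le(1-\rho)^{-|x|}=\mathrm{e}^{-|x|\log(1-\rho)}$; choosing $\rho=\sigma_{\varepsilon}$ small enough that $-\log(1-\rho)\le\varepsilon$ and $|E_{\beta}(\lambda z)|\ge\tfrac12$ on $|z|=\rho$ gives (P6) with $C_{\varepsilon}=2$. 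Finally (P5): the bound (P6) together with Proposition \ref{prop:Analytic-LT-fPm} makes $\sum_{n}\frac{|z|^{n}}{n!}\int_{\mathbb{R}}|C_{n}^{\lambda,\beta}|\,\mathrm{d}\pi_{\lambda,\beta}$ finite for small $|z|$, so integrating the generating-function identity in $x$ term by term and using $\int_{\mathbb{R}}(1+z)^{x}\,\mathrm{d}\pi_{\lambda,\beta}(x)=l_{\pi_{\lambda,\beta}}(\log(1+z))=E_{\beta}(\lambda z)$ by \eqref{eq:LT-fPm} gives $\sum_{n}\frac{z^{n}}{n!}\mathbb{E}\bigl(C_{n}^{\lambda,\beta}\bigr)=1$, hence (P5). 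The genuinely delicate points --- everything else being formal bookkeeping with absolutely convergent series --- are the $x$-uniformity of the radius of analyticity and the two-sided estimate on $|1+z|^{x}$ in (P6), and the interchange of sum and integral in (P5), for which (P6) and Proposition \ref{prop:Analytic-LT-fPm} are precisely what is needed.
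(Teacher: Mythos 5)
Your proposal is correct and follows essentially the same route as the paper: all six properties are read off from the generating function $\mathrm{e}_{\pi_{\lambda,\beta}}(f(z);x)$ (which you merely write in the explicit closed form $(1+z)^{x}/E_{\beta}(\lambda z)$), with (P1)--(P4) obtained from the same Cauchy-product/coefficient comparisons, (P5) from $\mathbb{E}(\mathrm{e}_{\pi_{\lambda,\beta}}(f(z);\cdot))=1$, and (P6) from the same Cauchy estimate on a small circle. The only (welcome) differences are cosmetic: you fuse the paper's two-step derivation of (P2) into a single substitution $z=\mathrm{e}^{w}-1$, and you are slightly more explicit than the paper about justifying the sum--integral interchange in (P5).
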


\begin{proof}
Let $C_{n}^{\lambda,\beta}(x)$, $n\in\mathbb{N}_{0}$, be the generalized
Appell polynomials generated by the measure $\pi_{\lambda,\beta}$
and let $x,y\in\mathbb{R}$ be given.
\begin{description}
\item [{(P1)}] In view of equation \eqref{eq:Wick-Apell}, we have
\[
{\displaystyle \mathrm{e}_{\pi_{\lambda,\beta}}(f(z);x):=}{\displaystyle \frac{\mathrm{e}^{xf(z)}}{l_{\pi_{\lambda,\beta}}(f(z))}}=\sum_{m=0}^{\infty}\frac{f(z)^{m}}{m!}A_{m}^{\lambda,\beta}(x),
\]
for any $z\in V$ (neighborhood of zero in $\mathbb{C}$). Denote
$A_{l}$ the polynomials of degree $l$ generated by $f(z)$, i.e.,
\[
f(z)=\log(1+z)=\sum_{l=0}^{\infty}\frac{z^{l}}{l!}A_{l}
\]
where 
\[
A_{l}=\frac{\mathrm{d}^{l}}{\mathrm{d}z^{l}}f(z)\Big|_{z=0}=(-1)^{l+1}(l-1)!
\]
for all $l>0$ and $A_{0}=0$. Hence, 
\begin{align*}
\sum_{n=0}^{\infty}\frac{z^{n}}{n!}C_{n}^{\lambda,\beta}(x) & =A_{0}^{\lambda,\beta}(x)+\sum_{m=1}^{\infty}\frac{1}{m!}\left(\sum_{l=0}^{\infty}\frac{z^{l}}{l!}A_{l}\right)^{m}A_{m}^{\lambda,\beta}(x)\\
 & =A_{0}^{\lambda,\beta}(x)+\sum_{m=1}^{\infty}\frac{1}{m!}\sum_{n=m}^{\infty}\frac{z^{n}}{n!}\sum_{l_{1}+\dots+l_{m}=n}\frac{n!}{l_{1}!\dots l_{m}!}\prod_{i=1}^{m}A_{l_{i}}A_{m}^{\lambda,\beta}(x)\\
 & =A_{0}^{\lambda,\beta}(x)+\sum_{m=1}^{\infty}\frac{1}{m!}\sum_{n=m}^{\infty}\frac{z^{n}}{n!}\underbrace{\sum_{l_{1}+\dots+l_{m}=n}\frac{(-1)^{n+m}n!}{l_{1}\dots l_{m}}}_{=:A_{n}^{m}}A_{m}^{\lambda,\beta}(x)\\
 & =\sum_{n=0}^{\infty}\frac{z^{n}}{n!}\sum_{m=0}^{n}\frac{A_{n}^{m}}{m!}A_{m}^{\lambda,\beta}(x),
\end{align*}
where $A_{0}^{0}=1$ and $l_{i}\in\left\{ 1,\dots,n\right\} $ for
all $i=1,\dots,m.$ The result follows by comparing both sides of
the equation.
\item [{(P2)}] Note that $g(z)=\mathrm{e}^{z}-1$ is the inverse of $f(z)=\log(1+z)$.
Similar as in (P1), we use equation \eqref{eq:Wick-gen-Appell} such
that for any $z\in V$ (neighborhood of zero in $\mathbb{C}$), we
have 
\[
{\displaystyle \frac{\mathrm{e}^{xz}}{l_{\pi_{\lambda,\beta}}(z)}}=\sum_{m=0}^{\infty}\frac{g(z)^{m}}{m!}C_{m}^{\lambda,\beta}(x).
\]
Note that
\[
g(z)=\mathrm{e}^{z}-1=\sum_{l=1}^{\infty}\frac{z^{l}}{l!}.
\]
Thus,
\begin{align*}
\sum_{n=0}^{\infty}\frac{z^{n}}{n!}A_{n}^{\lambda,\beta}(x) & =C_{0}^{\lambda,\beta}(x)+\sum_{m=1}^{\infty}\frac{1}{m!}\left(\sum_{l=1}^{\infty}\frac{z^{l}}{l!}\right)^{m}C_{m}^{\lambda,\beta}(x)\\
 & =C_{0}^{\lambda,\beta}(x)+\sum_{m=1}^{\infty}\frac{1}{m!}\sum_{n=m}^{\infty}\frac{z^{n}}{n!}\underbrace{\sum_{l_{1}+\dots+l_{m}=n}\frac{n!}{l_{1}!\dots l_{m}!}}_{=:B_{n}^{m}}C_{m}^{\lambda,\beta}(x)\\
 & =\sum_{n=0}^{\infty}\frac{z^{n}}{n!}\sum_{m=0}^{n}\frac{B_{n}^{m}}{m!}C_{m}^{\lambda,\beta}(x),
\end{align*}
where $B_{0}^{0}=1$ and $l_{i}\in\left\{ 1,\dots,n\right\} $ for
all $i=1,\dots,m.$ Comparing both sides of the equation, we obtain
\begin{equation}
A_{n}^{\lambda,\beta}(x)=\sum_{m=0}^{n}\frac{B_{n}^{m}}{m!}C_{m}^{\lambda,\beta}(x).\label{eq:appell-gen-appell-formula}
\end{equation}
Now, we use the equality for any $z\in W$ (neighborhood of zero in
$\mathbb{C}$), 
\[
\mathrm{e}^{xz}=\mathrm{e}_{\pi_{\lambda,\beta}}(z;x)l_{\pi_{\lambda,\beta}}(z)
\]
so that 
\begin{align*}
\sum_{n=0}^{\infty}\frac{x^{n}}{n!}z^{n} & =\sum_{m=0}^{\infty}\frac{z^{m}}{m!}A_{m}^{\lambda,\beta}(x)\cdot\sum_{k=0}^{\infty}\frac{z^{k}}{k!}M_{\lambda,\beta}(n)\\
 & =\sum_{n=0}^{\infty}\frac{z^{n}}{n!}\left(\sum_{k=0}^{n}\binom{n}{k}A_{k}^{\lambda,\beta}(x)M_{\lambda,\beta}(n-k)\right).
\end{align*}
This implies that 
\begin{equation}
z^{n}=\sum_{k=0}^{n}\binom{n}{k}A_{k}^{\lambda,\beta}(x)M_{\lambda,\beta}(n-k).\label{eq:Appell-z^n}
\end{equation}
The result follows by applying equation \eqref{eq:appell-gen-appell-formula}
to \eqref{eq:Appell-z^n}.
\item [{(P3)}] By definition of the Wick exponential,
\[
\mathrm{e}_{\pi_{\lambda,\beta}}(f(z);x+y)=\mathrm{e}_{\pi_{\lambda,\beta}}(f(z);x)\mathrm{e}_{\pi_{\lambda,\beta}}(f(z);y)l_{\pi_{\lambda,\beta}}(f(z)).
\]
The Taylor expansion of $l_{\pi_{\lambda,\beta}}(f(z))$ around $z=0$
is given by
\begin{align*}
l_{\pi_{\lambda,\beta}}(f(z)) & =E_{\beta}(\lambda z)=\sum_{m=0}^{\infty}\frac{(\lambda z)^{m}}{\Gamma(m\beta+1)}\\
 & =\sum_{m=0}^{\infty}\frac{z^{m}}{m!}\underbrace{\frac{\lambda^{m}m!}{\Gamma(m\beta+1)}}_{=:\tilde{M}_{\lambda,\beta}(m)}=\sum_{m=0}^{\infty}\frac{z^{m}}{m!}\tilde{M}_{\lambda,\beta}(m).
\end{align*}
Then, we have 
\begin{align*}
\sum_{n=0}^{\infty}\frac{z^{n}}{n!}C_{n}^{\lambda,\beta}(x+y) & =\sum_{k=0}^{\infty}\frac{z^{k}}{k!}C_{k}^{\lambda,\beta}(x)\cdot\sum_{l=0}^{\infty}\frac{z^{l}}{l!}C_{l}^{\lambda,\beta}(y)\cdot\sum_{m=0}^{\infty}\frac{z^{m}}{m!}\tilde{M}_{\lambda,\beta}(m)\\
 & =\sum_{n=0}^{\infty}\frac{z^{n}}{n!}{\displaystyle \sum_{k+l+m=n}\frac{n!}{k!l!m!}}C_{k}^{\lambda,\beta}(x)C_{l}^{\lambda,\beta}(y)\tilde{M}_{\lambda,\beta}(m).
\end{align*}
The result follows by comparing the coefficients in both sides of
the equation.
\item [{(P4)}] Again, by definition of the Wick exponential, 
\[
\mathrm{e}_{\pi_{\lambda,\beta}}(f(z);x+y)=\mathrm{e}_{\pi_{\lambda,\beta}}(f(z);x)\exp(yf(z)).
\]
The Taylor expansion of $\exp(yf(z))$ around $z=0$ is given by 
\[
\exp(yf(z))=\sum_{m=0}^{\infty}\frac{z^{m}}{m!}(y)_{m},
\]
where $(y)_{m}$ are the falling factorials (see for example \cite{Gradshteyn2014})
given by 
\[
(y)_{m}:=\begin{cases}
1 & \mathrm{if}\,m=0\\
y(y-1)\dots(y-m+1) & \mathrm{if}\,m\in\mathbb{N}.
\end{cases}
\]
 Hence, 
\begin{align*}
\sum_{n=0}^{\infty}\frac{z^{n}}{n!}C_{n}^{\lambda,\beta}(x+y) & =\sum_{k=0}^{\infty}\frac{z^{k}}{k!}C_{k}^{\lambda,\beta}(x)\cdot\sum_{m=0}^{\infty}\frac{z^{m}}{m!}(y)_{m}\\
 & =\sum_{n=0}^{\infty}\frac{z^{n}}{n!}{\displaystyle \sum_{k+m=n}\frac{n!}{k!m!}}C_{k}^{\lambda,\beta}(x)(y)_{m}\\
 & =\sum_{n=0}^{\infty}\frac{z^{n}}{n!}{\displaystyle \sum_{k=0}^{n}\binom{n}{k}}C_{k}^{\lambda,\beta}(x)(y)_{n-k}.
\end{align*}
The claim follows again by comparing the coefficients in both sides
of the equation.
\item [{(P5)}] Note that
\[
\mathbb{E}\big(\mathrm{e}_{\pi_{\lambda,\beta}}(f(z);\cdot)\big)=\frac{1}{l_{\pi_{\lambda,\beta}}(f(z))}\mathbb{E}(\mathrm{e}^{f(z)\cdot})=\frac{l_{\pi_{\lambda,\beta}}(f(z))}{l_{\pi_{\lambda,\beta}}(f(z))}=1
\]
and using \eqref{eq:Wick-expo} we obtain
\[
\mathbb{E}\big(\mathrm{e}_{\pi_{\lambda,\beta}}(f(z);\cdot)\big)=\sum_{n=0}^{\infty}\frac{z^{n}}{n!}\int_{\mathbb{R}}C_{n}^{\lambda,\beta}(x)\,\mathrm{d}\pi_{\lambda,\beta}(x)
\]
which implies the result comparing the coefficients.
\item [{(P6)}] Given $\varepsilon>0$ let $\sigma_{\varepsilon}>0$ be
such that $|f(z)|<\varepsilon$ for any $z\in\{z'\mid|z'|=\sigma_{\varepsilon}\}$.
The following bound follows from the definition of the polynomials
$C_{n}^{\lambda,\beta}(\cdot)$, $n\in\mathbb{N}$ and the Cauchy
inequality 
\begin{align*}
\big|C_{n}^{\lambda,\beta}(x)\big| & \le\frac{n!}{2\pi}\int_{|z|=\sigma_{\varepsilon}}\frac{|\mathrm{e}_{\pi_{\lambda,\beta}}(f(z);x)|}{|z|^{n+1}}\,|\mathrm{d}z|\\
 & \le\frac{n!}{2\pi}\int_{|z|=\sigma_{\varepsilon}}\frac{\mathrm{e}^{|f(z)||x|}}{|z|^{n+1}|l_{\pi_{\lambda,\beta}}(f(z))|}\,|\mathrm{d}z|\\
 & \le n!\underbrace{\sup_{|z|=\sigma_{\varepsilon}}\frac{1}{|l_{\pi_{\lambda,\beta}}(f(z))|}}_{=:C_{\varepsilon}}\frac{\mathrm{e}^{\varepsilon|x|}}{\sigma_{\varepsilon}^{n}}\\
 & =C_{\varepsilon}n!\sigma_{\varepsilon}^{-n}\mathrm{e}^{\varepsilon|x|}.
\end{align*}
This concludes the proof.\hfill{}$\qedhere$
\end{description}
\end{proof}
An alternative representation of the properties (P1) and (P2) of $C_{n}^{\lambda,\beta}(\cdot)$
from Proposition \ref{prop:gen-Appell-prop} is given in the following
corollary.
\begin{cor}
\label{cor:gen-Appell-polynomials}For any $x,y\in\mathbb{R}$, the
polynomials $C_{n}^{\lambda,\beta}(\cdot)$, $n\in\mathbb{N}$, satisfy
the following properties
\begin{description}
\item [{(P1$'$)}] $C_{n}^{\lambda,\beta}(x)={\displaystyle \sum_{m=0}^{n}s(n,m)A_{m}^{\lambda,\beta}(x),}$
where $s(m,n)$ is the Stirling numbers of the first kind.
\item [{(P2$'$)}] $x^{n}={\displaystyle \sum_{k=0}^{n}\sum_{m=0}^{k}\binom{n}{k}}C_{m}^{\lambda,\beta}(x)S(k,m)M_{\lambda,\beta}(n-k)$,
where $S(m,n)$ is the Stirling numbers of the second kind.
\end{description}
\end{cor}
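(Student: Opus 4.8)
The plan is to identify the combinatorial coefficients $A_n^m/m!$ and $B_k^m/m!$ appearing in (P1) and (P2) of Proposition \ref{prop:gen-Appell-prop} with the Stirling numbers $s(n,m)$ and $S(k,m)$, respectively; the representations (P1$'$) and (P2$'$) then follow by direct substitution into the formulas already proved. Nothing beyond the exponential generating function characterizations of the Stirling numbers is required.

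First I would recall the generating functions
\[
\frac{\big(\log(1+z)\big)^{m}}{m!}=\sum_{n=m}^{\infty}s(n,m)\frac{z^{n}}{n!},\qquad
\frac{(\mathrm{e}^{z}-1)^{m}}{m!}=\sum_{k=m}^{\infty}S(k,m)\frac{z^{k}}{k!}.
\]
Using $f(z)=\log(1+z)=\sum_{l\ge1}\frac{(-1)^{l+1}}{l}z^{l}$ — equivalently the coefficients $A_{l}=(-1)^{l+1}(l-1)!$ recorded in the proof of (P1) — the multinomial expansion of $f(z)^{m}$ gives, after collecting the sign $\prod_{i=1}^{m}(-1)^{l_{i}+1}=(-1)^{n+m}$,
\[
\frac{f(z)^{m}}{m!}=\sum_{n=m}^{\infty}\frac{z^{n}}{n!}\,\frac{n!}{m!}\!\!\sum_{l_{1}+\dots+l_{m}=n}\!\!\frac{(-1)^{n+m}}{l_{1}\cdots l_{m}}=\sum_{n=m}^{\infty}\frac{z^{n}}{n!}\,\frac{A_{n}^{m}}{m!}.
\]
Comparing coefficients with the generating function for $s(n,m)$ yields $A_{n}^{m}/m!=s(n,m)$, so that (P1$'$) is exactly (P1) with this substitution. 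The identical computation applied to $g(z)=\mathrm{e}^{z}-1=\sum_{l\ge1}z^{l}/l!$ gives $\frac{g(z)^{m}}{m!}=\sum_{k\ge m}\frac{z^{k}}{k!}\frac{B_{k}^{m}}{m!}$, hence $B_{k}^{m}/m!=S(k,m)$, and (P2$'$) is obtained from (P2) by replacing $B_{k}^{m}/m!$ with $S(k,m)$.

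I do not anticipate a genuine obstacle; the only point requiring attention is the bookkeeping of sign conventions for the Stirling numbers of the first kind (signed versus unsigned), which must be fixed so that $\big(\log(1+z)\big)^{m}/m!$ is the correct generating function. Once that convention is in place, the corollary is an immediate rewriting of Proposition \ref{prop:gen-Appell-prop}.
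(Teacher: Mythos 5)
Your proposal is correct and follows the same route as the paper: the paper's proof simply cites Proposition \ref{prop:Stirling-numbers-explicit}, which establishes the identifications $s(n,m)=A_{n}^{m}/m!$ and $S(n,m)=B_{n}^{m}/m!$ via exactly the multinomial expansion of the generating functions that you carry out, and then substitutes into (P1) and (P2). No gaps.
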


\begin{proof}
The claim in (P1$'$) (resp. (P2$'$)) follows directly from Proposition
\ref{prop:gen-Appell-prop}--(P1) (resp. (P2)) and Proposition~\ref{prop:Stirling-numbers-explicit}--1
(resp. 2) in Appendix~\ref{subsec:Stirling-Numbers}. 
\end{proof}
\begin{rem}
In Appendix \ref{subsec:Connection-Appell-Bell}, we also obtain the
explicit form of the generalized Appell polynomials generated by the
fractional Poisson measure in terms of the Bell polynomials, see Theorem
\ref{thm:gen-Appell-Bell-relation}.
\end{rem}

\section{Generalized Dual Appell System}

\label{sec:Dual-Appell-System}Let us consider the Hilbert space $L^{2}(\pi_{\lambda,\beta}):=L^{2}(\mathbb{R},\mathcal{B}(\mathbb{R}),\pi_{\lambda,\beta};\mathbb{C})$
of all complex-valued measurable functions whose modulus square is
integrable with respect to $\pi_{\lambda,\beta}$. In addition, we
denote by $\mathcal{P}(\mathbb{R})$ the set of all polynomials in
$\mathbb{R}$ with complex coefficients. Using the Proposition~\ref{prop:gen-Appell-prop}--(P2),
we can write $\mathcal{P}(\mathbb{R})$ in the form 
\[
\mathcal{P}(\mathbb{R})=\left\{ \varphi:\mathbb{R}\rightarrow\mathbb{C}\mid\varphi(x)=\sum_{k=0}^{n}\varphi_{k}C_{k}^{\lambda,\beta}(x),~\varphi_{k}\in\mathbb{C},~n\in\mathbb{N}_{0}\right\} .
\]
Since $\pi_{\lambda,\beta}$ admits the moments of all orders, then
we have $\mathcal{P}(\mathbb{R})\subset L^{2}(\pi_{\lambda,\beta})$.
On the other hand, Remark \ref{rem:H2} ensures that the inclusion
$\mathcal{P}(\mathbb{R})\hookrightarrow L^{2}(\pi_{\lambda,\beta})$
is dense.

In $\mathcal{P}(\mathbb{R})$ the following notion of convergence
is defined: a sequence of polynomials $(\varphi_{i})_{i\in\mathbb{N}}\subset\mathcal{P}(\mathbb{R})$
converges to $\varphi\in\mathcal{P}(\mathbb{R})$ if and only if the
sequence $(n_{\varphi_{i}})_{i\in\mathbb{N}}$ of the degree of the
polynomials $\varphi_{i}$ is bounded and the coefficients converge
term by term.

Let us consider the differential operator of the order $k$, $k\in\mathbb{N}_{0}$,
denoted by $\nabla^{k}$, defined in $\mathcal{P}(\mathbb{R})$ by
\[
(\nabla^{k}\varphi)(x)={\displaystyle \frac{\mathrm{d}^{k}\varphi(x)}{\mathrm{d}x^{k}}},\quad\varphi\in\mathcal{P}(\mathbb{R}).
\]
Note that $\nabla^{k}$ is a continuous linear operator with respect
to the above convergence.

Let $\mathcal{P}'_{\pi_{\lambda,\beta}}(\mathbb{R})$ be the dual
of $\mathcal{P}(\mathbb{R})$ with respect to $L^{2}(\pi_{\lambda,\beta}),$
that is, $\mathcal{P}'_{\pi_{\lambda,\beta}}(\mathbb{R})$ is the
set of all continuous linear functionals defined in $\mathcal{P}(\mathbb{R})$
which are given in terms of the inner product in $L^{2}(\pi_{\lambda,\beta})$.
More precisely, for every $\Phi\in L^{2}(\pi_{\lambda,\beta})\subset\mathcal{P}'_{\pi_{\lambda,\beta}}(\mathbb{R})$
and any $\varphi\in\mathcal{P}(\mathbb{R})$ we use the notation,
\[
\Phi(\varphi):=\langle\!\langle\varphi,\Phi\rangle\!\rangle_{\pi_{\lambda,\beta}}:=(\!(\varphi,\bar{\Phi})\!)_{L^{2}(\pi_{\lambda,\beta})}.
\]
The bilinear map $\langle\!\langle\cdot,\cdot\rangle\!\rangle_{\pi_{\lambda,\beta}}$
is also called \textit{dual pair} between $\mathcal{P}'_{\pi_{\lambda,\beta}}(\mathbb{R})$
and $\mathcal{P}(\mathbb{R})$. The elements of $\mathcal{P}'_{\pi_{\lambda,\beta}}(\mathbb{R})$
are called \textit{generalized functions}. So, we obtain a chain of
continuous and dense embeddings
\[
\mathcal{P}(\mathbb{R})\subset L^{2}(\pi_{\lambda,\beta})\subset\mathcal{P}'_{\pi_{\lambda,\beta}}(\mathbb{R}).
\]
Consider the function $g(z)=\mathrm{e}^{z}-1$ which is the inverse
of $f(z)=\log(1+z)$. For $k\in\mathbb{N}$, recall that the Taylor
expansion of $\left(g(z)\right)^{k}$ and $\left(f(z)\right)^{k}$
at $z=0$ are given by the following series
\[
g(z)^{k}=k!\sum_{n=k}^{\infty}S(n,k)\frac{z^{n}}{n!}\quad\mathrm{and}\quad f(z)^{k}=k!\sum_{n=k}^{\infty}s(n,k)\frac{z^{n}}{n!},
\]
where $s(n,k)$ and $S(n,k)$ are the Stirling numbers of the first
kind and Stirling numbers of the second kind, respectively. Then,
for any $\varphi\in\mathcal{P}(\mathbb{R})$, we have
\[
(g(\nabla)^{k}\varphi)(x)=k!\sum_{n=k}^{\infty}\frac{S(n,k)}{n!}\frac{\mathrel{\mathrm{d}^{n}}}{\mathrm{d}x^{n}}\varphi(x).
\]
It follows from \cite[Eq.~(26.8.37)]{Olver2010} that 
\[
g(\nabla)^{k}=\Delta^{k},
\]
where $\Delta$ is the difference operator defined by 
\[
(\Delta f)(x):=f(x+1)-f(x).
\]
It is easy to see that $g(\nabla)^{k}$ is a continuous operator on
$\mathcal{P}(\mathbb{R})$, hence its adjoint 
\[
\left(g(\nabla)^{k}\right)^{*}:\mathcal{P}'_{\pi_{\lambda,\beta}}(\mathbb{R})\longrightarrow\mathcal{P}'_{\pi_{\lambda,\beta}}(\mathbb{R})
\]
 is well-defined and is given by
\[
\langle\!\langle\varphi,\left(g(\nabla)^{k}\right)^{*}\Phi\rangle\!\rangle_{\pi_{\lambda,\beta}}=\langle\!\langle g(\nabla)^{k}\varphi,\Phi\rangle\!\rangle_{\pi_{\lambda,\beta}},\qquad\forall\Phi\in\mathcal{P}'_{\pi_{\lambda,\beta}}(\mathbb{R}),~\varphi\in\mathcal{P}(\mathbb{R}).
\]

As $\boldsymbol{1}\in L^{2}(\pi_{\lambda,\beta})\subset\mathcal{P}'_{\pi_{\lambda,\beta}}(\mathbb{R}),$
where $\boldsymbol{1}(x)=1,$ for all $x\in\mathbb{R},$ we define
the system of elements $Q_{k}^{\pi_{\lambda,\beta}}$, $k\in\mathbb{N}_{0}$
in $\mathcal{P}'_{\pi_{\lambda,\beta}}(\mathbb{R})$ by

\[
Q_{k}^{\pi_{\lambda,\beta}}:=\left(g(\nabla)^{k}\right)^{*}\boldsymbol{1},\quad k\in\mathbb{N}_{0}.
\]
The system $\mathbb{Q}^{\pi_{\lambda,\beta}}:=\big\{ Q_{k}^{\pi_{\lambda,\beta}}:=\left(g(\nabla)^{k}\right)^{*}\boldsymbol{1},k\in\mathbb{N}_{0}\big\}$
is called the \textit{generalized dual Appell system }\textit{\emph{associated
with $\pi_{\lambda,\beta}.$}} The pair $\mathbb{A}^{\pi_{\lambda,\beta}}:=(\mathbb{P}^{\pi_{\lambda,\beta}},\mathbb{Q}^{\pi_{\lambda,\beta}})$
is called \textit{generalized Appell system} generated by the measure
$\pi_{\lambda,\beta}$.
\begin{lem}
\label{lem:int-deriv-Cn}Let $n,k\in\mathbb{N}_{0}$ be given. Then
\[
\int_{\mathbb{R}}\nabla^{k}C_{n}^{\lambda,\beta}(x)\,\mathrm{d}\pi_{\lambda,\beta}(x)=\begin{cases}
k!s(n,k), & k\leq n,\\
0, & k>n,
\end{cases}
\]
where $s(n,k)$ is the Stirling numbers of the first kind.
\end{lem}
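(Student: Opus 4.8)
The plan is to read the integral off the generating function of the $C_{n}^{\lambda,\beta}$. Recall from \eqref{eq:Wick-expo} that, for $z$ in a neighborhood $V$ of $0\in\mathbb{C}$,
\[
\mathrm{e}_{\pi_{\lambda,\beta}}(f(z);x)=\frac{\mathrm{e}^{xf(z)}}{l_{\pi_{\lambda,\beta}}(f(z))}=\sum_{n=0}^{\infty}\frac{z^{n}}{n!}C_{n}^{\lambda,\beta}(x).
\]
Since in this expression only the factor $\mathrm{e}^{xf(z)}$ depends on $x$, applying $\nabla^{k}=\mathrm{d}^{k}/\mathrm{d}x^{k}$ produces a factor $f(z)^{k}$, i.e. $\nabla^{k}\mathrm{e}_{\pi_{\lambda,\beta}}(f(z);x)=f(z)^{k}\mathrm{e}_{\pi_{\lambda,\beta}}(f(z);x)$; differentiating the series term by term (justified below) gives
\[
f(z)^{k}\,\mathrm{e}_{\pi_{\lambda,\beta}}(f(z);x)=\sum_{n=0}^{\infty}\frac{z^{n}}{n!}\,\nabla^{k}C_{n}^{\lambda,\beta}(x),\qquad z\in V.
\]

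Next I would integrate both sides against $\pi_{\lambda,\beta}$. On the left, $f(z)^{k}$ does not depend on $x$ and, exactly as in the proof of Proposition~\ref{prop:gen-Appell-prop}--(P5), $\mathbb{E}\big(\mathrm{e}_{\pi_{\lambda,\beta}}(f(z);\cdot)\big)=l_{\pi_{\lambda,\beta}}(f(z))^{-1}\,\mathbb{E}(\mathrm{e}^{f(z)\cdot})=1$, so the left side equals $f(z)^{k}$. Interchanging integral and sum on the right (again justified below) yields
\[
f(z)^{k}=\sum_{n=0}^{\infty}\frac{z^{n}}{n!}\int_{\mathbb{R}}\nabla^{k}C_{n}^{\lambda,\beta}(x)\,\mathrm{d}\pi_{\lambda,\beta}(x),\qquad z\in V.
\]
Now I compare this with the Taylor expansion $f(z)^{k}=\log(1+z)^{k}=k!\sum_{n=k}^{\infty}s(n,k)\,z^{n}/n!$ recalled above (Stirling numbers of the first kind). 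Matching the coefficient of $z^{n}/n!$ gives $\int_{\mathbb{R}}\nabla^{k}C_{n}^{\lambda,\beta}\,\mathrm{d}\pi_{\lambda,\beta}=k!\,s(n,k)$ for $k\le n$, while for $k>n$ the right-hand series has no $z^{n}$ term, so the integral vanishes (consistently with $\nabla^{k}C_{n}^{\lambda,\beta}\equiv 0$ since $\deg C_{n}^{\lambda,\beta}=n$).

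The only genuine work is the two interchanges, and both follow from a bound on $\nabla^{k}C_{n}^{\lambda,\beta}$ of the same shape as (P6). Running the Cauchy-inequality estimate from the proof of Proposition~\ref{prop:gen-Appell-prop}--(P6) on $f(z)^{k}\mathrm{e}_{\pi_{\lambda,\beta}}(f(z);x)$ instead of $\mathrm{e}_{\pi_{\lambda,\beta}}(f(z);x)$, with $\sigma_{\varepsilon}$ chosen so that $|f(z)|<\varepsilon$ on $|z|=\sigma_{\varepsilon}$, gives
\[
\big|\nabla^{k}C_{n}^{\lambda,\beta}(x)\big|\le C_{\varepsilon}'\,n!\,\sigma_{\varepsilon}^{-n}\mathrm{e}^{\varepsilon|x|},\qquad C_{\varepsilon}':=\varepsilon^{k}\sup_{|z|=\sigma_{\varepsilon}}\frac{1}{|l_{\pi_{\lambda,\beta}}(f(z))|}.
\]
Taking $\varepsilon$ small enough that $\int_{\mathbb{R}}\mathrm{e}^{\varepsilon|x|}\,\mathrm{d}\pi_{\lambda,\beta}(x)<\infty$ (Proposition~\ref{prop:Analytic-LT-fPm}), we obtain $\int_{\mathbb{R}}|\nabla^{k}C_{n}^{\lambda,\beta}|\,\mathrm{d}\pi_{\lambda,\beta}\le C''\,n!\,\sigma_{\varepsilon}^{-n}$, hence $\sum_{n}\frac{|z|^{n}}{n!}\int_{\mathbb{R}}|\nabla^{k}C_{n}^{\lambda,\beta}|\,\mathrm{d}\pi_{\lambda,\beta}<\infty$ for $|z|<\sigma_{\varepsilon}$; this legitimizes the termwise integration by Fubini--Tonelli and, together with local uniform convergence in $x$ on compacta, the termwise differentiation. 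The identity therefore holds on a (possibly smaller) neighborhood of $0$, which is all that is needed to compare coefficients. I expect the bookkeeping of this domain of validity (shrinking $V$) to be the only delicate point; the rest is a direct coefficient comparison.
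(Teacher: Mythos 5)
Your proof is correct and rests on the same two pillars as the paper's: the identity $\nabla^{k}\mathrm{e}_{\pi_{\lambda,\beta}}(f(z);x)=f(z)^{k}\mathrm{e}_{\pi_{\lambda,\beta}}(f(z);x)$ and the Stirling expansion $f(z)^{k}=k!\sum_{n\ge k}s(n,k)\,z^{n}/n!$. The only real difference is the order of operations. The paper first extracts the coefficient of $z^{n}/n!$ from $f(z)^{k}\mathrm{e}_{\pi_{\lambda,\beta}}(f(z);x)$ as a Cauchy product of two series, obtaining the explicit finite identity $\nabla^{k}C_{n}^{\lambda,\beta}=k!\sum_{i=k}^{n}\binom{n}{i}s(i,k)\,C_{n-i}^{\lambda,\beta}$, and only then integrates this \emph{finite} sum using Proposition~\ref{prop:gen-Appell-prop}--(P5). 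You integrate the whole generating identity first, reduce the left-hand side to $f(z)^{k}$ via the normalization $\mathbb{E}\big(\mathrm{e}_{\pi_{\lambda,\beta}}(f(z);\cdot)\big)=1$, and compare coefficients afterwards. Your route buys a cleaner coefficient comparison (against the known series for $f(z)^{k}$ alone, rather than a product of two series) at the price of having to interchange an infinite sum with the integral; you handle this correctly with the (P6)-type Cauchy estimate giving $|\nabla^{k}C_{n}^{\lambda,\beta}(x)|\le C'_{\varepsilon}\,n!\,\sigma_{\varepsilon}^{-n}\mathrm{e}^{\varepsilon|x|}$ and the integrability of $\mathrm{e}^{\varepsilon|\cdot|}$ from Proposition~\ref{prop:Analytic-LT-fPm}. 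The paper's route avoids that Fubini step entirely because, once the coefficient is extracted, only finitely many terms are integrated; as a by-product it also records the formula for $\nabla^{k}C_{n}^{\lambda,\beta}$ itself, which your argument does not produce. Both arguments are sound and yield the same conclusion.
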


\begin{proof}
The case $k>n$ is clear since $\nabla^{k}C_{n}^{\lambda,\beta}(x)=0.$
For the case $k\leq n$, we first note that
\begin{align*}
\nabla^{k}C_{n}^{\lambda,\beta}(x) & =\frac{\mathrm{d}^{k}}{\mathrm{d}x^{k}}\frac{\mathrm{d}^{n}}{\mathrm{d}z^{n}}\frac{\mathrm{e}^{xf(z)}}{E_{\beta}(\lambda z)}\Bigg|_{z=0}=\frac{\mathrm{d}^{n}}{\mathrm{d}z^{n}}\left[f(z)^{k}\frac{\mathrm{e}^{xf(z)}}{E_{\beta}(\lambda z)}\right]\Bigg|_{z=0}\\
 & =\sum_{i=0}^{n}\binom{n}{i}\frac{\mathrm{d}^{n}}{\mathrm{d}z^{n}}k!\sum_{j=k}^{\infty}s(j,k)\frac{z^{j}}{j!}\Bigg|_{z=0}C_{n-i}^{\lambda,\beta}(x)\\
 & =k!\sum_{i=k}^{n}\binom{n}{i}s(i,k)C_{n-i}^{\lambda,\beta}(x).
\end{align*}
By Proposition \ref{prop:gen-Appell-prop}--(P5), we obtain
\begin{align*}
\int_{\mathbb{R}}\nabla^{k}C_{n}^{\lambda,\beta}(x)\,\mathrm{d}\pi_{\lambda,\beta}(x) & =k!\sum_{i=k}^{n}\binom{n}{i}s(i,k)\int_{\mathbb{R}}C_{n-i}^{\lambda,\beta}(x)\,\mathrm{d}\pi_{\lambda,\beta}(x)\\
 & =k!\sum_{i=k}^{n}\binom{n}{i}s(i,k)\delta_{n,i}\\
 & =k!s(n,k).\qedhere
\end{align*}
\end{proof}
\begin{thm}
\label{thm:biorthogonal-property}The generalized Appell polynomial
system $\mathbb{P}^{\pi_{\lambda,\beta}}$ and the generalized dual
Appell system $\mathbb{Q}^{\pi_{\lambda,\beta}}$ are biorthogonal
with respect to $L^{2}(\pi_{\lambda,\beta})$ and satisfies 
\[
\langle\!\langle C_{n}^{\lambda,\beta},Q_{m}^{\pi_{\lambda,\beta}}\rangle\!\rangle_{\pi_{\lambda,\beta}}=n!\delta_{n,m}.
\]
\end{thm}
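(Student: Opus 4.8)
The plan is to compute the dual pairing $\langle\!\langle C_{n}^{\lambda,\beta},Q_{m}^{\pi_{\lambda,\beta}}\rangle\!\rangle_{\pi_{\lambda,\beta}}$ directly by unwinding the definition of $Q_{m}^{\pi_{\lambda,\beta}}$ as an adjoint. By definition $Q_{m}^{\pi_{\lambda,\beta}}=\left(g(\nabla)^{m}\right)^{*}\boldsymbol{1}$, so the adjoint relation gives
\[
\langle\!\langle C_{n}^{\lambda,\beta},Q_{m}^{\pi_{\lambda,\beta}}\rangle\!\rangle_{\pi_{\lambda,\beta}}=\langle\!\langle g(\nabla)^{m}C_{n}^{\lambda,\beta},\boldsymbol{1}\rangle\!\rangle_{\pi_{\lambda,\beta}}=\int_{\mathbb{R}}\big(g(\nabla)^{m}C_{n}^{\lambda,\beta}\big)(x)\,\mathrm{d}\pi_{\lambda,\beta}(x),
\]
using that the dual pairing against $\boldsymbol{1}\in L^{2}(\pi_{\lambda,\beta})$ is just integration with respect to $\pi_{\lambda,\beta}$. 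So the whole statement reduces to evaluating this integral.

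Next I would expand $g(\nabla)^{m}$ via its Stirling-number Taylor series $g(z)^{m}=m!\sum_{j\ge m}S(j,m)z^{j}/j!$, so that $g(\nabla)^{m}=m!\sum_{j\ge m}\frac{S(j,m)}{j!}\nabla^{j}$ as an operator on $\mathcal{P}(\mathbb{R})$ (the sum is finite when applied to a polynomial). Applying this to $C_{n}^{\lambda,\beta}$ and integrating term by term, I invoke Lemma~\ref{lem:int-deriv-Cn}, which says $\int_{\mathbb{R}}\nabla^{j}C_{n}^{\lambda,\beta}\,\mathrm{d}\pi_{\lambda,\beta}=j!\,s(n,j)$ for $j\le n$ and $0$ otherwise. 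This yields
\[
\int_{\mathbb{R}}\big(g(\nabla)^{m}C_{n}^{\lambda,\beta}\big)(x)\,\mathrm{d}\pi_{\lambda,\beta}(x)=m!\sum_{j=m}^{n}\frac{S(j,m)}{j!}\,j!\,s(n,j)=m!\sum_{j=m}^{n}S(j,m)\,s(n,j).
\]

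The final step is the classical inversion identity between the two kinds of Stirling numbers, $\sum_{j}s(n,j)S(j,m)=\delta_{n,m}$ (the matrices of Stirling numbers of the first and second kind are mutually inverse). This collapses the sum to $m!\,\delta_{n,m}=n!\,\delta_{n,m}$, which is exactly the claim; biorthogonality of $\mathbb{P}^{\pi_{\lambda,\beta}}$ and $\mathbb{Q}^{\pi_{\lambda,\beta}}$ follows since $n!\neq 0$. I do not anticipate a serious obstacle here: all the analytic content (convergence of the series expansions, continuity of $g(\nabla)^{m}$, the evaluation of the moment integrals) has already been established in Lemma~\ref{lem:int-deriv-Cn} and the surrounding discussion, so the proof is essentially the bookkeeping of combining that lemma with the Stirling inversion formula. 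The one point to state carefully is that the operator series for $g(\nabla)^{m}$ terminates after finitely many terms on any fixed polynomial, so term-by-term integration is legitimate without any limiting argument.
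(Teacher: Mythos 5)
Your proposal is correct and follows essentially the same route as the paper: unwind the adjoint defining $Q_{m}^{\pi_{\lambda,\beta}}$, expand $g(\nabla)^{m}$ via the Stirling-number series, apply Lemma~\ref{lem:int-deriv-Cn} term by term, and finish with the inversion identity $\sum_{j}s(n,j)S(j,m)=\delta_{n,m}$. The only cosmetic difference is that you make explicit the (correct) remark that the operator series terminates on polynomials, which the paper leaves implicit.
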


\begin{proof}
It follows from Lemma \ref{lem:int-deriv-Cn} that 
\begin{align*}
\langle\!\langle C_{n}^{\lambda,\beta},Q_{m}^{\pi_{\lambda,\beta}}\rangle\!\rangle_{\pi_{\lambda,\beta}} & =\int_{\mathbb{R}}m!\sum_{k=m}^{\infty}\frac{S(k,m)}{k!}\left(\nabla^{k}C_{n}^{\lambda,\beta}(x)\right)\,\mathrm{d}\pi_{\lambda,\beta}(x)\\
 & =m!\sum_{k=m}^{\infty}\frac{S(k,m)}{k!}k!s(n,k)\\
 & =m!\sum_{k=m}^{n}s(n,k)S(k,m)\\
 & =n!\delta_{n,m}.
\end{align*}
The last equality holds due to, see \cite[Eq.~(28.8.39)]{Olver2010}
\[
\sum_{j=k}^{n}s(j,k)S(n,j)=\sum_{j=k}^{n}s(n,j)S(j,k)=\delta_{n,k},\quad\forall n,k\in\mathbb{N}_{0}.\qedhere
\]
\end{proof}
\begin{thm}
For every element $\Phi\in\mathcal{P}'_{\pi_{\lambda,\beta}}(\mathbb{R})$,
there exist a unique sequence $(\Phi_{k})_{k=0}^{\infty}\subset\mathbb{C}$
such that 
\begin{equation}
\Phi=\sum_{k=0}^{\infty}\Phi_{k}Q_{k}^{\pi_{\lambda,\beta}}.\label{eq:f3}
\end{equation}
Conversely, the entire series of the form \eqref{eq:f3} generates
a generalized function in $\mathcal{P}'_{\pi_{\lambda,\beta}}(\mathbb{R})$.
\end{thm}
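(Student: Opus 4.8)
The plan is to read the coefficients off from the biorthogonality relation of Theorem~\ref{thm:biorthogonal-property} and then check that the resulting series acts on $\mathcal{P}(\mathbb{R})$ exactly as $\Phi$ does. The single observation used throughout is a degree-lowering property: writing $\varphi\in\mathcal{P}(\mathbb{R})$ in the generalized Appell basis as $\varphi=\sum_{n=0}^{N}\varphi_n C_n^{\lambda,\beta}$ (possible by Proposition~\ref{prop:gen-Appell-prop}--(P2)), bilinearity of the dual pair together with $\langle\!\langle C_n^{\lambda,\beta},Q_k^{\pi_{\lambda,\beta}}\rangle\!\rangle_{\pi_{\lambda,\beta}}=n!\,\delta_{n,k}$ gives $\langle\!\langle\varphi,Q_k^{\pi_{\lambda,\beta}}\rangle\!\rangle_{\pi_{\lambda,\beta}}=k!\,\varphi_k$ for $k\le N$ and $0$ for $k>N$. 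Consequently any series $\sum_{k\ge0}\Phi_k Q_k^{\pi_{\lambda,\beta}}$, with arbitrary $\Phi_k\in\mathbb{C}$, pairs with a fixed polynomial through only finitely many nonzero terms, so it defines a genuine linear functional on $\mathcal{P}(\mathbb{R})$.

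For existence, given $\Phi\in\mathcal{P}'_{\pi_{\lambda,\beta}}(\mathbb{R})$ I would set $\Phi_k:=\frac{1}{k!}\langle\!\langle C_k^{\lambda,\beta},\Phi\rangle\!\rangle_{\pi_{\lambda,\beta}}$ for $k\in\mathbb{N}_0$. Then, for $\varphi=\sum_{n=0}^{N}\varphi_n C_n^{\lambda,\beta}\in\mathcal{P}(\mathbb{R})$, the degree-lowering identity yields
\[
\langle\!\langle\varphi,\,{\textstyle\sum_{k=0}^{\infty}}\Phi_k Q_k^{\pi_{\lambda,\beta}}\rangle\!\rangle_{\pi_{\lambda,\beta}}=\sum_{k=0}^{N}\Phi_k\,k!\,\varphi_k=\sum_{k=0}^{N}\varphi_k\langle\!\langle C_k^{\lambda,\beta},\Phi\rangle\!\rangle_{\pi_{\lambda,\beta}}=\langle\!\langle\varphi,\Phi\rangle\!\rangle_{\pi_{\lambda,\beta}},
\]
so the two functionals agree on all of $\mathcal{P}(\mathbb{R})$, which is precisely \eqref{eq:f3}. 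Uniqueness is then automatic: pairing any representation $\Phi=\sum_k\Phi_k Q_k^{\pi_{\lambda,\beta}}$ with $C_n^{\lambda,\beta}$ and using biorthogonality forces $\Phi_n=\frac{1}{n!}\langle\!\langle C_n^{\lambda,\beta},\Phi\rangle\!\rangle_{\pi_{\lambda,\beta}}$, so the coefficients are determined by $\Phi$.

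For the converse, given an arbitrary sequence $(\Phi_k)_{k\ge0}\subset\mathbb{C}$ I would define $\Phi$ on $\mathcal{P}(\mathbb{R})$ by $\Phi(\varphi):=\sum_{k}\Phi_k\langle\!\langle\varphi,Q_k^{\pi_{\lambda,\beta}}\rangle\!\rangle_{\pi_{\lambda,\beta}}$; by the first paragraph this is the finite sum $\sum_{k=0}^{N}\Phi_k k!\,\varphi_k$ whenever $\deg\varphi\le N$, hence $\Phi$ is a well-defined linear functional. To see that it lies in $\mathcal{P}'_{\pi_{\lambda,\beta}}(\mathbb{R})$ I would check continuity for the convergence on $\mathcal{P}(\mathbb{R})$: if $\varphi_i\to\varphi$, the degrees stay bounded by some $N$ and the coefficients converge term by term --- in the monomial basis by definition, hence also in the Appell basis, since $C_n^{\lambda,\beta}$ has leading term $x^n$ so the change of basis is triangular with unit diagonal --- whence $\Phi(\varphi_i)=\sum_{k=0}^N\Phi_k k!\,(\varphi_i)_k\to\sum_{k=0}^N\Phi_k k!\,\varphi_k=\Phi(\varphi)$. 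Thus $\Phi\in\mathcal{P}'_{\pi_{\lambda,\beta}}(\mathbb{R})$ and $\Phi=\sum_k\Phi_k Q_k^{\pi_{\lambda,\beta}}$ by construction.

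The only substantive step is the degree-lowering property in the first paragraph, on which both halves rest; beyond the biorthogonality argument given there, it can also be seen from $Q_k^{\pi_{\lambda,\beta}}=(g(\nabla)^k)^{*}\boldsymbol{1}=(\Delta^k)^{*}\boldsymbol{1}$, which gives $\langle\!\langle\varphi,Q_k^{\pi_{\lambda,\beta}}\rangle\!\rangle_{\pi_{\lambda,\beta}}=\mathbb{E}(\Delta^k\varphi)$, together with $\Delta^k\varphi=0$ once $k>\deg\varphi$ since $\Delta$ lowers polynomial degree by one. Everything else is bookkeeping with the dual pair.
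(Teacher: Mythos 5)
Your proposal is correct and follows essentially the same route as the paper: the coefficients are read off from the biorthogonality relation $\langle\!\langle C_{n}^{\lambda,\beta},Q_{m}^{\pi_{\lambda,\beta}}\rangle\!\rangle_{\pi_{\lambda,\beta}}=n!\,\delta_{n,m}$, the pairing of the series with any fixed polynomial reduces to a finite sum, and continuity is checked against the stated convergence on $\mathcal{P}(\mathbb{R})$. If anything, your version is slightly more explicit than the paper's on two points it glosses over --- the uniqueness of the coefficients (forced by pairing with $C_{n}^{\lambda,\beta}$) and the compatibility of coefficientwise convergence between the monomial and Appell bases via the unitriangular change of basis.
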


\begin{proof}
Let $\Phi\in\mathcal{P}'_{\pi_{\lambda,\beta}}(\mathbb{R})$ be arbitrary.
For each $k\in\mathbb{N}_{0}$, let us consider the complex numbers
given by $\Phi_{k}:=\frac{1}{k!}\langle\!\langle C_{k}^{\lambda,\beta},\Phi\rangle\!\rangle_{\pi_{\lambda,\beta}}$,
and the functional in $\mathcal{P}(\mathbb{R})$ defined by 
\[
\mathcal{P}(\mathbb{R})\ni\varphi\mapsto{\displaystyle \sum_{k=0}^{\infty}k!\varphi_{k}\Phi_{k}\in\mathbb{C}.}
\]
Since this is a continuous linear functional which is given by the
inner product in $L^{2}(\pi_{\lambda,\beta})$, it defines an element
in $\mathcal{P}'_{\pi_{\lambda,\beta}}(\mathbb{R})$. We denote it
by $\Psi=\sum_{k=0}^{\infty}\Phi_{k}Q_{k}^{\pi_{\lambda,\beta}}$.
So, $\Psi$ is such that 
\[
\forall\varphi\in\mathcal{P}(\mathbb{R}),\ \langle\!\langle\varphi,\Psi\rangle\!\rangle_{\pi_{\lambda,\beta}}={\displaystyle \sum_{k=0}^{\infty}k!\varphi_{k}\Phi_{k}=\langle\!\langle\varphi,\Phi\rangle\!\rangle_{\pi_{\lambda,\beta}}.}
\]
Hence, it follows that $\Psi=\Phi$, since the representation for
$\Phi$ is unique.

Conversely, suppose that $\Phi=\sum_{k=0}^{\infty}\Phi_{k}Q_{k}^{\pi_{\lambda,\beta}}$,
$\Phi_{k}\in\mathbb{C}$, $k\in\mathbb{N}_{0}$. We are going to show
that $\Phi\in\mathcal{P}'_{\pi_{\lambda,\beta}}(\mathbb{R})$. Let
$\varphi\in\mathcal{P}(\mathbb{R})$ be of the form $\varphi=\sum_{k=0}^{n}\varphi_{k}C_{k}^{\lambda,\beta}.$
By Theorem \ref{thm:biorthogonal-property}, 
\[
\langle\!\langle\varphi,\Phi\rangle\!\rangle_{\pi_{\lambda,\beta}}={\displaystyle \sum_{k=0}^{n}k!\varphi_{k}\Phi_{k}=(\!(\varphi,\bar{\Phi})\!)_{L^{2}(\pi_{\lambda,\beta})}}.
\]
This is clearly a linear map and is also continuous in the topology
of $\mathcal{P}(\mathbb{R})$, being given by the inner product in
$L^{2}(\pi_{\lambda,\beta})$. These then defines $\Phi$ an element
in $\mathcal{P}'_{\pi_{\lambda,\beta}}(\mathbb{R})$.
\end{proof}

\section{Test and Generalized Function Spaces}

\label{sec:Test-Generalized-Function-Spaces}Given $\kappa\in[0,1]$
and $q\in\mathbb{N}_{0}$, let $\varphi\in\mathcal{P}(\mathbb{R})$
be such that $\varphi=\sum_{n=0}^{k}\varphi_{n}C_{n}^{\lambda,\beta}$.
We introduce in $\mathcal{P}(\mathbb{R})$ a Hilbert norm by 
\[
\|\varphi\|_{q,\kappa,\pi_{\lambda,\beta}}^{2}:=\sum_{n=0}^{k}(n!)^{1+\kappa}2^{nq}|\varphi_{n}|^{2}.
\]
The completion of $\mathcal{P}(\mathbb{R})$ in the norm $\|\cdot\|_{q,\kappa,\pi_{\lambda,\beta}}$
is denoted by $(H)_{q,\pi_{\lambda,\beta}}^{\kappa}$, so $\mathcal{P}(\mathbb{R})\hookrightarrow(H)_{q,\pi_{\lambda,\beta}}^{\kappa}$
densely. The space $(H)_{q,\pi_{\lambda,\beta}}^{\kappa}$ is a Hilbert
space with inner product given by 
\[
(\!(\varphi,\psi)\!)_{q,\pi_{\lambda,\beta}}^{\kappa}:=\sum_{n=0}^{\infty}(n!)^{1+\kappa}2^{nq}\varphi_{n}\bar{\psi}_{n},
\]
admitting the representation
\[
(H)_{q,\pi_{\lambda,\beta}}^{\kappa}:=\left\{ \varphi:\mathbb{R}\rightarrow\mathbb{C}\,\middle|\,\varphi=\sum_{n=0}^{\infty}\varphi_{n}C_{n}^{\lambda,\beta},\;\|\varphi\|_{q,\kappa,\pi_{\lambda,\beta}}^{2}=\sum_{n=0}^{\infty}(n!)^{1+\kappa}2^{nq}|\varphi_{n}|^{2}<\infty\right\} .
\]
 We also have that the inclusion $(H)_{q,\pi_{\lambda,\beta}}^{\kappa}\subset L^{2}(\pi_{\lambda,\beta})$
is dense which results from Remark \ref{rem:H2} on $\pi_{\lambda,\beta}$.
In this way we obtain the following dense chain of Hilbert spaces:
\begin{equation}
\dots\subset(H)_{q+1,\pi_{\lambda,\beta}}^{\kappa}\subset(H)_{q,\pi_{\lambda,\beta}}^{\kappa}\subset\dots\subset(H)_{0,\pi_{\lambda,\beta}}^{\kappa}\subset L^{2}(\pi_{\lambda,\beta}).\label{eq:c4}
\end{equation}
For $p>q$, the injection operator $I_{p,q}:(H)_{p,\pi_{\lambda,\beta}}^{\kappa}\longrightarrow(H)_{q,\pi_{\lambda,\beta}}^{\kappa}$
is Hilbert-Schmidt. In fact, the set $\left\{ \bar{C}_{n}^{\lambda,\beta}:=\left((n!)^{1+\kappa}2^{np}\right)^{-\frac{1}{2}}C_{n}^{\lambda,\beta}\,|\,n\in\mathbb{N}_{0}\right\} $
is a total orthonormal set in $(H)_{p,\pi_{\lambda,\beta}}^{\kappa}$.
Then the Hilbert-Schmidt norm of $I_{p,q}$ is given by 
\begin{align*}
\|I_{p,q}\|_{HS}^{2}=\sum_{n=0}^{\infty}\|I_{p,q}\bar{C}_{n}^{\lambda,\beta}\|_{q,\kappa,\pi_{\lambda,\beta}}^{2} & =\sum_{n=0}^{\infty}(n!)^{1+\kappa}2^{nq}\left((n!)^{1+\kappa}2^{np}\right)^{-1}=\sum_{n=0}^{\infty}\left(\frac{1}{2^{p-q}}\right)^{n}<\infty.
\end{align*}
Given $\kappa\in[0,1]$ the test function space associated with $\pi_{\lambda,\beta}$
is defined by 
\[
(N)_{\pi_{\lambda,\beta}}^{\kappa}:=\bigcap_{q=0}^{\infty}(H)_{q,\pi_{\lambda,\beta}}^{\kappa},
\]
which is a nuclear space.
\begin{example}
\label{exa:wick-exp-as-test-func}Let us consider the Wick exponential
$\mathrm{e}_{\pi_{\lambda,\beta}}(f(z),\cdot)$, $z\in\mathbb{C}$.
For $q\in\mathbb{N}_{0}$, 
\begin{align*}
\|\mathrm{e}_{\pi_{\lambda,\beta}}(f(z),\cdot)\|_{q,\kappa,\pi_{\lambda,\beta}}^{2} & =\sum_{n=0}^{\infty}(n!)^{1+\kappa}2^{nq}\frac{|z|^{2n}}{(n!)^{2}}=\sum_{n=0}^{\infty}\frac{1}{2^{n\kappa}}\frac{\left(2^{\kappa}2^{q}|z|^{2}\right)^{n}}{(n!)^{1-\kappa}}.
\end{align*}
If $\kappa=0,$ we have 
\[
\|\mathrm{e}_{\pi_{\lambda,\beta}}(f(z),\cdot)\|_{q,0,\pi_{\lambda,\beta}}^{2}=\exp(2^{q}|z|^{2})<\infty,\,\forall z\in\mathbb{C}.
\]
For $\kappa\in(0,1)$, we use H{\"o}lder's inequality with the pair
$(\frac{1}{\kappa},\frac{1}{1-\kappa})$ and obtain 
\begin{align*}
\|\mathrm{e}_{\pi_{\lambda,\beta}}(f(z),\cdot)\|_{q,\kappa,\pi_{\lambda,\beta}}^{2} & \leq\left(\sum_{n=0}^{\infty}\left(\frac{1}{2^{n\kappa}}\right)^{\frac{1}{\kappa}}\right)^{\kappa}\left(\sum_{n=0}^{\infty}\left(\frac{\left(2^{\kappa}2^{q}|z|^{2}\right)^{n}}{(n!)^{1-\kappa}}\right)^{\frac{1}{1-\kappa}}\right)^{1-\kappa}\\
 & =2^{\kappa}\exp\left((1-\kappa)2^{\frac{\kappa+q}{1-\kappa}}|z|^{\frac{2}{1-\kappa}}\right)<\infty,
\end{align*}
for all $z\in\mathbb{C}.$ Hence $\mathrm{e}_{\pi_{\lambda,\beta}}(f(z),\cdot)\in(N)_{\pi_{\lambda,\beta}}^{\kappa}$,
$\kappa\in[0,1)$. For $\kappa=1$ and $q\in\mathbb{N}_{0},$ we have
\[
\|\mathrm{e}_{\pi_{\lambda,\beta}}(f(z),\cdot)\|_{q,1,\pi_{\lambda,\beta}}^{2}=\sum_{n=0}^{\infty}(2^{q}|z|^{2})^{n}
\]
which is convergent if and only if $|z|<2^{-q/2}.$ Thus, $\mathrm{e}_{\pi_{\lambda,\beta}}(f(z),\cdot)\notin(N)_{\pi_{\lambda,\beta}}^{1}$,
$z\in\mathbb{C\setminus}\{0\}$. However, for each $q\in\mathbb{N}_{0},$
$\mathrm{e}_{\pi_{\lambda,\beta}}(f(z),\cdot)\in(H)_{q,\pi_{\lambda,\beta}}^{1}$
provided that $|z|<2^{-q/2}.$
\end{example}

\begin{prop}
\label{prop:continuous-extension}
\begin{enumerate}
\item Every test functions $\varphi\in(N)_{\pi_{\lambda,\beta}}^{1}$ has
a unique extension to the set $\mathbb{C}$ such that $\varphi$ is
an entire function of minimal type and order of growth one, that is,
$\varphi:\mathbb{C}\rightarrow\mathbb{C}$ is entire and for all $\varepsilon>0$
there is $K>0$ such that $|\varphi(z)|\leq K\mathrm{e}^{\varepsilon|z|}.$
\item For $\varepsilon>0$, there exists $\sigma_{\varepsilon}>0$ in which
$2^{q}>\sigma_{\varepsilon}^{-2}$, for $q\in\mathbb{N}_{0}$ and
$K'>0$ such that we have the following bound 
\[
|\varphi(z)|\leq K'\|\varphi\|_{q,1,\pi_{\lambda,\beta}}\mathrm{e}^{\varepsilon|z|},\quad z\in\mathbb{C}.
\]
\end{enumerate}
\end{prop}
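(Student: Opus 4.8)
The plan is to expand a test function $\varphi \in (N)_{\pi_{\lambda,\beta}}^{1}$ in the generalized Appell basis, $\varphi = \sum_{n=0}^{\infty} \varphi_n C_n^{\lambda,\beta}$, and bound the resulting series of polynomials pointwise at a complex argument $z$. The key external inputs are: the pointwise bound (P6) of Proposition~\ref{prop:gen-Appell-prop}, which gives for each $\varepsilon>0$ constants $C_\varepsilon, \sigma_\varepsilon>0$ with $|C_n^{\lambda,\beta}(x)| \le C_\varepsilon n! \sigma_\varepsilon^{-n} \mathrm{e}^{\varepsilon|x|}$ for real $x$; and the fact that each $C_n^{\lambda,\beta}$ is a polynomial of degree $n$, hence extends to an entire function on $\mathbb{C}$. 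I would first note that the bound in (P6) is obtained via a Cauchy estimate on $|z'|=\sigma_\varepsilon$ using only $|\mathrm{e}^{x f(z')}| \le \mathrm{e}^{|f(z')||x|} \le \mathrm{e}^{\varepsilon |x|}$, and that for $x$ replaced by a complex number $\zeta$ one has $|\mathrm{e}^{\zeta f(z')}| \le \mathrm{e}^{|\zeta||f(z')|} \le \mathrm{e}^{\varepsilon|\zeta|}$ just as well; so the same argument yields $|C_n^{\lambda,\beta}(\zeta)| \le C_\varepsilon n! \sigma_\varepsilon^{-n} \mathrm{e}^{\varepsilon|\zeta|}$ for all $\zeta \in \mathbb{C}$.

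Granting this complex version of (P6), I would estimate, for $z \in \mathbb{C}$ and any $q \in \mathbb{N}_0$,
\begin{align*}
\sum_{n=0}^{\infty} |\varphi_n|\, |C_n^{\lambda,\beta}(z)|
&\le C_\varepsilon \mathrm{e}^{\varepsilon|z|} \sum_{n=0}^{\infty} |\varphi_n|\, n!\, \sigma_\varepsilon^{-n} \\
&= C_\varepsilon \mathrm{e}^{\varepsilon|z|} \sum_{n=0}^{\infty} \left( |\varphi_n|\, n!\, 2^{nq/2} \right) \left( 2^{-nq/2} \sigma_\varepsilon^{-n} \right) \\
&\le C_\varepsilon \mathrm{e}^{\varepsilon|z|} \left( \sum_{n=0}^{\infty} (n!)^{2} 2^{nq} |\varphi_n|^{2} \right)^{1/2} \left( \sum_{n=0}^{\infty} 2^{-nq} \sigma_\varepsilon^{-2n} \right)^{1/2},
\end{align*}
where in the last step I apply the Cauchy--Schwarz inequality. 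The first factor on the right is exactly $\|\varphi\|_{q,1,\pi_{\lambda,\beta}}$, since the $\kappa=1$ norm weights $|\varphi_n|^2$ by $(n!)^{2} 2^{nq}$. The second factor is a geometric series that converges precisely when $2^{q}\sigma_\varepsilon^{2} > 1$, i.e. $2^{q} > \sigma_\varepsilon^{-2}$; given $\varepsilon$ (hence $\sigma_\varepsilon$), such $q$ exists, and calling its sum $(K')^2$ gives the bound of part~2, $|\varphi(z)| \le K' \|\varphi\|_{q,1,\pi_{\lambda,\beta}} \mathrm{e}^{\varepsilon|z|}$. This simultaneously shows the series $\sum \varphi_n C_n^{\lambda,\beta}(z)$ converges absolutely and locally uniformly on $\mathbb{C}$, so its sum is an entire function agreeing with $\varphi$ on $\mathbb{R}$; and since $\varepsilon>0$ was arbitrary and $K'$ depends only on $\varepsilon$ (once $\varphi \in (N)_{\pi_{\lambda,\beta}}^1 \subset (H)_{q,\pi_{\lambda,\beta}}^1$ makes the norm finite for every $q$), the extension is entire of order at most one and minimal type, proving part~1.

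The main obstacle, and the step deserving the most care, is justifying the complex-argument version of the bound (P6): one must check that the Cauchy-integral derivation in the proof of Proposition~\ref{prop:gen-Appell-prop}(P6) goes through verbatim with $x \in \mathbb{R}$ replaced by $\zeta \in \mathbb{C}$, since $\mathrm{e}_{\pi_{\lambda,\beta}}(f(z);\cdot)$ was defined there as a function on $\mathbb{R}$. Concretely, $C_n^{\lambda,\beta}(\zeta)$ is the $n$-th Taylor coefficient (times $n!$) of $z \mapsto \mathrm{e}^{\zeta f(z)}/E_\beta(\lambda z)$, which is analytic in $z$ on a fixed neighborhood of $0$ independent of $\zeta$ (the radius is governed only by the zeros of $E_\beta(\lambda z)$ and the singularity of $f$), so the Cauchy estimate on $|z|=\sigma_\varepsilon$ applies, and $|\mathrm{e}^{\zeta f(z)}| = \mathrm{e}^{\operatorname{Re}(\zeta f(z))} \le \mathrm{e}^{|\zeta||f(z)|} \le \mathrm{e}^{\varepsilon|\zeta|}$. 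Everything else is the routine Cauchy--Schwarz splitting and geometric-series bookkeeping above. One should also remark that the extension is \emph{unique}: two entire functions agreeing on $\mathbb{R}$ coincide by the identity theorem, which handles the uniqueness clause in part~1.
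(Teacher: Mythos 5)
Your proof is correct and follows essentially the same route as the paper: expand $\varphi$ in the $C_n^{\lambda,\beta}$ basis, apply the bound (P6) termwise, and use Cauchy--Schwarz with the weights $(n!)^2 2^{nq}$ against the geometric series $\sum 2^{-nq}\sigma_\varepsilon^{-2n}$ under the condition $2^q>\sigma_\varepsilon^{-2}$. In fact you are more careful than the paper on two points it passes over silently: justifying that the Cauchy-estimate derivation of (P6) remains valid when the real argument $x$ is replaced by a complex $\zeta$, and invoking the identity theorem for the uniqueness of the extension.
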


\begin{proof}
Let $\varphi\in(N)_{\pi_{\lambda,\beta}}^{1}$ be given. Then $\varphi$
can be expressed as
\[
\varphi=\sum_{n=0}^{\infty}\varphi_{n}C_{n}^{\lambda,\beta},\quad\varphi_{n}\in\mathbb{C},\;n\in\mathbb{N}_{0},
\]
with 
\[
\|\varphi\|_{q,1,\pi_{\lambda,\beta}}^{2}=\sum_{n=0}^{\infty}(n!)^{2}2^{nq}|\varphi_{n}|^{2}
\]
for all $q\in\mathbb{N}_{0}.$ Let $\varepsilon>0$ and $z\in\mathbb{C}$.
By Proposition \ref{prop:gen-Appell-prop}-(P6), there exist $C_{\varepsilon},\sigma_{\varepsilon}>0$
such that
\begin{align*}
|\varphi(z)|\leq\sum_{n=0}^{\infty}|\varphi_{n}C_{n}^{\lambda,\beta}(z)| & \leq\sum_{n=0}^{\infty}|\varphi_{n}||C_{n}^{\lambda,\beta}(z)|\leq\mathrm{e}^{\varepsilon|z|}C_{\varepsilon}\sum_{n=0}^{\infty}n!|\varphi_{n}|\sigma_{\varepsilon}^{-n}.
\end{align*}
Using H{\"o}lder's inequality, we have 
\begin{align}
|\varphi(z)| & \leq C_{\varepsilon}\mathrm{e}^{\varepsilon|z|}\left(\sum_{n=0}^{\infty}(n!)^{2}2^{nq}|\varphi_{n}|^{2}\right)^{1/2}\left(\sum_{n=0}^{\infty}2^{-nq}\sigma_{\varepsilon}^{-2n}\right)^{1/2}\nonumber \\
 & =C_{\varepsilon}(1-2^{-q}\sigma_{\varepsilon}^{-2})^{-1/2}\|\varphi\|_{q,1,\pi_{\lambda,\beta}}\mathrm{e}^{\varepsilon|z|},\label{eq:P6-holder-inequality}
\end{align}
for $2^{q}>\sigma_{\varepsilon}^{-2}$. By the Weierstrass M-test,
the series $\varphi(z)=\sum_{n=0}^{\infty}\varphi_{n}C_{n}^{\lambda,\beta}(z)$
converges uniformly and absolutely in any neighborhood of zero in
$\mathbb{C}.$ Since each term $\varphi_{n}C_{n}^{\lambda,\beta}(z)$
is entire in $z$, the uniform convergence implies that $z\mapsto\sum_{n=0}^{\infty}\varphi_{n}C_{n}^{\lambda,\beta}(z)$
is entire on $\mathbb{C}$. On the other hand, take $K'=C_{\varepsilon}(1-2^{-q}\sigma_{\varepsilon}^{-2})^{-1/2}$
in equation \eqref{eq:P6-holder-inequality} and so we obtain the
required bound
\[
|\varphi(z)|\leq K'\|\varphi\|_{q,1,\pi_{\lambda,\beta}}\mathrm{e}^{\varepsilon|z|},\quad z\in\mathbb{C}.
\]
This completes the proof.
\end{proof}

As the inclusion $(H)_{q,\pi_{\lambda,\beta}}^{\kappa}\hookrightarrow L^{2}(\pi_{\lambda,\beta})$
is dense, we may compute the dual of $(H)_{q,\pi_{\lambda,\beta}}^{\kappa}$
with respect to $L^{2}(\pi_{\lambda,\beta})$, that is, the functionals
are represented in terms of the inner product $(\!(\cdot,\cdot)\!)_{\pi_{\lambda,\beta}}$.
In the literature this process is known as rigged Hilbert spaces,
see for example \cite{BK88}. We are not going to reproduce this process
here. The resulting triplet of Hilbert spaces is 
\[
(H)_{q,\pi_{\lambda,\beta}}^{\kappa}\subset L^{2}(\pi_{\lambda,\beta})\subset(H)_{-q,\pi_{\lambda,\beta}}^{-\kappa}.
\]

The Hilbert space $(H)_{-q,\pi_{\lambda,\beta}}^{-\kappa}$ which
corresponds to the completion of $L^{2}(\pi_{\lambda,\beta})$ with
respect to the norm $\|\cdot\|_{-q,-\kappa,\pi_{\lambda,\beta}}$,
admit the following representation
\[
(H)_{-q,\pi_{\lambda,\beta}}^{-\kappa}=\left\{ \Phi=\sum_{n=0}^{\infty}\Phi_{n}Q_{n}^{\pi_{\lambda,\beta}},~\Phi_{n}\in\mathbb{C}\,\middle|\,\|\Phi\|_{-q,-\kappa,\pi_{\lambda,\beta}}^{2}=\sum_{n=0}^{\infty}(n!)^{1-\kappa}2^{-nq}|\Phi_{n}|^{2}<\infty\right\} .
\]
From the general theory of duality (see for example \cite{S71}) the
dual of $(N)_{\pi_{\lambda,\beta}}^{\kappa}$ with respect to $L^{2}(\pi_{\lambda,\beta})$
is given by 
\[
(N)_{\pi_{\lambda,\beta}}^{-\kappa}=\bigcup_{q=0}^{\infty}(H)_{-q,\pi_{\lambda,\beta}}^{-\kappa}.
\]
As a result we obtain the chain of continuous embeddings 
\[
(N)_{\pi_{\lambda,\beta}}^{\kappa}\subset\dots\subset(H)_{q,\pi_{\lambda,\beta}}^{\kappa}\subset\dots\subset L^{2}(\pi_{\lambda,\beta})\subset\dots\subset(H)_{-q,\pi_{\lambda,\beta}}^{-\kappa}\subset\dots\subset(\!N)_{\pi_{\lambda,\beta}}^{-\kappa}.
\]

\begin{example}[Generalized Radon-Nikodym derivative]
\label{exa:GRN-derivative}We want to define the generalized function
$\rho_{\pi_{\lambda,\beta}}(z,\cdot)\in(N)_{\pi_{\lambda,\beta}}^{-1},$
$z\in\mathbb{C}$, such that 
\begin{equation}
\langle\!\langle\varphi,\rho_{\pi_{\lambda,\beta}}(z,\cdot)\rangle\!\rangle_{\pi_{\lambda,\beta}}=\int_{\mathbb{R}}\varphi(x-z)\,\mathrm{d}\pi_{\lambda,\beta}(x),\quad\varphi\in(N)_{\pi_{\lambda,\beta}}^{1}.\label{5}
\end{equation}
Taking into account the result of Proposition \ref{prop:continuous-extension}--(2),
it turns out that 
\[
|\langle\!\langle\varphi,\rho_{\pi_{\lambda,\beta}}(z,\cdot)\rangle\!\rangle_{\pi_{\lambda,\beta}}|\leq K'\|\varphi\|_{q,1,\pi_{\lambda,\beta}}\mathrm{e}^{\varepsilon|z|}\int_{\mathbb{R}}\mathrm{e}^{\varepsilon|x|}\,\mathrm{d}\pi_{\lambda,\beta}(x).
\]
The integral on the right hand side is finite by Proposition \ref{prop:Analytic-LT-fPm}.
Thus, we have in fact $\rho_{\pi_{\lambda,\beta}}(z,\cdot)\in(N)_{\pi_{\lambda,\beta}}^{-1}.$
By Proposition \ref{prop:gen-Appell-prop}--(P4) and (P5), we have
\begin{align*}
\langle\!\langle C_{n}^{\lambda,\beta},\rho_{\pi_{\lambda,\beta}}(z,\cdot)\rangle\!\rangle_{\pi_{\lambda,\beta}} & =\int_{\mathbb{R}}C_{n}^{\lambda,\beta}(x-z)\,\mathrm{d}\pi_{\lambda,\beta}(x)\\
 & ={\displaystyle \sum_{k=0}^{n}\binom{n}{k}(-z)_{n-k}\int_{\mathbb{R}}}C_{k}^{\lambda,\beta}(x)\,\mathrm{d}\pi_{\lambda,\beta}(x)\\
 & =\sum_{k=0}^{n}\binom{n}{k}(-z)_{n-k}\delta_{k,0}\\
 & =(-z)_{n}\\
 & =\left\langle \!\!\!\left\langle C_{n}^{\lambda,\beta},\sum_{k=0}^{\infty}\frac{(-z)_{k}}{k!}Q_{k}^{\pi_{\lambda,\beta}}\right\rangle \!\!\!\right\rangle _{\pi_{\lambda,\beta}}
\end{align*}
where we used the biorthogonal property of $\mathbb{P}^{\pi_{\lambda,\beta}}$
and $\mathbb{Q}^{\pi_{\lambda,\beta}}$. This implies that $\rho_{\pi_{\lambda,\beta}}(-z,\cdot)$
has the following representation 
\begin{equation}
\rho_{\pi_{\lambda,\beta}}(-z,\cdot)=\sum_{k=0}^{\infty}\frac{(z)_{k}}{k!}Q_{k}^{\pi_{\lambda,\beta}}(\cdot).\label{6}
\end{equation}
In other words, $\rho_{\pi_{\lambda,\beta}}(-z,\cdot)$ is the generating
generalized function of the system $\mathbb{Q}^{\pi_{\lambda,\beta}}$.
\end{example}

\begin{example}[Delta distribution]
 For $z\in\mathbb{C},$ we define the distribution $\delta_{z}$
by the following $\mathbb{Q}^{\pi_{\lambda,\beta}}$-decomposition:
\[
\delta_{z}=\sum_{n=0}^{\infty}\frac{C_{n}^{\lambda,\beta}(z)}{n!}Q_{n}^{\pi_{\lambda,\beta}}.
\]
By Proposition \ref{prop:gen-Appell-prop}-(P6), given $\varepsilon>0,$
there exist $C_{\varepsilon},\sigma_{\varepsilon}>0$ such that 
\begin{align*}
\|\delta_{z}\|_{-q,-\kappa,\pi_{\lambda,\beta}}^{2} & =\sum_{n=0}^{\infty}(n!)^{-1-\kappa}2^{-nq}|C_{n}^{\lambda,\beta}(z)|^{2}\\
 & \leq C_{\varepsilon}^{2}\mathrm{e}^{2\varepsilon|z|}\sum_{n=0}^{\infty}2^{-nq}\sigma_{\varepsilon}^{-2n}(n!)^{1-\kappa}\\
 & \leq C_{\varepsilon}^{2}\mathrm{e}^{2\varepsilon|z|}\left(\sum_{n=0}^{\infty}\left(2^{-nq}\right)^{\frac{1}{2-\kappa}}\right)^{2-\kappa}\left(\sum_{n=0}^{\infty}\left(\frac{\sigma_{\varepsilon}^{-2n}}{(n!)^{\kappa-1}}\right)^{\frac{1}{\kappa-1}}\right)^{\kappa-1}\\
 & \leq C_{\varepsilon}^{2}\mathrm{e}^{2\varepsilon|z|}\left(\sum_{n=0}^{\infty}\left(\frac{1}{2^{q/2-\kappa}}\right)^{n}\right)^{2-\kappa}\exp\left((\kappa-1)\sigma_{\varepsilon}^{2/1-\kappa}\right)
\end{align*}
which is finite for $q\geq2-\kappa$, $\kappa\in[0,1)$. Hence, $\delta_{z}\in(H)_{-q,\pi_{\lambda,\beta}}^{-\kappa}\subset(N)_{\pi_{\lambda,\beta}}^{-\kappa}$,
$\kappa\in[0,1)$. Also, for $\kappa=1$, we have 
\begin{align*}
\|\delta_{z}\|_{-q,-1,\pi_{\lambda,\beta}}^{2} & =\sum_{n=0}^{\infty}(n!)^{-2}2^{-nq}|C_{n}^{\lambda,\beta}(z)|^{2}\\
 & \leq C_{\varepsilon}^{2}\mathrm{e}^{2\varepsilon|z|}\sum_{n=0}^{\infty}2^{-nq}\sigma_{\varepsilon}^{-2n},
\end{align*}
which is finite for sufficiently large $q\in\mathbb{N}.$ Thus, $\delta_{z}\in(N)_{\pi_{\lambda,\beta}}^{-1}.$
For $\varphi=\sum_{n=0}^{\infty}\varphi_{n}C_{n}^{\lambda,\beta}$,
the action of $\delta_{z}$ is given by 
\[
\langle\!\langle\delta_{z},\varphi\rangle\!\rangle_{\pi_{\lambda,\beta}}=\left\langle \!\!\!\left\langle \sum_{m=0}^{\infty}\varphi_{m}C_{m}^{\lambda,\beta},\sum_{n=0}^{\infty}\frac{C_{n}^{\lambda,\beta}(z)}{n!}Q_{n}^{\pi_{\lambda,\beta}}\right\rangle \!\!\!\right\rangle _{\pi_{\lambda,\beta}}=\sum_{n=0}^{\infty}\varphi_{n}C_{n}^{\lambda,\beta}(z)=\varphi(z)
\]
by the biorthogonal property of $\mathbb{P}^{\pi_{\lambda,\beta}}$
and $\mathbb{Q}^{\pi_{\lambda,\beta}}$. This implies that $\delta_{z}$
(in particular for $z$ real) plays the role of a ``$\delta$-function''
(evaluation map) in calculus.
\end{example}

\section{Characterization Theorems}

\label{sec:Characterization-Theorems}In this section, we define two
integrals transforms, called $S_{\pi_{\lambda,\beta}}$--transform
and convolution $C_{\pi_{\lambda,\beta}}$, which are used to characterize
the test function spaces $(N)_{\pi_{\lambda,\beta}}^{\kappa}$ and
generalized function spaces $(N)_{\pi_{\lambda,\beta}}^{-\kappa}$.
The \emph{$S_{\pi_{\lambda,\beta}}$--transform} of $\varphi\in(N)_{\pi_{\lambda,\beta}}^{\kappa}$
is defined by 
\[
(S_{\pi_{\lambda,\beta}}\varphi)(z):=\int_{\mathbb{R}}\varphi(x)\mathrm{e}_{\pi_{\lambda,\beta}}(z,x)\,\mathrm{d}\pi_{\lambda,\beta}(x).
\]
The $S_{\pi_{\lambda,\beta}}$--transform may be extended to $\Phi\in(N)_{\pi_{\lambda,\beta}}^{-\kappa}$
(in a consistent way) as follows
\[
(S_{\pi_{\lambda,\beta}}\Phi)(z):=\langle\!\langle\mathrm{e}_{\pi_{\lambda,\beta}}(z;\cdot),\Phi\rangle\!\rangle_{\pi_{\lambda,\beta}}.
\]
Note that for $\kappa=1$, $S_{\pi_{\lambda,\beta}}\Phi$ is defined
only in a neighborhood of the zero in $\mathbb{C}$, because if $\Phi\in(H)_{-q,\pi_{\lambda,\beta}}^{-1}$,
then $\mathrm{e}_{\pi_{\lambda,\beta}}(z;\cdot)\in(H)_{q,\pi_{\lambda,\beta}}^{1}$
for $z\in\mathbb{C}$ such that $|z|<2^{-q/2}$ as shown in Example~\ref{exa:wick-exp-as-test-func}.
Now, we introduce the second integral transform which is more appropriate
to characterize the test function spaces $(N)_{\pi_{\lambda,\beta}}^{\kappa}$.
The \emph{convolution} $C_{\pi_{\lambda,\beta}}$ of $\varphi\in(N)_{\pi_{\lambda,\beta}}^{\kappa}$
is defined by 
\[
(C_{\pi_{\lambda,\beta}}\varphi)(z):=\int_{\mathbb{R}}\varphi(x+z)\,\mathrm{d}\pi_{\lambda,\beta}(x)
\]
and using Example~\ref{exa:GRN-derivative} may be written as
\[
(C_{\pi_{\lambda,\beta}}\varphi)(z)=\langle\!\langle\varphi,\rho_{\pi_{\lambda,\beta}}(-z,\cdot)\rangle\!\rangle_{\pi_{\lambda,\beta}}.
\]
In Gaussian analysis, the convolution $C_{\pi_{\lambda,\beta}}$ and
the $S_{\pi_{\lambda,\beta}}$--transform coincide, however these
two transformations are different in the fractional Poisson analysis,
or more generally, in non-Gaussian analysis.

\subsection{Characterization of Test Functions}

For every $l\in\mathbb{N}_{0}$, we denote by $\mathcal{E}_{2^{-l}}^{k}(\mathbb{C})$
the set of entire functions of order of growth $k\in[1,2]$ and type
$2^{-l}$, i.e., 
\[
\mathcal{E}_{2^{-l}}^{k}(\mathbb{C})=\big\{ u:\mathbb{C}\rightarrow\mathbb{C}~\text{is entire}\mid|u(z)|\leq C\exp(2^{-l}|z|^{k}),C>0\big\}.
\]
For any $l\in\mathbb{N}_{0},$ the map 
\[
|\cdot|_{l,k}:\mathcal{E}_{2^{-l}}^{k}(\mathbb{C})\rightarrow\mathbb{R}_{+},~u\mapsto|u|_{l,k}:=\sup_{z\in\mathbb{C}}\big\{|u(z)|\exp(-2^{-l}|z|^{k})\big\},
\]
is a norm in $\mathcal{E}_{2^{-l}}^{k}(\mathbb{C})$ and $(\mathcal{E}_{2^{-l}}^{k}(\mathbb{C}),|\cdot|_{l,k})$
is a Banach space. The space of entire functions of minimal type and
order of growth $k$ is defined by 
\[
\mathcal{E}_{\min}^{k}(\mathbb{C}):=\bigcap_{l\in\mathbb{N}_{0}}\mathcal{E}_{2^{-l}}^{k}(\mathbb{C}).
\]
Any entire function $u$ can be represented in Taylor series in the
form 
\[
u(z)=\sum_{n=0}^{\infty}u_{n}z^{n},~~~u_{n}=\frac{1}{n!}\frac{d^{n}}{dz^{n}}u(z)\bigg|_{z=0},\quad z\in\mathbb{C}.
\]
Let us consider the family $(\vertiii{\cdot}{}_{q,\kappa})_{q\in\mathbb{N}_{0}},$
$\kappa\in[0,1]$ of Hilbert norms in $\mathcal{E}_{\min}^{k}(\mathbb{C})$,
defined by 
\[
\vertiii{u}_{q,\kappa}^{2}:=\sum_{n=0}^{\infty}(n!)^{1+\kappa}2^{nq}|u_{n}|^{2}<\infty,\quad u\in\mathcal{E}_{\min}^{k}(\mathbb{C}).
\]
This family of Hilbert norms is equivalent to the family of norms
$(|\cdot|{}_{l,k})_{l\in\mathbb{N}_{0}},$ see Appendix \ref{sec:Equivalent-norms}.
\begin{thm}
\label{thm:Convolution-N1-E1min}The convolution $C_{\pi_{\lambda,\beta}}$
is a homeomorphism between the test function space $(N)_{\pi_{\lambda,\beta}}^{1}$
and the space $\mathcal{E}_{\min}^{1}(\mathbb{C})$ of entire functions
of minimal type and order of growth one.
\end{thm}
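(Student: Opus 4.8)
The plan is to work entirely on the level of the $C_n^{\lambda,\beta}$-coefficient expansions and to transport the problem to the space of Taylor coefficients of entire functions, where the Hilbert norms $\vertiii{\cdot}_{q,1}$ live. First I would compute the convolution of a generalized Appell polynomial: using Proposition~\ref{prop:gen-Appell-prop}--(P4) together with (P5) exactly as in Example~\ref{exa:GRN-derivative}, one gets
\[
(C_{\pi_{\lambda,\beta}}C_n^{\lambda,\beta})(z)=\int_{\mathbb{R}}C_n^{\lambda,\beta}(x+z)\,\mathrm{d}\pi_{\lambda,\beta}(x)=(z)_n,
\]
the $n$th falling factorial. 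Hence for $\varphi=\sum_{n}\varphi_n C_n^{\lambda,\beta}\in(N)_{\pi_{\lambda,\beta}}^1$ we obtain the formal identity $(C_{\pi_{\lambda,\beta}}\varphi)(z)=\sum_{n=0}^\infty\varphi_n (z)_n$. The key algebraic point is then to re-expand the falling factorials in the monomial basis via the Stirling numbers of the first kind, $(z)_n=\sum_{m=0}^n s(n,m)z^m$, so that the Taylor coefficients of $C_{\pi_{\lambda,\beta}}\varphi$ are $u_m=\sum_{n\ge m}\varphi_n s(n,m)$, and conversely (using the Stirling numbers of the second kind $S(n,m)$, which invert the $s(n,m)$) $\varphi_n=\sum_{m\ge n}u_m S(m,n)$. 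This exhibits $C_{\pi_{\lambda,\beta}}$ as an explicit, invertible triangular-type change of basis between the coefficient sequence of $\varphi$ and the Taylor coefficients of an entire function.

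\textbf{Main steps.} (1) Show $C_{\pi_{\lambda,\beta}}$ maps into $\mathcal{E}_{\min}^1(\mathbb{C})$: given $\varphi\in(N)_{\pi_{\lambda,\beta}}^1$, Proposition~\ref{prop:continuous-extension}--(1) already gives that $\varphi$ extends to an entire function of minimal type and order one, and the bound in part~(2), combined with $\int_{\mathbb{R}}\mathrm{e}^{\varepsilon|x|}\,\mathrm{d}\pi_{\lambda,\beta}(x)<\infty$ from Proposition~\ref{prop:Analytic-LT-fPm}, shows $|(C_{\pi_{\lambda,\beta}}\varphi)(z)|\le K'\|\varphi\|_{q,1,\pi_{\lambda,\beta}}\mathrm{e}^{\varepsilon|z|}\int_{\mathbb{R}}\mathrm{e}^{\varepsilon|x|}\,\mathrm{d}\pi_{\lambda,\beta}(x)$ for every $\varepsilon>0$; hence $C_{\pi_{\lambda,\beta}}\varphi\in\mathcal{E}_{\min}^1(\mathbb{C})$ and, moreover, $C_{\pi_{\lambda,\beta}}$ is continuous from $(H)_{q,\pi_{\lambda,\beta}}^1$ into the Banach space $(\mathcal{E}_{2^{-l}}^1(\mathbb{C}),|\cdot|_{l,1})$ for a suitable $l=l(q)$. (2) Using the equivalence of the norm families $(|\cdot|_{l,1})_l$ and $(\vertiii{\cdot}_{q,1})_q$ on $\mathcal{E}_{\min}^1(\mathbb{C})$ established in Appendix~\ref{sec:Equivalent-norms}, it suffices to estimate $\vertiii{C_{\pi_{\lambda,\beta}}\varphi}_{q,1}$ in terms of some $\|\varphi\|_{p,1,\pi_{\lambda,\beta}}$, and vice versa, through the Stirling transforms $u_m=\sum_n s(n,m)\varphi_n$ and $\varphi_n=\sum_m S(m,n)u_m$. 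Here one uses the classical growth bounds on the Stirling numbers — $|s(n,m)|\le\binom{n}{m}(n-1)!/(m-1)!$ and $S(n,m)\le\binom{n}{m}m^{n-m}$, or more conveniently the generating-function bounds $\sum_n |s(n,m)|\,t^n/n! = (-\log(1-t))^m/m!$ and $\sum_n S(n,m)\,t^n/n!=(\mathrm{e}^t-1)^m/m!$ — to control $\sum_m (m!)^2 2^{mq}|u_m|^2$ by $\sum_n (n!)^2 2^{np}|\varphi_n|^2$ for $p$ large enough depending on $q$, and symmetrically. (3) Invertibility: since the two Stirling matrices are mutually inverse, the map $\varphi\mapsto C_{\pi_{\lambda,\beta}}\varphi$ is a bijection of coefficient sequences, and the two-sided norm estimates from step~(2) show both $C_{\pi_{\lambda,\beta}}$ and its inverse are continuous on the projective limits, giving the homeomorphism $(N)_{\pi_{\lambda,\beta}}^1\cong\mathcal{E}_{\min}^1(\mathbb{C})$. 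Surjectivity in particular follows by taking any $u=\sum_m u_m z^m\in\mathcal{E}_{\min}^1(\mathbb{C})$, defining $\varphi_n:=\sum_{m\ge n}S(m,n)u_m$, and checking via the same estimates that $\varphi:=\sum_n\varphi_n C_n^{\lambda,\beta}\in(N)_{\pi_{\lambda,\beta}}^1$ with $C_{\pi_{\lambda,\beta}}\varphi=u$.

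\textbf{Main obstacle.} The delicate part is step~(2): verifying that the Stirling-number transforms are bounded operators between the weighted $\ell^2$ spaces with weights $(n!)^2 2^{nq}$, with the shift $q\mapsto p$ absorbing the combinatorial growth of $s(n,m)$ and $S(n,m)$. The factorial weights $(n!)^{1+\kappa}$ with $\kappa=1$ are exactly at the borderline where $(\mathrm{e}^t-1)$ and $-\log(1-t)$ have finite radius of convergence, so the estimates must be done carefully — this is precisely why $C_{\pi_{\lambda,\beta}}\varphi$ is only of \emph{order} one (not a smaller order) and why the target is $\mathcal{E}_{\min}^1(\mathbb{C})$ rather than a space of faster-decaying type; the norm-equivalence lemma of Appendix~\ref{sec:Equivalent-norms} is what makes these Hilbertian estimates legitimate substitutes for the sup-norm conditions defining $\mathcal{E}_{2^{-l}}^1(\mathbb{C})$. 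Everything else — the computation $(C_{\pi_{\lambda,\beta}}C_n^{\lambda,\beta})(z)=(z)_n$, the entire extension, and the bijectivity at the level of formal coefficients — is essentially bookkeeping built on results already in hand.
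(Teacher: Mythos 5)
Your proposal is correct and follows essentially the same route as the paper: compute $(C_{\pi_{\lambda,\beta}}C_n^{\lambda,\beta})(z)=(z)_n$, pass to Taylor coefficients via the mutually inverse Stirling transforms $u_m=\sum_n s(n,m)\varphi_n$ and $\varphi_n=\sum_m S(m,n)u_m$, bound the Stirling numbers through Cauchy estimates on their generating functions $(\log(1+z))^k$ and $(\mathrm{e}^z-1)^k$, and invoke the norm equivalence of Appendix \ref{sec:Equivalent-norms} to get two-sided continuity and hence the homeomorphism. The only cosmetic difference is that you additionally invoke Proposition \ref{prop:continuous-extension} for membership in $\mathcal{E}_{\min}^{1}(\mathbb{C})$, whereas the paper extracts everything from the coefficient estimates directly.
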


\begin{proof}
Let $\varphi\in(N)_{\pi_{\lambda,\beta}}^{1}$ be of the form $\varphi=\sum_{n=0}^{\infty}\varphi_{n}C_{n}^{\lambda,\beta}$
with 
\[
\|\varphi\|_{q,1,\pi_{\lambda,\beta}}^{2}=\sum_{n=0}^{\infty}(n!)^{2}2^{nq}|\varphi_{n}|^{2}<\infty
\]
for each $q\in\mathbb{N}_{0}$. So we have 
\[
|\varphi_{n}|\leq(n!)^{-1}2^{-nq/2}\|\varphi\|_{q,1,\pi_{\lambda,\beta}}.
\]
It follows from Example~\ref{exa:GRN-derivative} that
\[
(C_{\pi_{\lambda,\beta}}\varphi)(z)=\sum_{n=0}^{\infty}\varphi_{n}(z)_{n}=\sum_{n=0}^{\infty}\varphi_{n}\sum_{k=0}^{n}s(n,k)z^{k}=\sum_{k=0}^{\infty}\sum_{n=0}^{\infty}\varphi_{n}s(n,k)z^{k},
\]
where in the second equality we use Proposition \ref{prop:Stirling-falling-factorial}--1.
From Proposition~\ref{prop:Stirling-numbers-explicit}--1, we have
\[
s(n,k)=\frac{A_{n}^{k}}{k!}.
\]
For $\varepsilon>0$, we define 
\[
F_{\varepsilon}:=\sup_{|z|=\varepsilon}|f(z)|,
\]
where $f(z)=\log(1+z)$, so that by Cauchy inequality
\begin{align*}
|s(n,k)| & \le\frac{1}{k!}\sum_{l_{1}+\dots+l_{k}=n}\frac{n!}{l_{1}!\dots l_{k}!}|f^{(l_{1})}(0)|\dots|f^{(l_{k})}(0)|\\
 & \leq\frac{1}{k!}\sum_{l_{1}+\dots+l_{k}=n}\frac{n!l_{1}!\dots l_{k}!}{l_{1}!\dots l_{k}!}F_{\varepsilon}^{k}\varepsilon^{-n}\\
 & \leq\frac{n!}{k!}F_{\varepsilon}^{k}2^{n}\varepsilon^{-n}.
\end{align*}
Now, we estimate $\sum_{n=0}^{\infty}\varphi_{n}s(n,k)$ as follows
\begin{align*}
\Bigg|\sum_{n=0}^{\infty}\varphi_{n}s(n,k)\Bigg| & \leq\sum_{n=0}^{\infty}|\varphi_{n}||s(n,k)|\\
 & \leq\sum_{n=0}^{\infty}(n!)^{-1}2^{-nq/2}\|\varphi\|_{q,1,\pi_{\lambda,\beta}}\frac{n!}{k!}F_{\varepsilon}^{k}2^{n}\varepsilon^{-n}\\
 & =\frac{\|\varphi\|_{q,1,\pi_{\lambda,\beta}}F_{\varepsilon}^{k}}{k!}\sum_{n=0}^{\infty}\left(2^{\frac{2-q}{2}}\varepsilon^{-1}\right)^{n}\\
 & =\frac{\|\varphi\|_{q,1,\pi_{\lambda,\beta}}F_{\varepsilon}^{k}}{k!}(1-2^{\frac{2-q}{2}}\varepsilon^{-1})^{-1}
\end{align*}
for $2^{\frac{2-q}{2}}\varepsilon^{-1}<1$. It follows that
\begin{align*}
\vertiii{(C_{\pi_{\lambda,\beta}}\varphi)(z)}_{q,1}^{2} & \leq\sum_{k=0}^{\infty}(k!)^{2}2^{kq}\frac{\|\varphi\|_{q,1,\pi_{\lambda,\beta}}^{2}F_{\varepsilon}^{2k}}{(k!)^{2}}(1-2^{\frac{2-q}{2}}\varepsilon^{-1})^{-2}\\
 & =\|\varphi\|_{q,1,\pi_{\lambda,\beta}}^{2}(1-2^{\frac{2-q}{2}}\varepsilon^{-1})^{-2}\sum_{k=0}^{\infty}(2^{q}F_{\varepsilon}^{2})^{k}\\
 & =\|\varphi\|_{q,1,\pi_{\lambda,\beta}}^{2}(1-2^{\frac{2-q}{2}}\varepsilon^{-1})^{-2}(1-2^{q}F_{\varepsilon}^{2})^{-1},
\end{align*}
where $\varepsilon>0$ is such that $2^{q}F_{\varepsilon}^{2}<1$.
So $C_{\pi_{\lambda,\beta}}$ is continuous.

Conversely, let $u\in\mathcal{E}_{\min}^{k}(\mathbb{C})$ be given
such that 
\begin{align*}
u(z) & =\sum_{n=0}^{\infty}u_{n}z^{n}
\end{align*}
with
\[
\vertiii{u}_{q,1}^{2}=\sum_{n=0}^{\infty}(n!)^{2}2^{nq}|u_{n}|^{2}<\infty
\]
for each $q\in\mathbb{N}_{0}$. Hence, 
\[
|u_{n}|\leq(n!)^{-1}2^{-nq/2}\vertiii{u}_{q,1}.
\]
Note that 
\[
u(z)=\sum_{n=0}^{\infty}u_{n}z^{n}=\sum_{n=0}^{\infty}u_{n}\sum_{k=0}^{n}S(n,k)(z)_{k}=\sum_{k=0}^{\infty}\sum_{n=0}^{\infty}u_{n}S(n,k)(z)_{k}
\]
by Proposition \ref{prop:Stirling-falling-factorial}--2. From Proposition~\ref{prop:Stirling-numbers-explicit}--2,
we have 
\[
S(n,k)=\frac{B_{n}^{k}}{k!}.
\]
For $\varepsilon>0$, we define 
\[
G_{\varepsilon}:=\sup_{|z|=\varepsilon}|g(z)|
\]
where $g(z)=\mathrm{e}^{z}-1$ such that by Cauchy inequality
\begin{align*}
|S(n,k)| & \leq\frac{1}{k!}\sum_{l_{1}+\dots+l_{k}=n}\frac{n!}{l_{1}!\dots l_{k}!}|g^{(l_{1})}(0)|\dots|g^{(l_{k})}(0)|\\
 & \leq\frac{1}{k!}\sum_{l_{1}+\dots+l_{k}=n}\frac{n!l_{1}!\dots l_{k}!}{l_{1}!\dots l_{k}!}G_{\varepsilon}^{k}\varepsilon^{-n}\\
 & \leq\frac{n!}{k!}G_{\varepsilon}^{k}2^{n}\varepsilon^{-n}.
\end{align*}
Now, we estimate $\sum_{n=0}^{\infty}u_{n}S(n,k)$ as follows 
\begin{align*}
\Bigg|\sum_{n=0}^{\infty}u_{n}S(n,k)\Bigg| & \leq\sum_{n=0}^{\infty}|u_{n}|S(n,k)\\
 & \leq\sum_{n=0}^{\infty}(n!)^{-1}2^{-nq/2}\vertiii{u}_{q,1}\frac{n!}{k!}G_{\varepsilon}^{k}2^{n}\varepsilon^{-n}\\
 & \leq\frac{\vertiii{u}_{q,1}G_{\varepsilon}^{k}}{k!}\sum_{n=0}^{\infty}\left(2^{\frac{2-q}{2}}\varepsilon^{-1}\right)^{n}\\
 & =\frac{\vertiii{u}_{q,1}G_{\varepsilon}^{k}}{k!}(1-2^{\frac{2-q}{2}}\varepsilon^{-1})^{-1}
\end{align*}
for $2^{\frac{2-q}{2}}\varepsilon^{-1}<1$. Define $\psi$ of the
form 
\[
\psi=\sum_{k=0}^{\infty}\psi_{k}C_{k}^{\lambda,\beta}
\]
where $\psi_{k}=\sum_{n=0}^{\infty}u_{n}S(n,k)$. Then we have 
\begin{align}
\|\psi\|_{q,1,\pi_{\lambda,\beta}}^{2} & \leq\sum_{k=0}^{\infty}(k!)^{2}2^{kq}\frac{\vertiii{u}_{q,1}^{2}G_{\varepsilon}^{2k}}{(k!)^{2}}(1-2^{\frac{2-q}{2}}\varepsilon^{-1})^{-2}\nonumber \\
 & =\vertiii{u}_{q,1}^{2}(1-2^{\frac{2-q}{2}}\varepsilon^{-1})^{-2}\sum_{k=0}^{\infty}(2^{q}G_{\varepsilon}^{2})^{k}\nonumber \\
 & =\vertiii{u}_{q,1}^{2}(1-2^{\frac{2-q}{2}}\varepsilon^{-1})^{-2}(1-2^{q}G_{\varepsilon}^{2})^{-1},\label{eq:psi-u-inequality}
\end{align}
where $\varepsilon>0$ is such that $2^{q}G_{\varepsilon}^{2}<1$.
Hence, $\psi\in(N)_{\pi_{\lambda,\beta}}^{1}$ and $C_{\pi_{\lambda,\beta}}\psi=u$.
Moreover, if $C_{\pi_{\lambda,\beta}}\psi=0$, equation \eqref{eq:psi-u-inequality}
implies that $\psi=0$ and so $C_{\pi_{\lambda,\beta}}$ is one-to-one.
On the other hand, we have $C_{\pi_{\lambda,\beta}}^{-1}u=\psi$ which
implies $C_{\pi_{\lambda,\beta}}$ is onto. Now, it follows that
\[
\|C_{\pi_{\lambda,\beta}}^{-1}u\|_{q,1,\pi_{\lambda,\beta}}\le\vertiii{u}_{q,1}^{2}(1-2^{\frac{2-q}{2}}\varepsilon^{-1})^{-2}(1-2^{q}G_{\varepsilon}^{2})^{-1}<\infty
\]
which proves the continuity of $C_{\pi_{\lambda,\beta}}^{-1}$.
\end{proof}

\subsection{Characterization of $(N)_{\pi_{\lambda,\beta}}^{-\kappa}$, $\kappa\in[0,1)$}

For every $l\in\mathbb{N}_{0}$, we denote by $\mathcal{E}_{2^{l}}^{k'}(\mathbb{C})$
the set of entire functions of growth $k'\in[2,\infty)$ and type
$2^{l}$, i.e., 
\[
\mathcal{E}_{2^{l}}^{k'}(\mathbb{C})=\big\{ U:\mathbb{C}\rightarrow\mathbb{C}~\text{is entire}\mid|U(z)|\leq C\exp(2^{l}|z|^{k'}),~C>0\big\}.
\]
For all natural $l$, the map 
\[
|\cdot|_{l,k'}:\mathcal{E}_{2^{l}}^{k'}(\mathbb{C})\rightarrow\mathbb{R},U\mapsto|U|_{l,k}:=\sup_{z\in\mathbb{C}}\big\{|U(z)|\exp(-2^{l}|z|^{k'})\big\}
\]
is a norm in $\mathcal{E}_{2^{l}}^{k'}(\mathbb{C})$ and $\big(\mathcal{E}_{2^{l}}^{k'}(\mathbb{C}),|\cdot|_{l,k'}\big)$
is a Banach space. The space of entire functions of maximal type and
order of growth $k'$, is defined by 
\[
\mathcal{E}_{\max}^{k'}(\mathbb{C}):=\bigcup_{l\in\mathbb{N}_{0}}\mathcal{E}_{2^{l}}^{k'}(\mathbb{C}).
\]
Taking into account Taylor's representation around the origin of $U\in\mathcal{E}_{\max}^{k'}(\mathbb{C})$,
we can consider in $\mathcal{E}_{\max}^{k'}(\mathbb{C})$ the family
$(\vertiii{\cdot}_{-q,-\kappa})_{q\in\mathbb{\mathbb{N}}_{0}},$ $\kappa\in[0,1)$,
of Hilbert norms, given by 
\[
\vertiii{U}_{-q,-\kappa}^{2}=\sum_{n=0}^{\infty}(n!)^{1-\beta}2^{-nq}|U_{n}|^{2}<\infty,
\]
which is equivalent to the family of norms $(|\cdot|_{l,k'})_{l\in\mathbb{N}_{0}}$,
see Appendix \ref{sec:Equivalent-norms}.
\begin{thm}
\label{thm:SqEmax}The $S_{\pi_{\lambda,\beta}}$-transform is a homeomorphism
between the generalized function space $(N)_{\pi_{\lambda,\beta}}^{-\kappa}$,
$\kappa\in[0,1)$ and the space $\mathcal{E}_{\max}^{k'}(\mathbb{C})$
of entire functions of maximal type and order of growth $k'=\frac{2}{1-\beta}$.
\end{thm}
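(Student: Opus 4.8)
The plan is to reduce the theorem, exactly as in the proof of Theorem~\ref{thm:Convolution-N1-E1min}, to a claim about $\mathbb{Q}^{\pi_{\lambda,\beta}}$--coefficient sequences, and then to read off the homeomorphism from the norm equivalence of Appendix~\ref{sec:Equivalent-norms}. First I would compute the transform explicitly. For $\Phi=\sum_{n=0}^{\infty}\Phi_{n}Q_{n}^{\pi_{\lambda,\beta}}$, the generating identity $\mathrm{e}_{\pi_{\lambda,\beta}}(f(z);\cdot)=\sum_{n=0}^{\infty}\frac{z^{n}}{n!}C_{n}^{\lambda,\beta}$ of \eqref{eq:Wick-expo} together with the biorthogonality $\langle\!\langle C_{n}^{\lambda,\beta},Q_{m}^{\pi_{\lambda,\beta}}\rangle\!\rangle_{\pi_{\lambda,\beta}}=n!\,\delta_{n,m}$ from Theorem~\ref{thm:biorthogonal-property} gives
\[
(S_{\pi_{\lambda,\beta}}\Phi)(z)=\sum_{n=0}^{\infty}\Phi_{n}z^{n},
\]
so that the Taylor coefficients of $S_{\pi_{\lambda,\beta}}\Phi$ are precisely the $\Phi_{n}$. (With the raw variable one gets $\langle\!\langle\mathrm{e}_{\pi_{\lambda,\beta}}(z;\cdot),\Phi\rangle\!\rangle_{\pi_{\lambda,\beta}}=\sum_{n}\Phi_{n}(\mathrm{e}^{z}-1)^{n}$; since $\mathrm{e}^{z}-1$ is entire and biholomorphic near $0$, I would carry the argument in the chart $w=\mathrm{e}^{z}-1$, which is the one in which Example~\ref{exa:wick-exp-as-test-func} was computed and in which the transform is an honest power series.) Conversely, to a power series $u=\sum_{n}u_{n}z^{n}$ one associates $\Phi:=\sum_{n}u_{n}Q_{n}^{\pi_{\lambda,\beta}}$, which is visibly a two-sided inverse of $S_{\pi_{\lambda,\beta}}$.

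Granting the explicit formula, the defining Hilbert norms coincide term by term,
\[
\vertiii{S_{\pi_{\lambda,\beta}}\Phi}_{-q,-\kappa}^{2}=\sum_{n=0}^{\infty}(n!)^{1-\kappa}2^{-nq}|\Phi_{n}|^{2}=\|\Phi\|_{-q,-\kappa,\pi_{\lambda,\beta}}^{2},\qquad q\in\mathbb{N}_{0},
\]
so for each fixed $q$ the map $S_{\pi_{\lambda,\beta}}$ is an isometric isomorphism of $(H)_{-q,\pi_{\lambda,\beta}}^{-\kappa}$ onto the Hilbert space of entire functions equipped with $\vertiii{\cdot}_{-q,-\kappa}$. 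Since, by Appendix~\ref{sec:Equivalent-norms}, the family $(\vertiii{\cdot}_{-q,-\kappa})_{q}$ is equivalent to the type-norm family $(|\cdot|_{l,k'})_{l}$ on the spaces $\mathcal{E}_{2^{l}}^{k'}(\mathbb{C})$ with $k'=\tfrac{2}{1-\kappa}$, the two increasing chains of Hilbert/Banach spaces are mutually cofinal; passing to inductive limits, $S_{\pi_{\lambda,\beta}}$ becomes a linear bijection from $(N)_{\pi_{\lambda,\beta}}^{-\kappa}=\bigcup_{q}(H)_{-q,\pi_{\lambda,\beta}}^{-\kappa}$ onto $\mathcal{E}_{\max}^{k'}(\mathbb{C})=\bigcup_{l}\mathcal{E}_{2^{l}}^{k'}(\mathbb{C})$ that is continuous with continuous inverse, i.e.\ a homeomorphism.

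The real content, and what I expect to be the main obstacle, is the order-of-growth bookkeeping behind Appendix~\ref{sec:Equivalent-norms}: one must verify that a power series $u(z)=\sum_{n}u_{n}z^{n}$ satisfies $\sum_{n}(n!)^{1-\kappa}2^{-nq}|u_{n}|^{2}<\infty$ for some $q$ if and only if $u$ is entire of \emph{maximal} type and order of growth exactly $k'=\tfrac{2}{1-\kappa}$. One direction is a Cauchy-estimate plus Stirling argument: from $|u_{n}|\leq C\,r^{-n}\exp(2^{l}r^{k'})$, optimising over $r>0$ yields $|u_{n}|\lesssim (n!)^{-1/k'}c^{n}$, and since $2/k'=1-\kappa$ this gives $(n!)^{1-\kappa}|u_{n}|^{2}\lesssim c^{2n}$, summable once $2^{-q}<c^{-2}$. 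The converse uses $\sup_{z}|u(z)|\mathrm{e}^{-2^{l}|z|^{k'}}\leq\sum_{n}|u_{n}|\sup_{z}\big(|z|^{n}\mathrm{e}^{-2^{l}|z|^{k'}}\big)$ together with the asymptotics $\sum_{n}t^{n}/(n!)^{(1-\kappa)/2}\asymp\exp\!\big(c\,t^{2/(1-\kappa)}\big)$. I would also record, as minor consistency checks, that the argument degenerates correctly at the endpoint $\kappa=0$ (then $k'=2$ and $(H)_{0,\pi_{\lambda,\beta}}^{0}=L^{2}(\pi_{\lambda,\beta})$), that $\kappa<1$ enters only through convergence of these factorial series, and that the extension $(S_{\pi_{\lambda,\beta}}\Phi)(z)=\langle\!\langle\mathrm{e}_{\pi_{\lambda,\beta}}(z;\cdot),\Phi\rangle\!\rangle_{\pi_{\lambda,\beta}}$ is well defined for every $z\in\mathbb{C}$ because $\mathrm{e}_{\pi_{\lambda,\beta}}(f(z);\cdot)\in(N)_{\pi_{\lambda,\beta}}^{\kappa}$ for all $z$ by Example~\ref{exa:wick-exp-as-test-func}.
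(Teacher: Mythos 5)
Your reduction to coefficient sequences is sound, and your observation that $\Phi\mapsto\sum_{n}\Phi_{n}z^{n}$ is a term-by-term isometry between $(H)_{-q,\pi_{\lambda,\beta}}^{-\kappa}$ and the Hilbert space defined by $\vertiii{\cdot}_{-q,-\kappa}$ is actually cleaner than what the paper does: the paper's proof is a single sentence (``analogous to the previous theorem, using Proposition~\ref{prop:p18}''), and the genuinely analogous route through Theorem~\ref{thm:Convolution-N1-E1min} would re-expand $(\mathrm{e}^{z}-1)^{n}$ in powers of $z$ via Stirling numbers of the second kind and estimate the resulting coefficients. Your isometry, combined with the norm equivalence of Proposition~\ref{prop:p18} and passage to inductive limits, is a legitimate shortcut, and the growth bookkeeping you defer is exactly the content of that proposition (note in passing that the exponent in the statement should read $k'=\tfrac{2}{1-\kappa}$, not $\tfrac{2}{1-\beta}$).

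The gap is in the parenthetical where you pass to ``the chart $w=\mathrm{e}^{z}-1$''. That substitution is a biholomorphism only near the origin; it is not a global change of variable on $\mathbb{C}$, and the order of growth of an entire function is not invariant under it. The transform as defined in the paper is $(S_{\pi_{\lambda,\beta}}\Phi)(z)=\sum_{n}\Phi_{n}(\mathrm{e}^{z}-1)^{n}$, and this function of $z$ is in general not of finite order: take $\Phi_{n}=1/n!$, which lies in $(N)_{\pi_{\lambda,\beta}}^{-\kappa}$ for every $\kappa$ since $\sum_{n}2^{-nq}(n!)^{-1-\kappa}<\infty$; then $(S_{\pi_{\lambda,\beta}}\Phi)(z)=\exp(\mathrm{e}^{z}-1)$, which has infinite order and so belongs to no $\mathcal{E}_{\max}^{k'}(\mathbb{C})$ with $k'<\infty$. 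Equivalently, its Taylor coefficients are $B_{m}/m!$ with $B_{m}$ the Bell numbers, and $\sum_{m}(m!)^{1-\kappa}2^{-mq}|B_{m}/m!|^{2}$ diverges for every $q$ when $\kappa<1$, so the Stirling re-expansion route also breaks down here. What your argument actually characterizes is the map $\Phi\mapsto\langle\!\langle\mathrm{e}_{\pi_{\lambda,\beta}}(f(z);\cdot),\Phi\rangle\!\rangle=\sum_{n}\Phi_{n}z^{n}$, not $S_{\pi_{\lambda,\beta}}$ as defined. This is arguably a defect of the theorem's statement rather than of your proof --- for $\kappa=1$ the issue disappears because membership in ${\rm Hol}_{0}(\mathbb{C})$ is a purely local condition and the chart is legitimate there --- but you cannot dismiss it as a harmless coordinate change: you must either prove the theorem for the modified transform and say explicitly that this is the object being characterized, or explain how the global growth of $\sum_{n}\Phi_{n}(\mathrm{e}^{z}-1)^{n}$ is controlled, and it is not.
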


\begin{proof}
The proof of this theorem is analogous to that of the previous theorem,
using Proposition \ref{prop:p18} in Appendix \ref{sec:Equivalent-norms}.
\end{proof}

\subsection{Characterization of $(N)_{\pi_{\lambda,\beta}}^{-1}$}

Let ${\rm Hol}_{0}(\mathbb{C})$ be the set of holomorphic functions
in a neighborhood of the origin. Let us consider the family $(|\cdot|_{l})_{l\in\mathbb{N}_{0}}$
of norms in ${\rm Hol}_{0}(\mathbb{C})$, defined as follows. If $U\in{\rm Hol}_{0}(\mathbb{C}),$
then there is $l\in\mathbb{N}_{0},$ such that 
\[
|U|_{l,\infty}:=\sup_{|z|\leq2^{-l}}|U(z)|<\infty.
\]
Under these conditions, $U$ admits a Taylor series representation
in $D=\{z\in\mathbb{C}~|~|z|\leq2^{-l}\},$ $U(z)=\sum_{n=0}^{\infty}U_{n}z^{n},$
$\forall z\in D,$ there exist $q\in\mathbb{N}_{0},$ such that 
\[
\vertiii{U}_{-q,-1}^{2}=\sum_{n=0}^{\infty}2^{-nq}|U_{n}|^{2}<\infty.
\]
Thus, we have introduce in ${\rm Hol}_{0}(\mathbb{C})$ another family
of norms, $(\vertiii{\cdot}_{-q,-1}^{2})_{q\in\mathbb{N}_{0}},$ equivalent
to the first ($|\cdot|_{l})_{l\in\mathbb{N}_{0}}$), see Proposition
\ref{prop:p19} in Appendix \ref{sec:Equivalent-norms}.
\begin{thm}
\label{thm:t13} The $S_{\pi_{\lambda,\beta}}$-transform is a homeomorphism
between the generalized function space $(N)_{\pi_{\lambda,\beta}}^{-1}$
and the ${\rm Hol}_{0}(\mathbb{C})$ space.
\end{thm}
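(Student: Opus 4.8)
The plan is to imitate the proofs of Theorems~\ref{thm:Convolution-N1-E1min} and~\ref{thm:SqEmax}, the simplification here being that the weights defining $(H)_{-q,\pi_{\lambda,\beta}}^{-1}$ and $\vertiii{\cdot}_{-q,-1}$ carry no factorials (both are $\sum_{n}2^{-nq}|\cdot_{n}|^{2}$). First I would make the transform explicit on the dual system. Replacing $z$ by $g(z)=\mathrm{e}^{z}-1$ in \eqref{eq:Wick-expo} and using $f(g(z))=\log(\mathrm{e}^{z})=z$ gives $\mathrm{e}_{\pi_{\lambda,\beta}}(z;x)=\sum_{m=0}^{\infty}\frac{(\mathrm{e}^{z}-1)^{m}}{m!}C_{m}^{\lambda,\beta}(x)$, valid near $z=0$. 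Pairing with $\Phi=\sum_{m}\Phi_{m}Q_{m}^{\pi_{\lambda,\beta}}\in(N)_{\pi_{\lambda,\beta}}^{-1}$ and invoking the biorthogonality of Theorem~\ref{thm:biorthogonal-property} yields
\[
(S_{\pi_{\lambda,\beta}}\Phi)(z)=\sum_{m=0}^{\infty}\Phi_{m}(\mathrm{e}^{z}-1)^{m}.
\]
If $\Phi\in(H)_{-q,\pi_{\lambda,\beta}}^{-1}$ then $|\Phi_{m}|\le 2^{mq/2}\|\Phi\|_{-q,-1,\pi_{\lambda,\beta}}$, so $\sum_{m}\Phi_{m}w^{m}$ has radius of convergence at least $2^{-q/2}$; composing with the entire map $z\mapsto\mathrm{e}^{z}-1$, which vanishes at $0$, shows $S_{\pi_{\lambda,\beta}}\Phi\in{\rm Hol}_{0}(\mathbb{C})$.

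For the quantitative bound I would expand $(\mathrm{e}^{z}-1)^{m}=m!\sum_{n\ge m}S(n,m)\frac{z^{n}}{n!}$, so that the Taylor coefficients of $U:=S_{\pi_{\lambda,\beta}}\Phi$ are $U_{n}=\frac{1}{n!}\sum_{m=0}^{n}m!\,S(n,m)\,\Phi_{m}$. With $G_{\varepsilon}:=\sup_{|z|=\varepsilon}|\mathrm{e}^{z}-1|$, the Cauchy-inequality estimate $|S(n,m)|\le\frac{n!}{m!}G_{\varepsilon}^{m}2^{n}\varepsilon^{-n}$ from the proof of Theorem~\ref{thm:Convolution-N1-E1min}, followed by Cauchy--Schwarz in $m$, gives $|U_{n}|\le 2^{n}\varepsilon^{-n}\|\Phi\|_{-q,-1,\pi_{\lambda,\beta}}(1-2^{q}G_{\varepsilon}^{2})^{-1/2}$ once $\varepsilon$ is small enough that $2^{q}G_{\varepsilon}^{2}<1$. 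Summing the resulting geometric series then yields $\vertiii{U}_{-q',-1}\le C\|\Phi\|_{-q,-1,\pi_{\lambda,\beta}}$ for every $q'$ with $2^{2-q'}\varepsilon^{-2}<1$; in the inductive-limit topologies (and using Proposition~\ref{prop:p19} to pass between $\vertiii{\cdot}_{-q,-1}$ and the sup-norms $|\cdot|_{l,\infty}$) this is the continuity of $S_{\pi_{\lambda,\beta}}\colon(N)_{\pi_{\lambda,\beta}}^{-1}\to{\rm Hol}_{0}(\mathbb{C})$.

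For surjectivity and the inverse, given $U\in{\rm Hol}_{0}(\mathbb{C})$ I would set $\tilde{U}(w):=U(\log(1+w))$, holomorphic near $w=0$ since $f(0)=0$; writing $\tilde{U}(w)=\sum_{m}\Phi_{m}w^{m}$ and $\Phi:=\sum_{m}\Phi_{m}Q_{m}^{\pi_{\lambda,\beta}}$, the explicit formula above gives $(S_{\pi_{\lambda,\beta}}\Phi)(z)=\tilde{U}(\mathrm{e}^{z}-1)=U(z)$. The estimates run symmetrically: expanding $f(w)^{n}=n!\sum_{m\ge n}s(m,n)\frac{w^{m}}{m!}$ gives $\Phi_{m}=\frac{1}{m!}\sum_{n=0}^{m}n!\,s(m,n)\,U_{n}$, and with $F_{\varepsilon}:=\sup_{|z|=\varepsilon}|\log(1+z)|$ the bound $|s(m,n)|\le\frac{m!}{n!}F_{\varepsilon}^{n}2^{m}\varepsilon^{-m}$ (again from the proof of Theorem~\ref{thm:Convolution-N1-E1min}) and Cauchy--Schwarz in $n$ yield $\|\Phi\|_{-q',-1,\pi_{\lambda,\beta}}\le C\vertiii{U}_{-q,-1}$ for suitable $q'$, so $\Phi\in(N)_{\pi_{\lambda,\beta}}^{-1}$ and $S_{\pi_{\lambda,\beta}}^{-1}$ is continuous. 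Injectivity is immediate: $S_{\pi_{\lambda,\beta}}\Phi=0$ forces all $\Phi_{m}=0$ by uniqueness of the Taylor coefficients of $w\mapsto\sum_{m}\Phi_{m}w^{m}$, hence $\Phi=0$.

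I expect the main obstacle to be bookkeeping rather than anything conceptual. One must remember that for $\kappa=1$ the $S_{\pi_{\lambda,\beta}}$-transform of a generalized function is defined only on a neighbourhood of the origin (as already noted after its definition), so the correct target is the space of germs ${\rm Hol}_{0}(\mathbb{C})$ rather than a space of entire functions, and one has to track which step $q'$ of one inductive limit receives the image of step $q$ of the other so that both maps are continuous between the correct steps. The remaining routine points are justifying the interchange of the sums over $m$ and $n$ (absolute convergence on a small polydisc) and appealing to Proposition~\ref{prop:p19} for the equivalence of the two families of norms on ${\rm Hol}_{0}(\mathbb{C})$.
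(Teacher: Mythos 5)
Your proposal is correct and follows exactly the route the paper intends: the paper's own proof is only the one-line remark that the argument is analogous to those of Theorems~\ref{thm:Convolution-N1-E1min} and~\ref{thm:SqEmax} together with Proposition~\ref{prop:p19}, and your write-up supplies precisely those details (the formula $(S_{\pi_{\lambda,\beta}}\Phi)(z)=\sum_{m}\Phi_{m}(\mathrm{e}^{z}-1)^{m}$ via biorthogonality, the Stirling-number expansions with the Cauchy-inequality bounds on $S(n,m)$ and $s(m,n)$, and the step-by-step continuity between the inductive limits). The computations check out, including the correct observation that the target must be the space of germs ${\rm Hol}_{0}(\mathbb{C})$ rather than entire functions.
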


\begin{proof}
The proof of this theorem is similar to that of the two previous theorems,
using Proposition \ref{prop:p19} in Appendix \ref{sec:Equivalent-norms}.
\end{proof}

\section{Wick Calculus}

\label{sec:Wick-Calculus}It is easy to see that the set ${\rm Hol}_{0}(\mathbb{C})$
forms an algebra of functions, for the usual operations of addition
and multiplication by a scalar. So, if $\Phi,\Psi\in(N)_{\pi_{\lambda,\beta}}^{-1},$
then $S_{\pi_{\lambda,\beta}}\Phi,S_{\pi_{\lambda,\beta}}\Psi\in{\rm Hol}_{0}(\mathbb{C})$
and also $S_{\pi_{\lambda,\beta}}\Phi\cdot S_{\pi_{\lambda,\beta}}\Psi\in{\rm Hol}_{0}(\mathbb{C})$.
By Theorem \ref{thm:t13}, there exists $\Theta\in(N)_{\pi_{\lambda,\beta}}^{-1}$
such that $S\Theta=S_{\pi_{\lambda,\beta}}\Phi\cdot S_{\pi_{\lambda,\beta}}\Psi.$
We denote the generalized function $\Theta$ by $\Phi\Diamond\Psi$
which we call \emph{Wick product} of $\Phi$ and $\Psi$. So if 
\[
\Phi=\sum_{n=0}^{\infty}\Phi_{n}Q_{n}^{\pi_{\lambda,\beta}},~~~~\Psi=\sum_{n=0}^{\infty}\Psi_{n}Q_{n}^{\pi_{\lambda,\beta}},
\]
then $\Phi\Diamond\Psi$ is represented by 
\[
\Phi\Diamond\Psi=\sum_{n=0}^{\infty}\Theta_{n}Q_{n}^{\pi_{\lambda,\beta}},~~~~\Theta_{n}=\sum_{k=0}^{n}\Phi_{k}\Psi_{n-k}.
\]
The following proposition tells us that the Wick product is a continuous
map.
\begin{prop}
\label{prop:wph-qh-p}The Wick product is a continuous mapping in
$(N)_{\pi_{\lambda,\beta}}^{-1}$, and given $\Phi\in(H)_{-q,\pi_{\lambda,\beta}}^{-1}$,
$\Psi\in(H)_{-p,\pi_{\lambda,\beta}}^{-1}$ we have the following:
\[
\|\Phi\Diamond\Psi\|_{-r,-1,\pi_{\lambda,\beta}}\leq\|\Phi\|_{-q,-1,\pi_{\lambda,\beta}}\|\Psi\|_{-p,-1,\pi_{\lambda,\beta}},\qquad r=p+q+1.
\]
\end{prop}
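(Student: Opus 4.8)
The plan is to turn the statement into an elementary weighted $\ell^{2}$ estimate on the coefficient sequences and then read off continuity from the inductive limit structure. Write $\Phi=\sum_{n=0}^{\infty}\Phi_{n}Q_{n}^{\pi_{\lambda,\beta}}$ and $\Psi=\sum_{n=0}^{\infty}\Psi_{n}Q_{n}^{\pi_{\lambda,\beta}}$, so that by the representation recorded just before the statement $\Phi\Diamond\Psi=\sum_{n=0}^{\infty}\Theta_{n}Q_{n}^{\pi_{\lambda,\beta}}$ with $\Theta_{n}=\sum_{k=0}^{n}\Phi_{k}\Psi_{n-k}$. Since $\kappa=1$ gives $(n!)^{1-\kappa}=1$, the norms in play are simply $\|\Phi\|_{-q,-1,\pi_{\lambda,\beta}}^{2}=\sum_{n=0}^{\infty}2^{-nq}|\Phi_{n}|^{2}$ (and analogously for $\Psi$ with $p$, and for $\Phi\Diamond\Psi$ with $r$). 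Hence the proposition reduces to the numerical inequality
\[
\sum_{n=0}^{\infty}2^{-nr}|\Theta_{n}|^{2}\le\Big(\sum_{k=0}^{\infty}2^{-kq}|\Phi_{k}|^{2}\Big)\Big(\sum_{m=0}^{\infty}2^{-mp}|\Psi_{m}|^{2}\Big),\qquad r=p+q+1,
\]
the passage from this to continuity being soft.

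To prove the displayed inequality I would apply the Cauchy--Schwarz inequality to the finite sum $\Theta_{n}=\sum_{k=0}^{n}\Phi_{k}\Psi_{n-k}$ with the weights $2^{kq+(n-k)p}$, obtaining
\[
|\Theta_{n}|^{2}\le\Big(\sum_{k=0}^{n}2^{-kq-(n-k)p}|\Phi_{k}|^{2}|\Psi_{n-k}|^{2}\Big)\Big(\sum_{k=0}^{n}2^{kq+(n-k)p}\Big).
\]
The role of the extra $+1$ in $r=p+q+1$ is precisely to absorb the second factor: since $kq+(n-k)p\le n\max(p,q)$ and $n+1\le 2^{n}$ for every $n\in\mathbb{N}_{0}$, one has
\[
\sum_{k=0}^{n}2^{kq+(n-k)p}\le(n+1)\,2^{n\max(p,q)}\le 2^{n}\,2^{n\max(p,q)}\le 2^{n(1+\max(p,q)+\min(p,q))}=2^{nr}.
\]
Therefore $2^{-nr}|\Theta_{n}|^{2}\le\sum_{k=0}^{n}2^{-kq-(n-k)p}|\Phi_{k}|^{2}|\Psi_{n-k}|^{2}$.

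Summing over $n$ and interchanging the order of summation (legitimate since all summands are nonnegative), and setting $m:=n-k$, gives
\[
\sum_{n=0}^{\infty}2^{-nr}|\Theta_{n}|^{2}\le\sum_{k=0}^{\infty}\sum_{m=0}^{\infty}2^{-kq-mp}|\Phi_{k}|^{2}|\Psi_{m}|^{2}=\|\Phi\|_{-q,-1,\pi_{\lambda,\beta}}^{2}\,\|\Psi\|_{-p,-1,\pi_{\lambda,\beta}}^{2},
\]
which is exactly the asserted bound. This shows at once that $\Phi\Diamond\Psi\in(H)_{-r,\pi_{\lambda,\beta}}^{-1}$ whenever $\Phi\in(H)_{-q,\pi_{\lambda,\beta}}^{-1}$ and $\Psi\in(H)_{-p,\pi_{\lambda,\beta}}^{-1}$, and that $\Diamond$ is a bounded, hence jointly continuous, bilinear map between these Hilbert spaces. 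Since $(N)_{\pi_{\lambda,\beta}}^{-1}=\bigcup_{q}(H)_{-q,\pi_{\lambda,\beta}}^{-1}$ carries the inductive limit topology, continuity of the Wick product on $(N)_{\pi_{\lambda,\beta}}^{-1}$ follows. I expect the only step requiring genuine (if light) care to be the geometric-sum estimate above, which is where the specific value $r=p+q+1$ is forced; one should also remark, invoking Theorem~\ref{thm:t13}, that $\Theta$ is well defined independently of the chosen representations of $\Phi$ and $\Psi$.
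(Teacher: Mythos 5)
Your proof is correct and follows essentially the same route as the paper: both reduce the claim to a weighted $\ell^{2}$ bound on the Cauchy product $\Theta_{n}=\sum_{k=0}^{n}\Phi_{k}\Psi_{n-k}$ via Cauchy--Schwarz and absorb the extra factor using $n+1\leq 2^{n}$, which is exactly where $r=p+q+1$ enters. The only (cosmetic) difference is that you place the weights $2^{-kq-(n-k)p}$ inside the Cauchy--Schwarz step, whereas the paper applies it unweighted and then uses the monotonicity $2^{-nq}\leq 2^{-kq}$, $2^{-np}\leq 2^{-(n-k)p}$ afterwards.
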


\begin{proof}
We can estimate the norm as follows: 
\[
\begin{array}{rcl}
\|\Phi\Diamond\Psi\|_{-r,-1,\pi_{\lambda,\beta}}^{2} & = & {\displaystyle \sum_{n=0}^{\infty}2^{-nr}|\Theta_{n}|^{2}}\\
 & \leq & {\displaystyle \sum_{n=0}^{\infty}2^{-nr}(n+1)\sum_{k=0}^{n}|\Phi_{k}|^{2}|\Psi_{n-k}|^{2}}\\
 & \leq & {\displaystyle \sum_{n=0}^{\infty}\sum_{k=0}^{n}2^{-nq}2^{-np}|\Phi_{k}|^{2}|\Psi_{n-k}|^{2}}\\
 & \leq & \left({\displaystyle \sum_{n=0}^{\infty}}2^{-nq}|\Phi_{n}|^{2}\right)\left({\displaystyle \sum_{n=0}^{\infty}}2^{-np}|\Psi_{n}|^{2}\right)\\
 & = & \|\Phi\|_{-q,-1,\pi_{\lambda,\beta}}^{2}\|\Psi\|_{-p,-1,\pi_{\lambda,\beta}}^{2},
\end{array}
\]
where we use the fact $\frac{n+1}{2^{n}}\leq1.$
\end{proof}
The Wick powers $\Phi^{\Diamond n}:=\Phi\Diamond\Phi\Diamond\dots\Diamond\Phi,$
$n\in\mathbb{N}_{0}$ of $\Phi\in(N)_{\pi_{\lambda,\beta}}^{-1}$
are defined as 
\[
\Phi^{\Diamond n}=S_{\pi_{\lambda,\beta}}^{-1}((S_{\pi_{\lambda,\beta}}\Phi)^{n}).
\]
In general, if $L(z)=\sum_{n=0}^{\infty}L_{n}z^{n}$ is an analytic
function, so we can study the Wick version of this function, i.e.,
$L^{\Diamond}(\Phi)$, $\Phi\in(N)_{\pi_{\lambda,\beta}}^{-1}.$ We
have the following theorem.
\begin{thm}
Let $L$ be an analytic function in a neighborhood of $z_{0}=\mathbb{E}_{\pi_{\lambda,\beta}}(\Phi),$
$\Phi\in(N)_{\pi_{\lambda,\beta}}^{-1}$. Then $L^{\Diamond}(\Phi)$
defined by $L^{\Diamond}(\Phi):=S_{\pi_{\lambda,\beta}}^{-1}(L(S_{\pi_{\lambda,\beta}}\Phi))$
is an element in $(N)_{\pi_{\lambda,\beta}}^{-1}$.
\end{thm}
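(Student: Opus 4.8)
The plan is to reduce the statement to the characterization result in Theorem~\ref{thm:t13}, which says that $S_{\pi_{\lambda,\beta}}$ is a homeomorphism between $(N)_{\pi_{\lambda,\beta}}^{-1}$ and ${\rm Hol}_0(\mathbb{C})$, the holomorphic functions on some neighborhood of the origin. Write $F:=S_{\pi_{\lambda,\beta}}\Phi$. By Theorem~\ref{thm:t13}, $F\in{\rm Hol}_0(\mathbb{C})$, so $F$ is holomorphic on some disc $D_{\rho}=\{z\in\mathbb{C}\mid|z|<\rho\}$ with $\rho>0$.

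The first step is to identify the value $F(0)$. Since $\mathrm{e}_{\pi_{\lambda,\beta}}(0;\cdot)=\mathrm{e}^{0\cdot\,}/l_{\pi_{\lambda,\beta}}(0)=\boldsymbol{1}$ (because $l_{\pi_{\lambda,\beta}}(0)=1$), the extended definition $(S_{\pi_{\lambda,\beta}}\Phi)(z)=\langle\!\langle\mathrm{e}_{\pi_{\lambda,\beta}}(z;\cdot),\Phi\rangle\!\rangle_{\pi_{\lambda,\beta}}$ evaluated at $z=0$ gives $F(0)=\langle\!\langle\boldsymbol{1},\Phi\rangle\!\rangle_{\pi_{\lambda,\beta}}=\mathbb{E}_{\pi_{\lambda,\beta}}(\Phi)=z_{0}$. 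Thus $F$ sends a neighborhood of $0$ into a neighborhood of $z_{0}$, which is precisely where $L$ is assumed analytic.

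Next, since $L$ is analytic on some disc $B(z_{0},r)$, $r>0$, and $F$ is continuous at $0$ with $F(0)=z_{0}$, there is $0<\rho'\le\rho$ with $F(D_{\rho'})\subset B(z_{0},r)$. The composition $L\circ F$ is then holomorphic on $D_{\rho'}$, so $L(S_{\pi_{\lambda,\beta}}\Phi)\in{\rm Hol}_0(\mathbb{C})$. Applying $S_{\pi_{\lambda,\beta}}^{-1}$, which by Theorem~\ref{thm:t13} is a homeomorphism from ${\rm Hol}_0(\mathbb{C})$ onto $(N)_{\pi_{\lambda,\beta}}^{-1}$, we conclude that $L^{\Diamond}(\Phi)=S_{\pi_{\lambda,\beta}}^{-1}\bigl(L(S_{\pi_{\lambda,\beta}}\Phi)\bigr)$ is a well-defined element of $(N)_{\pi_{\lambda,\beta}}^{-1}$.

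The argument is essentially bookkeeping once Theorem~\ref{thm:t13} is in hand; the one point requiring care is the identification $F(0)=\mathbb{E}_{\pi_{\lambda,\beta}}(\Phi)=z_{0}$, which is exactly what makes the hypothesis on $L$ the right one for $L\circ F$ to be holomorphic near the origin. No quantitative norm estimate is needed, because membership in ${\rm Hol}_0(\mathbb{C})$ only requires holomorphy on some (arbitrarily small) neighborhood of $0$; this is in contrast with the Wick-product estimate of Proposition~\ref{prop:wph-qh-p}, where one tracks the Hilbert-space scales explicitly.
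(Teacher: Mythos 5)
Your proof is correct and follows the same route as the paper: reduce to Theorem~\ref{thm:t13} by checking that $L\circ S_{\pi_{\lambda,\beta}}\Phi$ is holomorphic in a sufficiently small neighborhood of the origin. You additionally make explicit the key identification $(S_{\pi_{\lambda,\beta}}\Phi)(0)=\langle\!\langle\boldsymbol{1},\Phi\rangle\!\rangle_{\pi_{\lambda,\beta}}=\mathbb{E}_{\pi_{\lambda,\beta}}(\Phi)=z_{0}$, which the paper leaves implicit but which is exactly what makes the hypothesis on $L$ the right one.
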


\begin{proof}
In order to apply the characterization theorem (cf. Theorem \ref{thm:t13})
it is enough to check that $L(S_{\pi_{\lambda,\beta}}\Phi)$ is holomorphic
around the origin. But this follows easily by choosing a sufficiently
small neighborhood around the origin so that the composition $L\circ S_{\pi_{\lambda,\beta}}\Phi$
is holomorphic.
\end{proof}
\begin{example}
Let $\Phi,\Psi\in(N)_{\pi_{\lambda,\beta}}^{-1}$ be two generalized
functions.
\end{example}

\begin{enumerate}
\item Then $\exp^{\Diamond}(\Phi)$ can be written as 
\[
\exp^{\Diamond}(\Phi)=\sum_{n=0}^{\infty}\frac{1}{n!}\Phi^{\Diamond n}.
\]
\item It is easy to check the following property: 
\[
\exp^{\Diamond}(\Phi)\Diamond\exp^{\Diamond}(\Psi)=\exp^{\Diamond}(\Phi+\Psi).
\]
\item If $\Phi$ is such that $\mathbb{E}_{\pi_{\lambda,\beta}}(\Phi)>0$,
then $\log^{\Diamond}(\Phi)\in(N)_{\pi_{\lambda,\beta}}^{-1}$ which
is the solution of the equation 
\[
\exp^{\Diamond}(X)=\Phi.
\]
\item If $\Phi$ is such that $\mathbb{E}_{\pi_{\lambda,\beta}}(\Phi)\neq0$,
then $\Phi^{\Diamond-1}:=S_{\pi_{\lambda,\beta}}^{-1}((S_{\pi_{\lambda,\beta}}\Phi)^{-1})\in(N)_{\pi_{\lambda,\beta}}^{-1}$
and the solution of the equation 
\[
X\Diamond\Phi=\Psi
\]
is $X=\Phi^{\Diamond-1}\Diamond\Psi.$
\end{enumerate}
The Wick product easily generalizes to generalized function spaces
$(N)_{\pi_{\lambda,\beta}}^{-\kappa},$ $\kappa\in[0,1),$ as well
as test function spaces $(N)_{\pi_{\lambda,\beta}}^{\kappa}$, $\kappa\in[0,1]$.
The concept of Wick product is often used in models of stochastic
differential equations, the solutions being obtained through the $S_{\pi_{\lambda,\beta}}$--transform,
see for example \cite{HOUZ09}.

\section{Conclusion and Outlook}

In this paper, we have developed the biorthogonal system to investigate
the spaces of test and generalized functions associated to the fractional
Poisson measure $\pi_{\lambda,\beta}$ on $\mathbb{N}_{0}$. The system
of generalized Appell polynomials $\mathbb{P}^{\pi_{\lambda,\beta}}$
describes the spaces of test functions while the generalized dual
Appell system $\mathbb{Q}^{\pi_{\lambda,\beta}}$ is suited to description
of generalized functions rising from $\pi_{\lambda,\beta}$. In addition,
we have characterized both test and generalized function spaces through
two suitable integral transforms. The Wick calculus extends in a straightforward
manner from Gaussian analysis to the present fractional Poisson analysis.
In a future work, we plan to extend these results to an infinite dimensional
framework. More precisely, the fractional Poisson measure on the linear
space $\mathcal{D}'(\mathbb{R}^{d})=C_{0}^{\infty}(\mathbb{R}^{d})$,
the dual of the nuclear space of Schwartz test functions, or on the
configuration space $\Gamma_{\mathbb{R}^{d}}\subset\mathcal{D}'(\mathbb{R}^{d})$
over $\mathbb{R}^{d}$.

\appendix

\section{Proofs. Equivalence of Norms}

\label{sec:Equivalent-norms}For the completeness of this work we
provide the proofs of the following propositions.
\begin{prop}
\label{prop:equivalence-norms} The two system of norms $(|\cdot|_{l,k})_{l\in\mathbb{N}_{0}}$,
$k=\frac{2}{1+\kappa}$ and $(\vertiii{\cdot}_{q,\kappa})_{q\in\mathbb{N}_{0}},$
$\kappa\in[0,1]$ defined in the space $\mathcal{E}_{\min}^{k}(\mathbb{C})$
are equivalent.
\end{prop}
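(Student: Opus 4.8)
The plan is to show that the two families mutually dominate each other: for every $l\in\mathbb{N}_0$ there are $q\in\mathbb{N}_0$ and $C>0$ with $|u|_{l,k}\le C\vertiii{u}_{q,\kappa}$ for all $u$, and conversely for every $q$ there are $l$ and $C'>0$ with $\vertiii{u}_{q,\kappa}\le C'|u|_{l,k}$; since each norm of one family is then bounded by a norm of the other, the two families are equivalent. Writing Taylor series $u(z)=\sum_{n\ge0}u_nz^n$ and using $1+\kappa=2/k$, so that $\vertiii{u}_{q,\kappa}^2=\sum_{n\ge0}(n!)^{2/k}2^{nq}|u_n|^2$, the whole argument will rest on the elementary two-sided comparison, valid for all $t\ge0$,
\[
\sup_{n\ge0}\frac{t^n}{(n!)^{1/k}}\;\le\;\sum_{n\ge0}\frac{t^n}{(n!)^{1/k}}\;\le\;2\exp\!\Big(\tfrac{2^k}{k}\,t^k\Big),
\]
together with Stirling's formula. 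I would prove the right-hand inequality by writing $\frac{t^n}{(n!)^{1/k}}=2^{-n}\frac{(2t)^n}{(n!)^{1/k}}\le 2^{-n}\sup_{m}\frac{(2t)^m}{(m!)^{1/k}}$, summing the geometric factor, and bounding the remaining supremum: taking logarithms and using $\log m!\ge m\log m-m$ reduces it to maximizing $m\log s-\tfrac1k(m\log m-m)$ over $m>0$, whose maximum is $s^k/k$, attained at $m=s^k$.

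For the first direction (coefficients control the sup-norm) I would start from $|u_n|\le\vertiii{u}_{q,\kappa}(n!)^{-1/k}2^{-nq/2}$, which gives $|u(z)|\le\vertiii{u}_{q,\kappa}\sum_{n}(n!)^{-1/k}(2^{-q/2}|z|)^n$, and then apply the displayed bound with $t=2^{-q/2}|z|$ to obtain $|u(z)|\le 2\vertiii{u}_{q,\kappa}\exp\!\big(\tfrac{2^k}{k}2^{-qk/2}|z|^k\big)$. In particular $\limsup_n|u_n|^{1/n}=0$, so $u$ is entire. Choosing $q$ large enough relative to $l$ and $k\in[1,2]$ so that $\tfrac{2^k}{k}2^{-qk/2}\le 2^{-l}$ then yields $u\in\mathcal{E}^k_{2^{-l}}(\mathbb{C})$ with $|u|_{l,k}\le 2\vertiii{u}_{q,\kappa}$.

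For the converse (sup-norm controls coefficients) I would use $|u(z)|\le|u|_{l,k}\exp(2^{-l}|z|^k)$ and Cauchy's inequality on $|z|=r$ to get $|u_n|\le|u|_{l,k}\inf_{r>0}r^{-n}\exp(2^{-l}r^k)$; minimizing the exponent $2^{-l}r^k-n\log r$ at $r=(n2^l/k)^{1/k}$ gives $|u_n|\le|u|_{l,k}\big(ek\,2^{-l}/n\big)^{n/k}$. Stirling's formula $(e/n)^n\le e^{1/12}\sqrt{2\pi n}\,(n!)^{-1}$ converts this into $|u_n|\le C\,(n!)^{-1/k}(k2^{-l})^{n/k}(2\pi n)^{1/(2k)}|u|_{l,k}$ with $C$ independent of $n$, whence $(n!)^{2/k}2^{nq}|u_n|^2\le C^2(2\pi n)^{1/k}\big(2^qk^{2/k}2^{-2l/k}\big)^n|u|_{l,k}^2$. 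The series over $n$ converges once $2^qk^{2/k}2^{-2l/k}<1$, i.e.\ for $l$ large enough relative to $q$ (using $0\le\log_2 k\le1$), giving $\vertiii{u}_{q,\kappa}\le C'|u|_{l,k}$; in particular every $u\in\mathcal{E}^k_{\min}(\mathbb{C})$ has all the Hilbert norms finite, so the two norm systems live on the same space.

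Combining the two directions shows each family of norms is dominated by the other, hence they are equivalent and define the same topology on $\mathcal{E}^k_{\min}(\mathbb{C})$, and the boundary cases $\kappa=0$ ($k=2$) and $\kappa=1$ ($k=1$) are covered by the same estimates. The main obstacle — really the only substantive point — is establishing the elementary comparison lemma above and then carrying out the (routine but fiddly) bookkeeping of constants, so that the selected indices $q$ and $l$ come out as genuine nonnegative integers and the resulting exponential types land in the prescribed dyadic scale $2^{-l}$; everything else is the two standard estimates (Cauchy's inequality one way, the power-series bound the other) plus Stirling's formula.
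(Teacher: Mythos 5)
Your proposal is correct and follows essentially the same route as the paper: mutual domination of the two norm families via Taylor coefficients, with Cauchy's inequality, the optimal radius $\rho=(n2^l/k)^{1/k}$ and Stirling's formula in one direction, and a bound $|u(z)|\le C\vertiii{u}_{q,\kappa}\exp(c\,|z|^{k})$ followed by a suitable choice of index in the other. The only real difference is cosmetic: where you control $\sum_n t^n/(n!)^{1/k}$ by $2\exp(2^k t^k/k)$ via a $2^{-n}$-geometric trick and a calculus supremum, the paper applies Cauchy--Schwarz to the coefficient series and then H\"older's inequality to get $\sum_n (x^n/n!)^{1+\kappa}\le e^{(1+\kappa)x}$; both yield the same exponential-type control.
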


\begin{proof}
Let $l\in\mathbb{N}_{0}$ and $u\in\mathcal{E}_{\min}^{k}(\mathbb{C})$
be arbitrary. We are going to show that there exist $C>0$ and $q\in N_{0},$
such that $|u|_{l,k}\leq C\vertiii{u}_{q,\kappa}.$ As $u\in\mathcal{E}_{\min}^{k}(\mathbb{C}),$
we have that $\vertiii{u}_{q,\kappa}^{2}<\infty,$ $\forall q\in\mathbb{N}_{0}$.
In addition, it turns out also that $|u|_{l,k}<\infty,$ $\forall l\in\mathbb{N}_{0}$
because $\mathcal{E}_{2^{-l}}^{k}(\mathbb{C})\subset\mathcal{E}_{\min}^{k}(\mathbb{C})$,
$\forall l\in\mathbb{N}_{0}.$ Since $u$ is entire, it admits the
Taylor series representation around the origin, $u(z)=\sum_{n=0}^{\infty}u_{n}z^{n}.$
Thus, $\forall q\in\mathbb{N}_{0},$ $\forall z\in\mathbb{C},$ by
the Cauchy-Schwarz inequality we obtain 
\[
\begin{array}{ccl}
|u(z)| & \leq & {\displaystyle \sum_{n=0}^{\infty}|u_{n}||z|^{n}}\\
 & \leq & {\displaystyle \sum_{n=0}^{\infty}\left((n!)^{1+\kappa}2^{nq}\right)^{1/2}|u_{n}|\left((n!)^{1+\kappa}2^{nq}\right)^{-1/2}|z|^{n}}\\
 & \leq & \vertiii{u}_{q,\kappa}{\displaystyle \sqrt{{\displaystyle \sum_{n=0}^{\infty}\frac{(2^{-q}|z|^{2})^{n}}{(n!)^{1+\kappa}}}}}\\
 & = & \vertiii{u}_{q,\kappa}{\displaystyle \sqrt{{\displaystyle \sum_{n=0}^{\infty}\left(\frac{(2^{-kq/2}|z|^{k})^{n}}{n!}\right)^{1+\kappa}}}}.
\end{array}
\]
Note that by the H{\"o}lder's inequality, we have
\begin{align*}
{\displaystyle \sum_{n=0}^{\infty}\left(\frac{x^{n}}{n!}\right)^{\gamma}} & \leq\left(\sum_{n=0}^{\infty}\frac{x^{n}}{n!}\right)^{2-\gamma}\left(\sum_{n=0}^{\infty}\left(\frac{x^{n}}{n!}\right)^{2}\right)^{\gamma-1}\\
 & \leq(\mathrm{e}^{x})^{2-\gamma}(\mathrm{e}^{2x})^{\gamma-1}=\mathrm{e}^{\gamma x}
\end{align*}
for $x\geq0$ and $1<\gamma<2$. This yields
\[
\sqrt{{\displaystyle \sum_{n=0}^{\infty}\left(\frac{(2^{-kq/2}|z|^{k})^{n}}{n!}\right)^{1+\kappa}}}\leq\exp(k^{-1}2^{-kq/2}|z|^{k}).
\]
Thus, we have

\[
|u(z)|\leq\vertiii{u}_{q,\kappa}\exp(2^{-kq/2}|z|^{k}).
\]
So, $\forall q\in\mathbb{N}_{0,}$ 
\[
{\displaystyle \sup_{z\in\mathbb{C}}\{|u(z)|\exp(-2^{-kq/2}|z|^{k})\}\leq\|u\|_{q,\kappa.}}
\]
Hence, we choose $q$ such that $l\leq\frac{q}{1+\kappa}$ to get
$|u|_{l,k}\leq\|u\|_{q,\kappa.}$

Conversely, let $q\in\mathbb{N}_{0}$ and $u\in\mathcal{E}_{\min}^{k}(\mathbb{C})$
be arbitrary. Now we are going to show that there exist $K>0$ and
$l\in\mathbb{N}_{0}$, such that $\|u\|_{q,\kappa}\leq K|u|_{l,k}.$
Note that, $\forall l\in\mathbb{N}_{0},$ we have 
\[
|u(z)|\leq|u|_{l,k}\exp(2^{-l}|z|^{k}),\ \forall z\in\mathbb{C}.
\]
Thus, it turns out that 
\[
|u_{n}|\leq{\displaystyle \frac{1}{2\pi}\int_{|z|=\rho>0}\frac{|u(z)|}{|z|^{n+1}}|dz|}\leq|u|_{l,k}{\displaystyle \frac{\mathrm{e}^{2^{-l}\rho^{k}}}{\rho^{n}}},\ \rho>0.
\]
This bound may be optimized by taking $\rho=(n2^{l}k^{-1})^{k^{-1}}.$
Substituting, we get 
\[
|u_{n}|\leq|u|_{l,k}(\mathrm{e}^{n}n^{-n})^{k^{-1}}(k^{n}2^{-nl})^{k^{-1}}.
\]
Finally, taking into the account the Stirling formula, 
\[
\mathrm{e}^{n}n^{-n}\leq{\displaystyle \frac{1}{n!}\mathrm{e}\sqrt{2\pi}(n+1)}
\]
we obtain 
\[
|u_{n}|\leq|u|_{l,k}\left(\frac{\mathrm{e}\sqrt{2\pi}(n+1)}{n!}\right)^{k^{-1}}(k^{n}2^{-nl})^{k^{-1}}.
\]
Thus, the norm $\|u\|_{q,\kappa}^{2}$ is given by

\begin{equation}
\vertiii{u}_{q,\kappa}^{2}=\sum_{n=0}^{\infty}(n!)^{1+\kappa}2^{nq}|u_{n}|^{2}\leq|u|_{l,k}^{2}(\mathrm{e}\sqrt{2\pi})^{1+\kappa}\sum_{n=0}^{\infty}(n+1)^{2}(2^{q}(k2^{-l})^{1+\kappa})^{n}.\label{eq:eq9}
\end{equation}
Given $q\in\mathbb{N}_{0},$ we take $l\in\mathbb{N}_{0}$ such that
$2^{q}(k2^{-l})^{1+\kappa}<1$ and use the equality

\[
{\displaystyle \sum_{n=0}^{\infty}(n+1)^{2}x^{n}}\leq\sum_{n=0}^{\infty}(n+1)(n+2)x^{n}=(1-x)^{-3},\ |x|<1,
\]
the series in \eqref{eq:eq9} is finite. The inequality \eqref{eq:eq9}
also proves that the norms of the family $(\vertiii{\cdot}_{q,\kappa})_{q\in\mathbb{\mathbb{N}}_{0}}$
are also well defined.
\end{proof}
\begin{prop}
\label{prop:p18}The two system of norms $(|\cdot|_{l,k'})_{l\in\mathbb{N}_{0}}$,
$k'=\frac{2}{1-\kappa}$ and $(\vertiii{\cdot}_{-q,-\kappa})_{q\in\mathbb{N}_{0}},$
$\kappa\in[0,1)$ defined in the space $\mathcal{E}_{\max}^{k'}(\mathbb{C})$
are equivalent.
\end{prop}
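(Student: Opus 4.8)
The plan is to follow, \emph{mutatis mutandis}, the proof of Proposition~\ref{prop:equivalence-norms}: establish for every $l\in\mathbb{N}_0$ the existence of $q\in\mathbb{N}_0$ and $C>0$ with $\vertiii{U}_{-q,-\kappa}\le C\,|U|_{l,k'}$, and for every $q\in\mathbb{N}_0$ the existence of $l\in\mathbb{N}_0$ and $C'>0$ with $|U|_{l,k'}\le C'\,\vertiii{U}_{-q,-\kappa}$; together these say that the two families of norms define the same topology on $\mathcal{E}_{\max}^{k'}(\mathbb{C})$. Throughout write $k'=\frac{2}{1-\kappa}$, so that $1-\kappa=\frac{2}{k'}$ and, since $\kappa\in[0,1)$, we have $k'\in[2,\infty)$ and $\gamma:=\frac{2}{k'}\in(0,1]$.

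For the first inequality, fix $l$ and $U\in\mathcal{E}_{2^l}^{k'}(\mathbb{C})$ with Taylor coefficients $U_n$. From $|U(z)|\le|U|_{l,k'}\exp(2^l|z|^{k'})$ and the Cauchy estimate $|U_n|\le\rho^{-n}\sup_{|z|=\rho}|U(z)|$, optimising over $\rho>0$ (the optimal radius being $\rho=(n2^{-l}/k')^{1/k'}$) gives $|U_n|\le|U|_{l,k'}\,(\mathrm{e}k'2^l/n)^{n/k'}$. Using $n^n\ge n!$ (equivalently, Stirling's formula) and $1-\kappa=2/k'$ we obtain $(n!)^{1-\kappa}|U_n|^2\le|U|_{l,k'}^2\,(\mathrm{e}k'2^l)^{2n/k'}$, whence $\vertiii{U}_{-q,-\kappa}^2\le|U|_{l,k'}^2\sum_{n=0}^{\infty}\bigl(2^{-q}(\mathrm{e}k'2^l)^{2/k'}\bigr)^n$, a convergent geometric series as soon as $q$ is taken large enough (depending on $l$) that $2^{-q}(\mathrm{e}k'2^l)^{2/k'}<1$. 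In particular this shows that the Hilbert norms $\vertiii{\cdot}_{-q,-\kappa}$ are finite, hence well defined, on $\mathcal{E}_{\max}^{k'}(\mathbb{C})$.

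For the converse inequality, fix $q$ and $U\in\mathcal{E}_{\max}^{k'}(\mathbb{C})$. From $|U_n|\le(n!)^{-(1-\kappa)/2}2^{nq/2}\vertiii{U}_{-q,-\kappa}$ and the Cauchy--Schwarz inequality applied to $\sum_n|U_n||z|^n$, one gets $|U(z)|\le\vertiii{U}_{-q,-\kappa}\bigl(\sum_{n=0}^{\infty}(n!)^{-(1-\kappa)}(2^q|z|^2)^n\bigr)^{1/2}$. Setting $x=2^{qk'/2}|z|^{k'}$, the inner sum equals $\sum_{n=0}^{\infty}(x^n/n!)^{\gamma}$ with $\gamma=\frac{2}{k'}\in(0,1]$. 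The crux of the proof is to bound this series by $C_\gamma(1+x)\mathrm{e}^{\gamma x}$: in contrast with the minimal-type case of Proposition~\ref{prop:equivalence-norms}, where the exponent $1+\kappa\ge1$ permitted a direct Hölder interpolation between $\ell^1$ and $\ell^2$, here $\gamma\le1$ forces a split of the sum at $n_0:=\lceil 4x\rceil$, estimating the finitely many terms with $n<n_0$ each by $\bigl(\sum_m x^m/m!\bigr)^{\gamma}=\mathrm{e}^{\gamma x}$ and the tail $n\ge n_0$ by a geometric series via $x^n/n!\le(\mathrm{e}x/n)^n\le(\mathrm{e}/4)^n$ on that range. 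Absorbing the polynomial factor through $(1+x)^{1/2}\le\mathrm{e}^{x/2}$ then yields $|U(z)|\le\sqrt{C_\gamma}\,\vertiii{U}_{-q,-\kappa}\exp\bigl((\tfrac{1}{k'}+\tfrac12)2^{qk'/2}|z|^{k'}\bigr)$, and choosing $l\in\mathbb{N}_0$ with $2^l\ge(\tfrac{1}{k'}+\tfrac12)2^{qk'/2}$ (possible since $\tfrac{1}{k'}+\tfrac12\le1$) gives $|U|_{l,k'}\le\sqrt{C_\gamma}\,\vertiii{U}_{-q,-\kappa}$.

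The only step I expect to require genuine care is the elementary estimate $\sum_{n\ge0}(x^n/n!)^{\gamma}\le C_\gamma(1+x)\mathrm{e}^{\gamma x}$ for $0<\gamma\le1$ and $x\ge0$; everything else is a transcription of the proof of Proposition~\ref{prop:equivalence-norms} with $1+\kappa$ replaced by $1-\kappa$, $2^{nq}$ by $2^{-nq}$, and $k=\frac{2}{1+\kappa}$ by $k'=\frac{2}{1-\kappa}$, together with the auxiliary observation $\gamma=2/k'\le1$, used both to make the relevant geometric series converge and to fit the resulting growth bound into the type $2^l$ for a suitable $l$.
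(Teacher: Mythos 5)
Your proposal is correct, and its skeleton coincides with the paper's (which simply declares the proof ``analogous'' to that of Proposition~\ref{prop:equivalence-norms} and records the resulting constants): Cauchy estimates on the Taylor coefficients plus an optimisation in the radius and Stirling-type bounds give $\vertiii{U}_{-q,-\kappa}\le C\,|U|_{l,k'}$, while Cauchy--Schwarz reduces the converse to controlling $\sum_{n\ge0}(x^{n}/n!)^{\gamma}$ with $\gamma=1-\kappa=2/k'\in(0,1]$. Where you genuinely depart from the paper is in this last estimate. You correctly observe that the interpolation $\sum a_{n}^{\gamma}\le(\sum a_{n})^{2-\gamma}(\sum a_{n}^{2})^{\gamma-1}$ used in Proposition~\ref{prop:equivalence-norms} is only available for $\gamma\in[1,2]$ and in fact reverses for $\gamma<1$, so the minimal-type argument cannot be copied verbatim; you repair this by splitting the series at $n_{0}=\lceil 4x\rceil$, bounding the head termwise by $\mathrm{e}^{\gamma x}$ and the tail by a geometric series via $x^{n}/n!\le(\mathrm{e}x/n)^{n}\le(\mathrm{e}/4)^{n}$, and then absorbing the polynomial prefactor into the exponential. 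The paper's stated constants ($C=2^{\kappa/2}$, and the condition $2^{l}\ge(k')^{-1}2^{(q+\kappa)/(1-\kappa)}$) indicate that the intended route is instead H\"older's inequality with the exponent pair $(1/\kappa,1/(1-\kappa))$ applied after inserting a geometric damping factor $2^{-n\kappa}$, exactly as in Example~\ref{exa:wick-exp-as-test-func}; that version is slicker and yields cleaner explicit constants, whereas yours is more elementary and self-contained (and also covers $\kappa=0$, where the H\"older pair degenerates). Both are valid, and your identification of precisely where the analogy with the minimal-type case breaks down is a genuine improvement in transparency over the paper's one-line proof.
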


\begin{proof}
Following a process analogous to the previous proof, taking $q\in\mathbb{N}_{0}$
and $U\in\mathcal{E}_{\max}^{k'}(\mathbb{C})$ as arbitrary, we show
that there is a natural number $l$, such that $2^{l}\geq(k')^{-1}2^{(q+\kappa)/(1-\kappa)}$
and a positive number $C=2^{\kappa/2}$ such that $|u|_{l,k'}\leq C\vertiii{U}_{-q,-\kappa}$.

Conversely, and also analogously to the previous proof, given $l\in\mathbb{N}_{0}$
and $U\in\mathcal{E}_{\max}^{k'}(\mathbb{C})$ such that $|U|_{l,k'}<\infty$,
we prove that there is a natural number $q$ where $2^{-q}(\frac{2^{l+1}}{1-\kappa}){}^{1-\kappa}<1$
and a positive number $K=(\mathrm{e\sqrt{2\pi}})^{(1-\kappa)/2}(1-2^{-q}(\frac{2^{l+1}}{1-\kappa}){}^{\kappa-1}),$
such that $\vertiii{U}_{-q,-\kappa}\leq K|U|_{l,k'}$.
\end{proof}
\begin{prop}
\label{prop:p19}The two system of norms $(|\cdot|_{l,\infty})_{l\in\mathbb{N}_{0}}$
and $(\vertiii{\cdot}_{-q,-1})_{q\in\mathbb{N}_{0}},$ introduced
in ${\rm Hol}_{0}(\mathbb{C})$ are equivalent.
\end{prop}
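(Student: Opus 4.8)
The plan is to follow the same two-sided scheme used in the proofs of Proposition~\ref{prop:equivalence-norms} and Proposition~\ref{prop:p18}, with the entire-function growth control there replaced by the elementary Cauchy estimates for a function holomorphic on a disc. Concretely, I will show that each norm in one family is bounded, up to a multiplicative constant, by a suitable norm in the other family, so that the two inductive systems --- and hence the inductive-limit topologies they define on ${\rm Hol}_0(\mathbb{C})$ --- coincide. Since the indices $l$ and $q$ must stay in $\mathbb{N}_0$, I will only match them \emph{cofinally} (i.e.\ up to a fixed shift), which is exactly what is needed.

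For the first direction, fix $l\in\mathbb{N}_0$ and $U\in{\rm Hol}_0(\mathbb{C})$ with $|U|_{l,\infty}<\infty$, so that $U$ is bounded on $\{|z|\le2^{-l}\}$ and has there the Taylor expansion $U(z)=\sum_{n=0}^\infty U_nz^n$. Applying the Cauchy inequality $|U_n|\le\rho^{-n}\sup_{|z|=\rho}|U(z)|$ on circles of radius $\rho<2^{-l}$ and letting $\rho\uparrow2^{-l}$ gives $|U_n|\le2^{nl}|U|_{l,\infty}$. Choosing then any $q>2l$ (for instance $q=2l+1$) yields
\[
\vertiii{U}_{-q,-1}^2=\sum_{n=0}^\infty2^{-nq}|U_n|^2\le|U|_{l,\infty}^2\sum_{n=0}^\infty2^{n(2l-q)}=\frac{|U|_{l,\infty}^2}{1-2^{2l-q}},
\]
so $\vertiii{U}_{-q,-1}\le(1-2^{2l-q})^{-1/2}|U|_{l,\infty}$; in particular this shows the norms $\vertiii{\cdot}_{-q,-1}$ are well defined on the relevant spaces.

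For the converse, fix $q\in\mathbb{N}_0$ and $U$ with $\vertiii{U}_{-q,-1}<\infty$, and estimate, for $|z|\le2^{-l}$, by the triangle inequality and the Cauchy-Schwarz inequality,
\[
|U(z)|\le\sum_{n=0}^\infty|U_n||z|^n\le\Bigl(\sum_{n=0}^\infty2^{-nq}|U_n|^2\Bigr)^{1/2}\Bigl(\sum_{n=0}^\infty2^{nq}2^{-2nl}\Bigr)^{1/2}=\vertiii{U}_{-q,-1}\Bigl(\sum_{n=0}^\infty2^{n(q-2l)}\Bigr)^{1/2}.
\]
The geometric series converges as soon as $2l>q$; taking $l=\lfloor q/2\rfloor+1$ gives $|U|_{l,\infty}\le(1-2^{q-2l})^{-1/2}\vertiii{U}_{-q,-1}$. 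Combining the two inclusions yields the asserted equivalence. The computations are routine geometric-series estimates; the only mild care needed is the index bookkeeping (keeping $l,q\ge0$ and shifting by one so that the geometric series actually converge) and the observation that $|U|_{l,\infty}<\infty$ only supplies information on the disc of radius $2^{-l}$, so in the first direction the Cauchy estimate must be applied on that disc (via a limiting radius) rather than on a fixed larger one.
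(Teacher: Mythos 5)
Your proof is correct and follows essentially the same route as the paper's (sketched) argument: Cauchy estimates for the Taylor coefficients combined with a geometric series in one direction, and Cauchy--Schwarz with a geometric series in the other, with the same index conditions $q>2l$ and $2l>q$ and the same constants $(1-2^{2l-q})^{-1/2}$ and $(1-2^{q-2l})^{-1/2}$ that the paper states. Your version simply supplies the details the paper leaves implicit.
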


\begin{proof}
The proof is analogous to the two previous ones. Taking $q\in\mathbb{N}_{0}$
and $U\in\mathrm{Hol_{0}(\mathbb{C})}$, so that $\vertiii{U}_{-q,-1}<\infty$,
we show that there exist $C=(1-2^{q}|z|^{2})^{-2^{-1}}>0$ and $l\in\mathbb{N}_{0}$
$(l>\frac{q}{2})$, such that $|U|_{l,\infty}\leq C\vertiii{U}_{-q,-1}.$
In turn, given $l\in\mathbb{N}_{0}$ and $U\in\mathrm{Hol_{0}(\mathbb{C})}$,
with $|U|_{l,\infty}<\infty$, we obtain $q\in\mathbb{N}_{0}$ $(q>2l)$,
and $K=(1-2^{-q+2l})^{-2^{-1}}>0$ with $\vertiii{U}_{-q,-1}\leq K|U|_{l,\infty.}$
\end{proof}

\section{Appell Polynomials and Bell Polynomials}

\label{sec:Appell-Bell-connection}In this section, we recall some
fundamental concepts and results well known in the literature and
needed in what follows. More precisely, the notion of Stirling numbers
and Bell polynomials as well as Appell sequences are introduced (see
Subsection \ref{subsec:Stirling-Numbers}) and their connection with
Appell polynomials are developed in Subsection \ref{subsec:Connection-Appell-Bell}.

\subsection{Stirling Numbers and Bell Polynomials}

\label{subsec:Stirling-Numbers}
\begin{defn}[Stirling numbers of the first kind, see \cite{Olver2010}]
\label{def:Stirling-numbers-first-kind} Given $n,m\in\mathbb{N}_{0}$,
with $0\leq m\leq n$, the \textit{Stirling numbers of the first kind},
denoted by $s(n,m)$, is defined as $(-1)^{n-m}$ times the number
of ways to arrange $n$ distinct objects around $m$ (indistinguishable)
circles such that each circle has at least one object.
\end{defn}

\begin{defn}[Stirling numbers of the second kind, see \cite{CM92}]
\label{def:Stirling-numbers-second-kind} Given two nonnegative integers
$n$ and $m$, the \textit{Stirling numbers of the second kind}, denoted
by $S(n,m)$, is defined as the number of ways of distributing $n$
distinct objects into $m$ identical boxes such that no box is empty.
\end{defn}

\begin{prop}[see \cite{Olver2010}]
\label{prop:Stirling-generating-functions}The Stirling numbers have
the following generating functions \textup{
\[
(\mathrm{e}^{z}-1)^{k}=k!\sum_{n=k}^{\infty}S(n,k)\frac{z^{n}}{n!}\quad\mathrm{and}\quad(\log(1+z))^{k}=k!\sum_{n=k}^{\infty}s(n,k)\frac{z^{n}}{n!}.
\]
}
\end{prop}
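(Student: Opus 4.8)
The plan is to prove both identities by expanding the $k$th powers directly as power series and then reading off the coefficients combinatorially; equivalently one could invoke the exponential formula for labelled structures (set partitions are ``sets of nonempty sets'', permutations are ``sets of cycles''), but the direct route is self-contained. For the \emph{second kind}: since $\mathrm{e}^{z}-1=\sum_{j\ge1}z^{j}/j!$, multiplying $k$ copies gives
\[
(\mathrm{e}^{z}-1)^{k}=\sum_{j_{1},\dots,j_{k}\ge1}\frac{z^{j_{1}+\dots+j_{k}}}{j_{1}!\cdots j_{k}!}=\sum_{n\ge k}\frac{z^{n}}{n!}\sum_{\substack{j_{1}+\dots+j_{k}=n\\ j_{i}\ge1}}\frac{n!}{j_{1}!\cdots j_{k}!}.
\]
The multinomial coefficient $n!/(j_{1}!\cdots j_{k}!)$ counts the ways to split $\{1,\dots,n\}$ into an ordered list of blocks of sizes $j_{1},\dots,j_{k}$, so summing over all compositions of $n$ into $k$ positive parts counts all ordered $k$-tuples of nonempty sets partitioning $\{1,\dots,n\}$. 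The $k$ blocks of such a partition are pairwise disjoint and nonempty, hence distinct, so each unordered partition is counted exactly $k!$ times; the inner sum therefore equals $k!\,S(n,k)$ by Definition~\ref{def:Stirling-numbers-second-kind}, and since $S(n,k)=0$ for $n<k$ this is the first identity.

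For the \emph{first kind}, I would first record $\log(1+z)=\sum_{j\ge1}(-1)^{j-1}z^{j}/j$, valid near the origin, and then raise it to the $k$th power:
\[
(\log(1+z))^{k}=\sum_{j_{1},\dots,j_{k}\ge1}\frac{(-1)^{(j_{1}-1)+\dots+(j_{k}-1)}\,z^{j_{1}+\dots+j_{k}}}{j_{1}\cdots j_{k}}=\sum_{n\ge k}(-1)^{n-k}\frac{z^{n}}{n!}\sum_{\substack{j_{1}+\dots+j_{k}=n\\ j_{i}\ge1}}\frac{n!}{j_{1}\cdots j_{k}},
\]
using that $(-1)^{(j_{1}-1)+\dots+(j_{k}-1)}=(-1)^{n-k}$ on the set $j_{1}+\dots+j_{k}=n$. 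Writing $n!/(j_{1}\cdots j_{k})=\bigl(n!/(j_{1}!\cdots j_{k}!)\bigr)(j_{1}-1)!\cdots(j_{k}-1)!$ exhibits the inner summand as: choose an ordered partition of $\{1,\dots,n\}$ into blocks of sizes $j_{1},\dots,j_{k}$, then a cyclic arrangement of each block (a set of size $j_{i}$ admits $(j_{i}-1)!$ of these). Summing over compositions counts ordered $k$-tuples of cycles whose supports partition $\{1,\dots,n\}$; dividing by $k!$ to pass to unordered tuples gives exactly the permutations of $\{1,\dots,n\}$ with $k$ cycles, that is, the number of ways to arrange $n$ distinct objects around $k$ indistinguishable nonempty circles. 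Hence the inner sum is $k!$ times this count, and combining with the sign $(-1)^{n-k}$ and the convention of Definition~\ref{def:Stirling-numbers-first-kind} yields $(\log(1+z))^{k}=k!\sum_{n\ge k}s(n,k)z^{n}/n!$.

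The only genuinely nontrivial point — the main obstacle — is the bookkeeping in these two combinatorial identifications: justifying cleanly that the displayed sums of multinomial coefficients (weighted, in the first-kind case, by the factors $(j_{i}-1)!$) count ordered set partitions, respectively ordered tuples of cycles, and that the passage from ordered to unordered contributes exactly the factor $k!$ because the $k$ blocks are automatically distinct. Alongside this one should track the sign $(-1)^{(j_{1}-1)+\dots+(j_{k}-1)}=(-1)^{n-k}$ explicitly and dispose of the boundary cases, namely $n<k$ (both sides vanish) and $k=0$ (both sides equal $1$, using $S(0,0)=s(0,0)=1$).
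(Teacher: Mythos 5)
Your proof is correct. Note, however, that the paper does not actually prove this proposition: it is stated with a citation to \cite{Olver2010} and used as a known fact, so there is no internal argument to compare against. What you have written is a legitimate, self-contained derivation from the combinatorial Definitions \ref{def:Stirling-numbers-first-kind} and \ref{def:Stirling-numbers-second-kind}. It is worth observing that the computational core of your argument --- expanding $(\mathrm{e}^{z}-1)^{k}$ and $(\log(1+z))^{k}$ by the multinomial theorem and collecting the coefficient of $z^{n}/n!$ as a sum over compositions $j_{1}+\dots+j_{k}=n$ with $j_{i}\ge1$ --- is exactly the manipulation the paper performs in its proof of Proposition \ref{prop:Stirling-numbers-explicit}, only in the opposite logical direction: there the generating functions are taken as given and the multinomial sums $A_{n}^{m}/m!$, $B_{n}^{m}/m!$ are read off as explicit formulas for $s(n,m)$, $S(n,m)$. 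You close the loop by identifying those same multinomial sums directly with the counts of ordered set partitions (respectively ordered tuples of disjoint cycles, via $n!/(j_{1}\cdots j_{k})=\bigl(n!/(j_{1}!\cdots j_{k}!)\bigr)\prod_{i}(j_{i}-1)!$) and dividing by $k!$, which is the step the paper outsources to the literature. The sign bookkeeping $(-1)^{(j_{1}-1)+\dots+(j_{k}-1)}=(-1)^{n-k}$ matches the convention $s(n,k)=(-1)^{n-k}c(n,k)$ of Definition \ref{def:Stirling-numbers-first-kind}, and your treatment of the degenerate cases $n<k$ and $k=0$ is fine; so the argument stands as a complete replacement for the external citation.
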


\begin{prop}
\label{prop:Stirling-numbers-explicit} The Stirling numbers can be
expressed as
\end{prop}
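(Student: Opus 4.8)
The plan is to prove the two identities
\[
s(n,m)=\frac{A_n^m}{m!}=\frac{1}{m!}\sum_{l_1+\dots+l_m=n}\frac{(-1)^{n+m}n!}{l_1\cdots l_m},\qquad S(n,m)=\frac{B_n^m}{m!}=\frac{1}{m!}\sum_{l_1+\dots+l_m=n}\frac{n!}{l_1!\cdots l_m!},
\]
(with $l_i\in\{1,\dots,n\}$ and the conventions $s(0,0)=S(0,0)=1$), simply by extracting Taylor coefficients from the generating functions recorded in Proposition \ref{prop:Stirling-generating-functions}. Both parts are handled the same way, so I would carry them out in parallel.

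For the second kind I would start from $\mathrm{e}^z-1=\sum_{l=1}^{\infty}z^l/l!$ and raise it to the $m$-th power, expanding the product as an iterated Cauchy product: the coefficient of $z^n$ in $(\mathrm{e}^z-1)^m$ is $\sum_{l_1+\dots+l_m=n}\prod_{i=1}^{m}\frac{1}{l_i!}$, the sum running over compositions of $n$ into $m$ positive parts (positivity forces $n\ge m$, the sum being empty, hence $0$, when $n<m$; the case $m=0$ gives $\delta_{n,0}$). Matching this with the coefficient $\frac{m!}{n!}S(n,m)$ coming from $(\mathrm{e}^z-1)^m=m!\sum_{n\ge m}S(n,m)z^n/n!$ gives $S(n,m)=\frac{n!}{m!}\sum_{l_1+\dots+l_m=n}\prod_i 1/l_i!=B_n^m/m!$.

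For the first kind I would do the same with $\log(1+z)=\sum_{l=1}^{\infty}\frac{(-1)^{l+1}}{l}z^l$, valid in a neighbourhood of the origin, so that the coefficient of $z^n$ in $(\log(1+z))^m$ is $\sum_{l_1+\dots+l_m=n}\prod_{i=1}^{m}\frac{(-1)^{l_i+1}}{l_i}$. The one point needing a line of care is the sign: since $\sum_i l_i=n$, one has $\prod_{i=1}^{m}(-1)^{l_i+1}=(-1)^{\sum_i(l_i+1)}=(-1)^{n+m}$, so this coefficient equals $(-1)^{n+m}\sum_{l_1+\dots+l_m=n}\frac{1}{l_1\cdots l_m}$. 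Comparing with $\frac{m!}{n!}s(n,m)$ obtained from $(\log(1+z))^m=m!\sum_{n\ge m}s(n,m)z^n/n!$ yields $s(n,m)=A_n^m/m!$.

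There is no real obstacle here: the argument is entirely a matter of expanding a power of a power series and comparing Taylor coefficients, which is legitimate because $\log(1+z)$ is holomorphic (and $\mathrm{e}^z-1$ entire) near the origin. The only mildly delicate points are the sign bookkeeping for $s(n,m)$ noted above and the tidy handling of the degenerate indices ($m=0$, or $n<m$), both of which are immediate.
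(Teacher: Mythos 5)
Your proposal is correct and takes essentially the same route as the paper: both expand the generating functions of Proposition~\ref{prop:Stirling-generating-functions} as $m$-fold Cauchy products over compositions $l_1+\dots+l_m=n$ and compare Taylor coefficients, with the sign identity $\prod_i(-1)^{l_i+1}=(-1)^{n+m}$ handling the first kind. The only cosmetic difference is that you read off the $z^n$-coefficient directly from $\log(1+z)=\sum_l(-1)^{l+1}z^l/l$, while the paper writes the same expansion in the normalized form $\sum_l A_l z^l/l!$ with $A_l=(-1)^{l+1}(l-1)!$ before simplifying.
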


\begin{enumerate}
\item $s(n,m)=\frac{A_{n}^{m}}{m!}$, where $A_{0}^{0}:=1$ and $A_{n}^{m}:={\displaystyle \!\!\!\!\!\!\sum_{l_{1}+\dots+l_{m}=n}\!\!\!\!\frac{(-1)^{n+m}n!}{l_{1}\dots l_{m}}}$,
$l_{i}\in\left\{ 1,\dots,n\right\} $ for all $i=1,\dots,m.$
\item $S(n,m)=\frac{B_{n}^{m}}{m!}$, where $B_{0}^{0}:=1$ and $B_{n}^{m}:=\!\!\!\!{\displaystyle \sum_{l_{1}+\dots+l_{m}=n}\frac{n!}{l_{1}!\dots l_{m}!}}$,
$l_{i}\in\left\{ 1,\dots,n\right\} $ for all $i=1,\dots,m.$
\end{enumerate}
\begin{proof}
Let $k,n,m\in\mathbb{N}_{0}$ be given. 
\begin{description}
\item [{1.\quad{}}] First, note that $s(0,0)=1$. By Proposition~\ref{prop:Stirling-generating-functions},
\begin{align*}
\sum_{n=m}^{\infty}s(n,m)\frac{z^{n}}{n!} & =\frac{(\log(1+z))^{m}}{m!}\\
 & =\frac{1}{m!}\left(\sum_{l=1}^{\infty}\frac{z^{l}}{l!}(-1)^{l+1}(l-1)!\right)^{m}\\
 & =\frac{1}{m!}\sum_{n=m}^{\infty}\frac{z^{n}}{n!}\sum_{l_{1}+\dots+l_{m}=n}\frac{n!}{l_{1}!\dots l_{m}!}\prod_{i=1}^{m}(-1)^{l_{i}+1}(l_{i}-1)!\\
 & =\frac{1}{m!}\sum_{n=m}^{\infty}\frac{z^{n}}{n!}\underbrace{\sum_{l_{1}+\dots+l_{m}=n}\frac{(-1)^{n+m}n!}{l_{1}\dots l_{m}}}_{=:A_{n}^{m}}\\
 & =\sum_{n=m}^{\infty}\frac{A_{n}^{m}}{m!}\frac{z^{n}}{n!}.
\end{align*}
 The result follows by comparing the coefficients in both sides of
the equation.
\item [{2.\quad{}}] Note that $S(0,0)=1$. By Proposition~\ref{prop:Stirling-generating-functions},
\begin{align*}
\sum_{n=m}^{\infty}S(n,m)\frac{z^{n}}{n!} & =\frac{(\mathrm{e}^{z}-1)^{m}}{m!}\\
 & =\frac{1}{m!}\left(\sum_{l=1}^{\infty}\frac{z^{l}}{l!}\right)^{m}\\
 & =\frac{1}{m!}\sum_{n=m}^{\infty}\frac{z^{n}}{n!}\underbrace{\sum_{l_{1}+\dots+l_{m}=n}\frac{n!}{l_{1}!\dots l_{m}!}}_{=:B_{n}^{m}}\\
 & =\sum_{n=m}^{\infty}\frac{B_{n}^{m}}{m!}\frac{z^{n}}{n!}.
\end{align*}
The claim follows again by comparing the coefficients in both sides
of the equation. \hfill{}$\qedhere$
\end{description}
\end{proof}

\begin{prop}[see \cite{Olver2010}]
\label{prop:Stirling-falling-factorial} For $n,k\in\mathbb{N}_{0}$,
we have
\end{prop}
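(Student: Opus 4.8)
The plan is to derive both identities from the generating-function formulas for the Stirling numbers recalled in Proposition~\ref{prop:Stirling-generating-functions}, combined with the binomial series
\[
(1+t)^{z}=\sum_{n=0}^{\infty}\binom{z}{n}t^{n}=\sum_{n=0}^{\infty}\frac{(z)_{n}}{n!}\,t^{n},
\]
valid for $z\in\mathbb{C}$ and $|t|<1$, where $(z)_{n}$ denotes the falling factorial as in the proof of Proposition~\ref{prop:gen-Appell-prop}--(P4).

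\textbf{First identity.} I would write $(1+t)^{z}=\exp\big(z\log(1+t)\big)$, expand the exponential, and insert the generating function of $s(n,k)$:
\[
(1+t)^{z}=\sum_{k=0}^{\infty}\frac{z^{k}}{k!}\big(\log(1+t)\big)^{k}=\sum_{k=0}^{\infty}\frac{z^{k}}{k!}\,k!\sum_{n=k}^{\infty}s(n,k)\frac{t^{n}}{n!}=\sum_{n=0}^{\infty}\frac{t^{n}}{n!}\sum_{k=0}^{n}s(n,k)z^{k}.
\]
Comparing the coefficient of $t^{n}/n!$ with the one coming from the binomial series above yields the claimed formula $(z)_{n}=\sum_{k=0}^{n}s(n,k)z^{k}$.

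\textbf{Second identity.} Here I would start from $e^{zt}$ and write it as $\big(1+(e^{t}-1)\big)^{z}$, so that the binomial series and the generating function of $S(n,k)$ give
\[
e^{zt}=\sum_{k=0}^{\infty}\frac{(z)_{k}}{k!}\big(e^{t}-1\big)^{k}=\sum_{k=0}^{\infty}\frac{(z)_{k}}{k!}\,k!\sum_{n=k}^{\infty}S(n,k)\frac{t^{n}}{n!}=\sum_{n=0}^{\infty}\frac{t^{n}}{n!}\sum_{k=0}^{n}S(n,k)(z)_{k}.
\]
Since also $e^{zt}=\sum_{n=0}^{\infty}(z^{n}/n!)\,t^{n}$, comparing coefficients of $t^{n}/n!$ gives $z^{n}=\sum_{k=0}^{n}S(n,k)(z)_{k}$.

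The only step that needs justification---and the main (very mild) obstacle---is the interchange of the two summations. This is harmless: for $|t|$ small enough both $|\log(1+t)|$ and $|e^{t}-1|$ are strictly less than $1$, so the double series converge absolutely and the rearrangement is legitimate by Fubini's theorem for series. Alternatively, one can run the entire computation in the ring of formal power series in $t$ with coefficients in $\mathbb{C}[z]$, where no convergence question arises; I would state it this way to keep the argument fully self-contained.
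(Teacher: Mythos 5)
Your argument is correct. Note, however, that the paper does not actually prove Proposition~\ref{prop:Stirling-falling-factorial}: it is quoted from the literature (the NIST Handbook \cite{Olver2010}) and used as a known fact, so there is no proof in the text to compare against. Your derivation is the standard one, and it is in exactly the same spirit as the computations the paper \emph{does} carry out nearby: expanding a generating function, inserting the series for $\big(\log(1+t)\big)^{k}$ or $\big(\mathrm{e}^{t}-1\big)^{k}$ from Proposition~\ref{prop:Stirling-generating-functions}, interchanging the order of summation, and comparing coefficients of $t^{n}/n!$ --- this is precisely the technique used in the proofs of Proposition~\ref{prop:gen-Appell-prop}--(P1), (P2) and of Proposition~\ref{prop:Stirling-numbers-explicit}. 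The two points you flag are handled correctly: the identity $\mathrm{e}^{zt}=\big(1+(\mathrm{e}^{t}-1)\big)^{z}$ and the binomial series both require $|t|$ small enough that $|\mathrm{e}^{t}-1|<1$ (and $|\log(1+t)|$ finite), and the interchange of sums is justified by absolute convergence there; since both sides of each resulting identity are polynomials in $z$ of degree $n$, agreement of the Taylor coefficients for all such $z$ gives the polynomial identities for all $x$. Your alternative of working in the ring of formal power series in $t$ over $\mathbb{C}[z]$ is the cleanest way to dispose of the convergence issue entirely, and would be a perfectly acceptable self-contained replacement for the citation.
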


\begin{enumerate}
\item $(x)_{n}=\sum_{k=0}^{n}s(n,k)x^{k}$,
\item $x^{n}=\sum_{k=0}^{n}S(n,k)(x)_{k}.$
\end{enumerate}
\begin{defn}[Appell sequences, see \cite{Appell82,YY09}]
\label{def:ap} A polynomial sequence $A_{n}(x)$ is said to be \textit{Appell
sequence for $g(t)$} if and only if 
\begin{equation}
\frac{1}{g(t)}\exp(xt)=\sum_{n=0}^{\infty}A_{n}(x)\frac{t^{n}}{n!},
\end{equation}
where 
\begin{equation}
g(t)=\sum_{n=0}^{\infty}g_{n}\frac{t^{n}}{n!}
\end{equation}
and $g_{0}\neq0.$
\end{defn}

\begin{thm}[General Leibniz Rule, see \cite{CM92}]
\label{thm:glr} If $f$ and $g$ are $n$-times differentiable functions,
then the product is also $n$-times differentiable and its $n$-th
derivative is given by 
\begin{equation}
(fg)^{(n)}=\sum_{k=0}^{n}\binom{n}{k}f^{(n-k)}g^{(k)},\label{eq:Leibniz-rule}
\end{equation}
where $\binom{n}{k}=\frac{n!}{k!(n-k)!}$ is the binomial coefficient
and $f^{(0)}=f.$
\end{thm}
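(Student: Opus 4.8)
The plan is to prove \eqref{eq:Leibniz-rule} by induction on $n\in\mathbb{N}_{0}$, with the ordinary product rule as the only analytic input. For the base case one checks $n=0$, where the identity reads $(fg)^{(0)}=fg$ and is trivial, and $n=1$, where it is precisely the product rule $(fg)'=f'g+fg'=\sum_{k=0}^{1}\binom{1}{k}f^{(1-k)}g^{(k)}$. (One may also simply take $n=1$ as the base case.)

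For the inductive step, assume the formula holds for some $n$ and that $f$ and $g$ are $(n+1)$-times differentiable. Differentiating the identity $(fg)^{(n)}=\sum_{k=0}^{n}\binom{n}{k}f^{(n-k)}g^{(k)}$ once more and applying the product rule termwise gives
\[
(fg)^{(n+1)}=\sum_{k=0}^{n}\binom{n}{k}f^{(n+1-k)}g^{(k)}+\sum_{k=0}^{n}\binom{n}{k}f^{(n-k)}g^{(k+1)}.
\]
In the second sum I would substitute $j=k+1$, so that it becomes $\sum_{j=1}^{n+1}\binom{n}{j-1}f^{(n+1-j)}g^{(j)}$, and then merge with the first (relabeled with index $j$). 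The terms with $1\le j\le n$ acquire coefficient $\binom{n}{j}+\binom{n}{j-1}=\binom{n+1}{j}$ by Pascal's rule, the term $j=0$ from the first sum has coefficient $1=\binom{n+1}{0}$, and the term $j=n+1$ from the second sum has coefficient $1=\binom{n+1}{n+1}$; collecting these yields $(fg)^{(n+1)}=\sum_{j=0}^{n+1}\binom{n+1}{j}f^{(n+1-j)}g^{(j)}$, which is the asserted formula at level $n+1$.

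The only place requiring care — and the one I would call the main obstacle, though it is purely combinatorial — is the index shift and the merging of the two sums, together with the verification that the boundary terms reproduce $\binom{n+1}{0}$ and $\binom{n+1}{n+1}$. I would also remark at the outset that differentiability of the product $fg$, and more generally of each partial product $f^{(n-k)}g^{(k)}$ appearing above, follows inductively from the differentiability hypotheses on $f$ and $g$ (again by the product rule), so that every expression written down is well defined at each stage of the induction.
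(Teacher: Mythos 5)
Your induction argument is correct and complete: the base case, the termwise differentiation, the index shift $j=k+1$, the application of Pascal's rule $\binom{n}{j}+\binom{n}{j-1}=\binom{n+1}{j}$, and the treatment of the boundary terms are all handled properly, and you rightly observe that the $(n+1)$-fold differentiability of $fg$ itself follows from the differentiability of each summand $f^{(n-k)}g^{(k)}$. Note that the paper does not prove this statement at all — it is quoted as a classical result with a reference to \cite{CM92} — so there is no in-paper argument to compare against; yours is the standard textbook proof and nothing further is needed.
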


\begin{thm}[Fa{\'a} di Bruno's Formula, see \cite{B56,J02}]
\emph{ \label{thm:fdb} }If $f$ and $g$ are functions for which
all necessary derivatives are defined, then 
\[
\frac{d^{n}}{dt^{n}}f(g(t))=\sum\frac{n!}{m_{1}!m_{2}!\dots m_{k}!}f^{(m_{1}+\dots+m_{k})}(g(t))\prod_{j=1}^{n}\left(\frac{g^{(j)}(t)}{j!}\right)^{m_{j}},
\]
where the sum is over all $n$-tuples of non-negative integers $(m_{1},\dots,m_{n})$
satisfying the constraint 
\[
1\cdot m_{1}+2\cdot m_{2}+\dots+n\cdot m_{n}=n.
\]
\end{thm}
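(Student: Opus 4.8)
The plan is to prove the formula by induction on $n$, with the chain rule furnishing the base case and repeated use of the product rule (cf.\ the General Leibniz Rule, Theorem~\ref{thm:glr}) driving the inductive step. It is cleanest to pass first through the equivalent index-free reformulation
\[
\frac{d^{n}}{dt^{n}}f(g(t)) = \sum_{\pi \in \Pi_{n}} f^{(|\pi|)}(g(t)) \prod_{B \in \pi} g^{(|B|)}(t),
\]
where $\Pi_{n}$ is the set of partitions of $\{1,\dots,n\}$ into nonempty blocks, $|\pi|$ denotes the number of blocks of $\pi$, and $|B|$ the cardinality of a block $B$; the stated closed form is then obtained by sorting the partitions according to their block-size profile.

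First I would establish the partition identity by induction on $n$. The case $n=1$ is the chain rule, since $\Pi_{1}$ consists of the single partition $\{\{1\}\}$. Assuming the identity at stage $n$, I differentiate the right-hand side once more and apply the product rule to each summand indexed by $\pi$: this produces one term in which $f^{(|\pi|)}(g(t))$ is replaced by $f^{(|\pi|+1)}(g(t))\,g'(t)$ and, for each block $B\in\pi$, one term in which the factor $g^{(|B|)}(t)$ is replaced by $g^{(|B|+1)}(t)$. The former matches the summand of $\Pi_{n+1}$ attached to the partition obtained from $\pi$ by adjoining the singleton $\{n+1\}$, the latter the summand attached to the partition obtained by inserting $n+1$ into $B$. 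Deleting $n+1$ from any $\sigma\in\Pi_{n+1}$ (and discarding the block it lay in when that block becomes empty) recovers a unique $\pi\in\Pi_{n}$ together with a record of which of the two operations was applied, so the full collection of differentiated terms reassembles exactly as $\sum_{\sigma\in\Pi_{n+1}} f^{(|\sigma|)}(g(t))\prod_{B\in\sigma} g^{(|B|)}(t)$, closing the induction.

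To recover the displayed formula, I would then group the partitions in $\Pi_{n}$ by type, saying that $\pi$ has type $(m_{1},\dots,m_{n})$ when it has exactly $m_{j}$ blocks of size $j$ for each $j$; this forces $\sum_{j=1}^{n} j\,m_{j}=n$ and $|\pi| = m_{1}+\dots+m_{n}$. All partitions of a fixed type contribute the identical summand $f^{(m_{1}+\dots+m_{n})}(g(t))\prod_{j=1}^{n}\bigl(g^{(j)}(t)\bigr)^{m_{j}}$, and a direct multinomial count (arrange the $n$ elements in a row, cut the row into the prescribed block sizes, then divide out the orderings inside blocks and among blocks of equal size) shows that the number of partitions of $\{1,\dots,n\}$ of type $(m_{1},\dots,m_{n})$ equals $n!/\bigl(\prod_{j=1}^{n} m_{j}!\,(j!)^{m_{j}}\bigr)$. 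Summing over all admissible types and absorbing each $(j!)^{-m_{j}}$ into $\bigl(g^{(j)}(t)/j!\bigr)^{m_{j}}$ yields precisely the asserted identity.

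The main obstacle is the combinatorial bookkeeping in the inductive step, namely checking that ``adjoin the singleton $\{n+1\}$'' and ``insert $n+1$ into an existing block'' jointly set up a bijection between $\Pi_{n+1}$ and the set of pairs consisting of a partition $\pi\in\Pi_{n}$ and a choice of one such operation, so that no partition of $\{1,\dots,n+1\}$ is counted twice or omitted; the multinomial enumeration of partitions of a fixed type is the second, milder combinatorial point. Everything analytic is just repeated use of the chain and product rules. One could instead induct directly on the stated formula and verify that the coefficients $n!/\prod_{j}m_{j}!$ satisfy the recursion the product rule generates, but that route merely relocates the same bijective content into a less transparent algebraic form.
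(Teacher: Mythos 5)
Your argument is correct and complete. Note, however, that the paper does not prove this statement at all: Fa\`a di Bruno's formula is quoted as a classical result with a reference to the literature (\cite{B56,J02}), so there is no in-paper proof to compare against. What you give is the standard set-partition proof: the index-free identity $\frac{d^{n}}{dt^{n}}f(g(t))=\sum_{\pi\in\Pi_{n}}f^{(|\pi|)}(g(t))\prod_{B\in\pi}g^{(|B|)}(t)$ established by induction, with the inductive step resting on the bijection between $\Pi_{n+1}$ and pairs consisting of a $\pi\in\Pi_{n}$ together with the choice of either adjoining $\{n+1\}$ as a singleton or inserting $n+1$ into an existing block; this bijection is set up correctly, and your multinomial count $n!/\bigl(\prod_{j}m_{j}!\,(j!)^{m_{j}}\bigr)$ for the number of partitions of a fixed type is also right, so the passage to the stated closed form (after absorbing the $(j!)^{-m_{j}}$ factors) is sound. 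The only cosmetic mismatch is that the paper's statement writes $m_{1}!\cdots m_{k}!$ and $f^{(m_{1}+\dots+m_{k})}$ where the index should run to $n$ as in your version; that is a typo in the statement, not a defect of your proof.
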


\begin{defn}[\textit{Bell polynomials, see} \cite{CM92}]
\label{def:Bell-polynomials}The \textit{Bell polynomials} $B_{n,k}(\cdot)$,
$n,k\in\mathbb{N}_{0}$ are defined for any $x_{1},x_{2},\dots,x_{n-k+1}\in\mathbb{R}$
by
\[
B_{n,k}(x_{1},x_{2},\dots,x_{n-k+1}):=\sum\frac{n!}{j_{1}!j_{2}!\dots j_{n-k+1}!}\left(\frac{x_{1}}{1!}\right)^{j_{1}}\left(\frac{x_{2}}{2!}\right)^{j_{2}}\dots\left(\frac{x_{n-k+1}}{(n-k+1)!}\right)^{j_{n-k+1}},
\]
where the sum is taken over all sequences $j_{1},j_{2},\dots,j_{n-k+1}$
of non-negative integers such that the following conditions are satisfied
\begin{enumerate}
\item $j_{1}+j_{2}+\dots+j_{n-k+1}=k,$
\item $j_{1}+2j_{2}+\dots+(n-k+1)j_{n-k+1}=n.$
\end{enumerate}
\end{defn}

For convenience, we use the short notation 
\[
B_{n,k}((x_{j})_{j=1}^{n-k+1}):=B_{n,k}(x_{1},x_{2},\dots,x_{n-k+1}).
\]

\begin{thm}[Fa{\'a} di Bruno's formula using Bell polynomial, see \cite{B56,J02}]
\emph{ }\label{thm:Faa-Bruno-formula}If $f(t)$ and $g(t)$ are
functions for which all necessary derivatives are defined, then 
\[
\frac{d^{n}}{dt^{n}}f(g(t))=\sum_{k=0}^{n}f^{(k)}(g(t))B_{n,k}(g'(t),g''(t),\dots,g^{(n-k+1)}(t)).
\]
\end{thm}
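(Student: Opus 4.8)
The plan is to derive this directly from the multi-index version of Fa\`a di Bruno's formula already recorded in Theorem~\ref{thm:fdb}, by collecting the terms according to the order of the derivative of $f$ that appears. Starting from
\[
\frac{d^{n}}{dt^{n}}f(g(t))=\sum_{(m_{1},\dots,m_{n})}\frac{n!}{m_{1}!\cdots m_{n}!}\,f^{(m_{1}+\dots+m_{n})}(g(t))\prod_{j=1}^{n}\left(\frac{g^{(j)}(t)}{j!}\right)^{m_{j}},
\]
the sum running over all $n$-tuples of nonnegative integers subject to $1\cdot m_{1}+2\cdot m_{2}+\dots+n\cdot m_{n}=n$, I would set $k:=m_{1}+\dots+m_{n}$ for each admissible tuple and partition the sum into the disjoint subsums indexed by the possible values of $k$; in each such subsum the single factor $f^{(k)}(g(t))$ can be pulled out.

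Next I would pin down the range of $k$ and the support of the admissible tuples for each fixed $k$. Since every index $j\ge 1$, we have $n=\sum_{j}jm_{j}\ge\sum_{j}m_{j}=k$, so $0\le k\le n$ (with $k=0$ forcing $n=0$). Moreover, if some $m_{j}>0$ with $j\ge n-k+2$, then the remaining $k-1$ units of $\sum_{i}m_{i}=k$ contribute at least $k-1$ to $\sum_{i}im_{i}$, whence $\sum_{i}im_{i}\ge (n-k+2)+(k-1)=n+1>n$, a contradiction; hence $m_{j}=0$ for all $j\ge n-k+2$. Thus for fixed $k$ the admissible tuples are exactly the sequences $(m_{1},\dots,m_{n-k+1})$ of nonnegative integers with $m_{1}+\dots+m_{n-k+1}=k$ and $1\cdot m_{1}+\dots+(n-k+1)m_{n-k+1}=n$, which is precisely the index set in Definition~\ref{def:Bell-polynomials} (with $j_{i}$ there in the role of $m_{i}$ and $x_{i}=g^{(i)}(t)$).

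Finally I would recognize each inner subsum as a Bell polynomial: for fixed $k$,
\[
\sum_{(m_{1},\dots,m_{n-k+1})}\frac{n!}{m_{1}!\cdots m_{n-k+1}!}\prod_{j=1}^{n-k+1}\left(\frac{g^{(j)}(t)}{j!}\right)^{m_{j}}=B_{n,k}\bigl(g'(t),g''(t),\dots,g^{(n-k+1)}(t)\bigr),
\]
by the very definition of $B_{n,k}$. Substituting this back and summing over $k=0,\dots,n$ then gives
\[
\frac{d^{n}}{dt^{n}}f(g(t))=\sum_{k=0}^{n}f^{(k)}(g(t))\,B_{n,k}\bigl(g'(t),g''(t),\dots,g^{(n-k+1)}(t)\bigr),
\]
as claimed. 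The only step needing a little care is the support argument that kills $m_{j}$ for $j\ge n-k+2$; this is exactly what makes the finite variable list $x_{1},\dots,x_{n-k+1}$ of $B_{n,k}$ the correct one, and everything else is bookkeeping of the two linear constraints on the $m_{j}$. (Alternatively, one could avoid invoking Theorem~\ref{thm:fdb} and instead induct on $n$, differentiating $\frac{d^{n}}{dt^{n}}f(g(t))$ once more via the Leibniz rule of Theorem~\ref{thm:glr} and matching the result against the standard recurrence $B_{n,k}=\sum_{i=1}^{n-k+1}\binom{n-1}{i-1}x_{i}B_{n-i,k-1}$, but the route through Fa\`a di Bruno is the shorter one.)
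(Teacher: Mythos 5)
Your argument is correct and complete. Note that the paper itself offers no proof of this theorem: it is recorded in the appendix as a classical fact with citations to \cite{B56,J02}, so there is nothing to compare your route against. Your reduction from the multi-index form of Fa\`a di Bruno's formula (Theorem~\ref{thm:fdb}) is the natural one, and you handle the one genuinely delicate point correctly, namely the support argument showing that $m_{j}=0$ for $j\ge n-k+2$ once $k=\sum_{i}m_{i}$ is fixed, which is exactly what makes the truncated variable list $g'(t),\dots,g^{(n-k+1)}(t)$ of $B_{n,k}$ match the index set in Definition~\ref{def:Bell-polynomials}. (As a minor point, you silently correct the typo in the paper's statement of Theorem~\ref{thm:fdb}, where the denominator should read $m_{1}!\cdots m_{n}!$ rather than $m_{1}!\cdots m_{k}!$; your version is the right one.)
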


\subsection{Connections Between Appell and Bell Polynomials}

\label{subsec:Connection-Appell-Bell} In general, the Appell polynomials
$A_{n}^{\lambda,\beta}(\cdot)$, $n\in\mathbb{N}$, may be explicitly
written in terms of the Bell polynomials and the moments of the fractional
Poisson measure $\pi_{\lambda,\beta}$, see Theorem \ref{thm:gen-Appell-Bell-relation}
below.

At first recall the definition of the moments of $\pi_{\lambda,\beta}$,
for any $n\in\mathbb{N}_{0}$, 
\[
M_{\lambda,\beta}(n):=\frac{\mathrm{d}^{n}}{\mathrm{d}z^{n}}l_{\pi_{\lambda,\beta}}(z)\big|_{z=0}=\frac{\mathrm{d}^{n}}{\mathrm{d}z^{n}}E_{\beta}(\lambda(\mathrm{e}^{z}-1))\big|_{z=0}.
\]

\begin{lem}
\label{lem:Appell-at-zero}For every $n\in\mathbb{N}_{0}$, we have
\[
A_{n}^{\lambda,\beta}(0)=\sum_{k=0}^{n}(-1)^{k}k!B_{n,k}\big((M_{\lambda,\beta}(j))_{j=1}^{n-k+1}\big).
\]
\end{lem}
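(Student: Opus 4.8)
The plan is to compute $A_{n}^{\lambda,\beta}(0)$ directly from the generating function \eqref{eq:Wick-Apell} by setting $x=0$ and applying Fa\`a di Bruno's formula in the Bell-polynomial form (Theorem~\ref{thm:Faa-Bruno-formula}). First I would observe that evaluating \eqref{eq:Wick-Apell} at $x=0$ gives
\[
\frac{1}{l_{\pi_{\lambda,\beta}}(z)}=\sum_{n=0}^{\infty}\frac{z^{n}}{n!}A_{n}^{\lambda,\beta}(0),
\]
so that $A_{n}^{\lambda,\beta}(0)=\frac{\mathrm{d}^{n}}{\mathrm{d}z^{n}}\big(l_{\pi_{\lambda,\beta}}(z)\big)^{-1}\big|_{z=0}$. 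The idea is to write this as the composition $\varphi\circ l_{\pi_{\lambda,\beta}}$ with the outer function $\varphi(w)=1/w$, whose $k$th derivative at $w=1=l_{\pi_{\lambda,\beta}}(0)$ is $\varphi^{(k)}(1)=(-1)^{k}k!$.

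Next I would apply Theorem~\ref{thm:Faa-Bruno-formula} with $f=\varphi$ and $g=l_{\pi_{\lambda,\beta}}$, evaluated at $z=0$:
\[
\frac{\mathrm{d}^{n}}{\mathrm{d}z^{n}}\varphi\big(l_{\pi_{\lambda,\beta}}(z)\big)\Big|_{z=0}
=\sum_{k=0}^{n}\varphi^{(k)}\big(l_{\pi_{\lambda,\beta}}(0)\big)\,B_{n,k}\big(l_{\pi_{\lambda,\beta}}'(0),\dots,l_{\pi_{\lambda,\beta}}^{(n-k+1)}(0)\big).
\]
Since by definition $l_{\pi_{\lambda,\beta}}^{(j)}(0)=M_{\lambda,\beta}(j)$ for every $j$, and $\varphi^{(k)}(1)=(-1)^{k}k!$, substituting yields exactly
\[
A_{n}^{\lambda,\beta}(0)=\sum_{k=0}^{n}(-1)^{k}k!\,B_{n,k}\big((M_{\lambda,\beta}(j))_{j=1}^{n-k+1}\big),
\]
which is the claim.

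The only points requiring a little care — and the closest thing to an obstacle — are the bookkeeping checks: that $l_{\pi_{\lambda,\beta}}(0)=1$ (immediate from \eqref{eq:LT-fPm} since $E_{\beta}(0)=1$), that the composition is legitimate because $\varphi(w)=1/w$ is analytic in a neighborhood of $w=1$ and $l_{\pi_{\lambda,\beta}}$ is entire (both recorded in the excerpt), and the elementary computation $\varphi^{(k)}(w)=(-1)^{k}k!\,w^{-k-1}$, so $\varphi^{(k)}(1)=(-1)^{k}k!$. One should also note the boundary term $k=0$: $B_{n,0}=\delta_{n,0}$, $\varphi^{(0)}(1)=1$, so the formula correctly gives $A_{0}^{\lambda,\beta}(0)=1$, consistent with $C_{0}^{\lambda,\beta}=1$ and \eqref{eq:Wick-Apell}. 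No genuine difficulty arises; the lemma is essentially a direct invocation of Fa\`a di Bruno's formula together with the identification of the Taylor coefficients of $l_{\pi_{\lambda,\beta}}$ as the moments $M_{\lambda,\beta}(j)$.
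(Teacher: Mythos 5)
Your proof is correct and rests on the same key idea as the paper's: Fa\`a di Bruno's formula in Bell-polynomial form (Theorem~\ref{thm:Faa-Bruno-formula}) applied to the composition of $w\mapsto 1/w$ with $l_{\pi_{\lambda,\beta}}$, together with the identifications $l_{\pi_{\lambda,\beta}}(0)=1$ and $l_{\pi_{\lambda,\beta}}^{(j)}(0)=M_{\lambda,\beta}(j)$. The only (cosmetic) difference is that the paper retains the factor $\mathrm{e}^{zx}$, expands the product by the general Leibniz rule, and sets $x=0$ only at the end --- which collapses to exactly the single term you compute directly --- so your version is, if anything, the cleaner one.
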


\begin{proof}
By definition of $A_{n}^{\lambda,\beta}(0)$ we have
\[
A_{n}^{\lambda,\beta}(0):=\frac{\mathrm{d}^{n}}{\mathrm{d}z^{n}}\mathrm{e}_{\pi_{\lambda,\beta}}(z;x)\big|_{z=x=0}.
\]
We denote by $f$ and $g$ the functions
\begin{align*}
f(z) & :=\mathrm{e}_{\pi_{\lambda,\beta}}(z;0)=\frac{1}{l_{\pi_{\lambda,\beta}}(z)}=\frac{1}{E_{\beta}(\lambda(\mathrm{e}^{z}-1))},\\
g(z) & :=\mathrm{\mathrm{e}}^{zx}.
\end{align*}
Hence, using the general Leibniz rule \eqref{eq:Leibniz-rule}, $A_{n}^{\lambda,\beta}(0)$
may be expression as
\[
A_{n}^{\lambda,\beta}(0)=\frac{\mathrm{d}^{n}}{\mathrm{d}z^{n}}\big(f(z)g(z)\big)\big|_{z=x=0}=\sum_{k=0}^{n}\binom{n}{k}f^{(n-k)}(z)g^{(k)}(z)\big|_{z=x=0}.
\]
On one hand, it is clear that 
\[
g^{(k)}(z)=x^{k}g(z).
\]
On the other hand, to compute $f^{(n-k)}(z)$ we use the Fa{\'a}
di Bruno formula given in terms of Bell polynomials, see Theorem~\ref{thm:Faa-Bruno-formula}.
Namely, we represent 
\[
f(z)=h\big(l_{\pi_{\lambda,\beta}}(z)\big),
\]
where $h(z)=\frac{1}{z}$ and obtain
\begin{align*}
f^{(n-k)}(z) & =\sum_{j=0}^{n-k}h^{(j)}\big(l_{\pi_{\lambda,\beta}}(z)\big)B_{n-k,j}\big(l'_{\pi_{\lambda,\beta}}(z),l''_{\pi_{\lambda,\beta}}(z),\dots,l_{\pi_{\lambda,\beta}}^{(n-k-j+1)}(z)\big).\\
 & =\sum_{j=0}^{n-k}\frac{(-1)^{j}j!}{(l_{\pi_{\lambda,\beta}}(z))^{j+1}}B_{n-k,j}\big((l_{\pi_{\lambda,\beta}}^{(i)}(z))_{i=1}^{n-k-j+1}\big).
\end{align*}
Putting together yields
\[
\frac{\mathrm{d}^{n}}{\mathrm{d}z^{n}}\big(f(z)g(z)\big)=\sum_{k=0}^{n}\binom{n}{k}\left[\sum_{j=0}^{n-k}\frac{(-1)^{j}j!}{(l_{\pi_{\lambda,\beta}}(z))^{j+1}}B_{n-k,j}\big((l_{\pi_{\lambda,\beta}}^{(i)}(z))_{i=1}^{n-k-j+1}\big)\right]x^{k}g(z)
\]
and evaluating this expression at $z=x=0$ gives the claimed result.
\end{proof}
\begin{thm}
\label{thm:Appell-Bell-relation}The Appell polynomials $A_{n}^{\lambda,\beta}(\cdot)$,
$n\in\mathbb{N},$ generated by the fractional Poisson measure $\pi_{\lambda,\beta}$
are given for any $x\in\mathbb{R}$ in terms of the Bells polynomials
as
\begin{equation}
A_{n}^{\lambda,\beta}(x)=\sum_{k=0}^{n}\binom{n}{k}\Bigg[\sum_{i=0}^{n-k}(-1)^{i}i!B_{n-k,i}((M_{\lambda,\beta}(j))_{j=1}^{n-k-i+1})\Bigg]x^{k}.\label{eq:bell-Apell}
\end{equation}
\end{thm}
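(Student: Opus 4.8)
The plan is to mimic the computation already carried out in the proof of Lemma~\ref{lem:Appell-at-zero}, but to keep the variable $x$ free in the exponential factor instead of setting it to zero at the end. First I would recall that, by the definition \eqref{eq:Wick-Apell} of the Appell polynomials,
\[
A_n^{\lambda,\beta}(x) = \frac{\mathrm{d}^n}{\mathrm{d}z^n}\mathrm{e}_{\pi_{\lambda,\beta}}(z;x)\Big|_{z=0} = \frac{\mathrm{d}^n}{\mathrm{d}z^n}\big(f(z)g(z)\big)\Big|_{z=0},
\]
where $f(z):=1/l_{\pi_{\lambda,\beta}}(z)=1/E_\beta(\lambda(\mathrm{e}^z-1))$ and $g(z):=\mathrm{e}^{zx}$, exactly as in Lemma~\ref{lem:Appell-at-zero}.

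Next I would apply the general Leibniz rule (Theorem~\ref{thm:glr}) to the product $fg$, which gives
\[
A_n^{\lambda,\beta}(x) = \sum_{k=0}^n \binom{n}{k} f^{(n-k)}(0)\, g^{(k)}(0).
\]
Since $g^{(k)}(z) = x^k\mathrm{e}^{zx}$ we have $g^{(k)}(0) = x^k$, so it remains only to identify $f^{(n-k)}(0)$. But $f(z) = \mathrm{e}_{\pi_{\lambda,\beta}}(z;0)$, so $f^{(n-k)}(0) = A_{n-k}^{\lambda,\beta}(0)$, and Lemma~\ref{lem:Appell-at-zero} furnishes
\[
f^{(n-k)}(0) = \sum_{i=0}^{n-k}(-1)^i i!\, B_{n-k,i}\big((M_{\lambda,\beta}(j))_{j=1}^{n-k-i+1}\big).
\]
Substituting this into the Leibniz expansion yields \eqref{eq:bell-Apell} at once.

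If one prefers not to invoke Lemma~\ref{lem:Appell-at-zero} as a black box, the same conclusion follows by re-running its proof directly: write $f = h\circ l_{\pi_{\lambda,\beta}}$ with $h(w)=1/w$, apply Fa\`a di Bruno's formula in Bell-polynomial form (Theorem~\ref{thm:Faa-Bruno-formula}) to compute $f^{(n-k)}(z)$, and then evaluate at $z=0$ using $l_{\pi_{\lambda,\beta}}(0)=1$ together with $l_{\pi_{\lambda,\beta}}^{(i)}(0)=M_{\lambda,\beta}(i)$. There is no genuine obstacle in this argument; the only thing to watch is the bookkeeping of the nested index ranges and of the Bell-polynomial arguments, so that the inner sum over $i$ runs from $0$ to $n-k$ and $B_{n-k,i}$ receives precisely the moments $M_{\lambda,\beta}(1),\dots,M_{\lambda,\beta}(n-k-i+1)$, matching exactly the right-hand side of \eqref{eq:bell-Apell}.
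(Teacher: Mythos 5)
Your proof is correct and rests on the same essential ingredients as the paper's: Lemma~\ref{lem:Appell-at-zero} for the value at $x=0$, together with the product structure $\mathrm{e}_{\pi_{\lambda,\beta}}(z;x)=\mathrm{e}^{zx}\cdot l_{\pi_{\lambda,\beta}}(z)^{-1}$, which via the Leibniz rule gives $A_{n}^{\lambda,\beta}(x)=\sum_{k=0}^{n}\binom{n}{k}A_{n-k}^{\lambda,\beta}(0)\,x^{k}$. If anything, your direct Leibniz computation is more explicit than the paper's one-line appeal to \eqref{eq:Appell-z^n} and uniqueness of the representation in powers of $x$, and it correctly identifies $f^{(n-k)}(0)=A_{n-k}^{\lambda,\beta}(0)$, so no gap remains.
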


\begin{proof}
It follows from \eqref{eq:Appell-z^n}, Lemma \ref{lem:Appell-at-zero}
and the uniqueness of the representation in terms of the usual powers
$x^{k}$.
\end{proof}
\begin{rem}
The property \eqref{eq:bell-gen-appell} of the Appell polynomials
$A_{n}^{\lambda,\beta}$, $n\in\mathbb{N}_{0}$, is not specific of
the fractional Poisson measure $\pi_{\lambda,\beta}$. In fact, given
a probability space $(\mathbb{R},\mathcal{B}(\mathbb{R}),\mu)$ such
that the Laplace transform $l_{\mu}$ is an analytic function, then
the Appell polynomial sequence generated by the corresponding Wick
exponential $\mathrm{e}_{\mu}(\cdot;x)$ also satisfies the mentioned
property. This follows from the definition of the Wick exponential
and the same arguments as in the proof of Lemma \ref{lem:gen-Appell-at-zero}
and again Proposition~\ref{prop:gen-Appell-prop}--(P2) from Proposition
\ref{prop:gen-Appell-prop}.
\end{rem}

\subsection{Connections Between Generalized Appell Polynomials and Bell Polynomials}

\label{subsec:Connection-Gen-Appell-Bell} We also give an explicit
representation of the generalized Appell polynomials $C_{n}^{\lambda,\beta}(\cdot)$,
$n\in\mathbb{N}$, in terms of the Bell polynomials, see Theorem \ref{thm:gen-Appell-Bell-relation}
below.
\begin{lem}
\label{lem:gen-Appell-at-zero}For every $n\in\mathbb{N}_{0}$, we
have
\[
C_{n}^{\lambda,\beta}(0)=\sum_{j=0}^{n}(-1)^{j}j!B_{n,j}\left(\left(\tilde{M}_{\lambda,\beta}(m)\right)_{m=1}^{n-j+1}\right).
\]
\end{lem}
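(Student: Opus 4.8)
The plan is to read off $C_n^{\lambda,\beta}(0)$ from the defining generating function with $x=0$ and then apply the Fa{\'a} di Bruno formula in its Bell-polynomial form, Theorem~\ref{thm:Faa-Bruno-formula}. First I would set $x=0$ in the expansion \eqref{eq:Wick-expo}. Since $\mathrm{e}_{\pi_{\lambda,\beta}}(f(z);x)=\mathrm{e}^{xf(z)}/l_{\pi_{\lambda,\beta}}(f(z))$ and $\mathrm{e}^{0\cdot f(z)}=1$, while $l_{\pi_{\lambda,\beta}}(f(z))=E_\beta(\lambda z)$ because $f$ inverts $g(z)=\mathrm{e}^z-1$ (already used in the proof of Proposition~\ref{prop:gen-Appell-prop}--(P3)), this gives
\[
\sum_{n=0}^{\infty}\frac{z^n}{n!}\,C_n^{\lambda,\beta}(0)=\frac{1}{E_\beta(\lambda z)},
\]
equivalently $C_n^{\lambda,\beta}(0)=\dfrac{\mathrm{d}^n}{\mathrm{d}z^n}\big(1/E_\beta(\lambda z)\big)\big|_{z=0}$. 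This is the same starting point as in Lemma~\ref{lem:Appell-at-zero}; the present case is simpler because at $x=0$ the factor $\mathrm{e}^{xf(z)}$ is identically $1$, so no general Leibniz rule is needed and only the chain-rule step remains.

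Next I would recall, again from the proof of (P3), the expansion $E_\beta(\lambda z)=\sum_{m=0}^{\infty}\frac{z^m}{m!}\,\tilde{M}_{\lambda,\beta}(m)$ with $\tilde{M}_{\lambda,\beta}(m)=m!\lambda^m/\Gamma(m\beta+1)$; in particular $E_\beta(0)=1$ and $\frac{\mathrm{d}^m}{\mathrm{d}z^m}E_\beta(\lambda z)\big|_{z=0}=\tilde{M}_{\lambda,\beta}(m)$. Writing $1/E_\beta(\lambda z)=h\big(E_\beta(\lambda z)\big)$ with $h(w)=1/w$, so that $h^{(j)}(w)=(-1)^j j!\,w^{-(j+1)}$ and hence $h^{(j)}(E_\beta(0))=(-1)^j j!$, Theorem~\ref{thm:Faa-Bruno-formula} with outer function $h$ and inner function $z\mapsto E_\beta(\lambda z)$, evaluated at $z=0$, yields
\[
C_n^{\lambda,\beta}(0)=\sum_{j=0}^{n}h^{(j)}(E_\beta(0))\,B_{n,j}\big(\tilde{M}_{\lambda,\beta}(1),\dots,\tilde{M}_{\lambda,\beta}(n-j+1)\big)=\sum_{j=0}^{n}(-1)^j j!\,B_{n,j}\big((\tilde{M}_{\lambda,\beta}(m))_{m=1}^{n-j+1}\big),
\]
which is exactly the claimed identity. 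The case $n=0$ is consistent, since only the $j=0$ term remains and $B_{0,0}=1$, so $C_0^{\lambda,\beta}(0)=1$.

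I do not expect any real obstacle: the argument is a streamlined version of the proof of Lemma~\ref{lem:Appell-at-zero}. The only steps needing a word of justification are the identity $l_{\pi_{\lambda,\beta}}(f(z))=E_\beta(\lambda z)$ and the applicability of Theorem~\ref{thm:Faa-Bruno-formula}, and both are immediate because $E_\beta$ is entire with $E_\beta(0)=1$, so $z\mapsto 1/E_\beta(\lambda z)$ is holomorphic in a neighbourhood of the origin and all the derivatives involved exist.
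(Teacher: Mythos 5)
Your proof is correct and follows essentially the same route as the paper: both reduce the claim to computing $\frac{\mathrm{d}^{n}}{\mathrm{d}z^{n}}\big(1/E_{\beta}(\lambda z)\big)\big|_{z=0}$ and then apply the Fa{\'a} di Bruno formula (Theorem~\ref{thm:Faa-Bruno-formula}) with outer function $h(w)=1/w$ and the derivative data $\frac{\mathrm{d}^{m}}{\mathrm{d}z^{m}}E_{\beta}(\lambda z)\big|_{z=0}=\tilde{M}_{\lambda,\beta}(m)$. The only difference is cosmetic: the paper keeps the factor $\mathrm{e}^{x\log(1+z)}$ and runs the general Leibniz rule in parallel with Lemma~\ref{lem:Appell-at-zero} before setting $x=0$, whereas you set $x=0$ at the outset so that this factor is identically $1$ and the Leibniz step becomes unnecessary.
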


\begin{proof}
Following a process analogous to the proof of Lemma \ref{lem:Appell-at-zero},
we use the general Leibniz rule on the polynomials $C_{n}^{\lambda,\beta}(\cdot)$,
$n\in\mathbb{N}$ i.e.,
\[
C_{n}^{\lambda,\beta}(0)=\frac{\mathrm{d}^{n}}{\mathrm{d}z^{n}}\big(f(z)g(z)\big)\big|_{z=x=0}=\sum_{k=0}^{n}\binom{n}{k}f^{(n-k)}(z)g^{(k)}(z)\big|_{z=x=0}
\]
where 
\begin{align*}
f(z) & :=\mathrm{e}_{\pi_{\lambda,\beta}}(z;0)=\frac{1}{l_{\pi_{\lambda,\beta}}(\log(1+z))}=\frac{1}{E_{\beta}(\lambda z)},\\
g(z) & :=\mathrm{\mathrm{e}}^{x\log(1+z)}.
\end{align*}
Here, we use the Fa{\'a} di Bruno formula to obtain
\begin{align*}
f^{(n-k)}(z) & =\sum_{j=0}^{n-k}\frac{(-1)^{j}j!}{(l_{\pi_{\lambda,\beta}}(z))^{j+1}}B_{n-k,j}\left(\left(\sum_{p=i}^{\infty}\frac{p!}{(p-i)!}\frac{\lambda^{p}z^{p-i}}{\Gamma(p\beta+1)}\right)_{i=1}^{n-k-j+1}\right)
\end{align*}
and 
\begin{align*}
g^{(k)}(z) & =\sum_{l=0}^{k}x^{l}\mathrm{e}^{x\log(1+z)}B_{k,l}\left(\left(\frac{(-1)^{n+1}(n-1)!}{(1+z)^{n}}\right)_{m=1}^{k-l+1}\right).\qedhere
\end{align*}
\end{proof}
\begin{thm}
\label{thm:gen-Appell-Bell-relation}The generalized Appell polynomials
$C_{n}^{\lambda,\beta}(\cdot)$, $n\in\mathbb{N},$ generated by the
fractional Poisson measure $\pi_{\lambda,\beta}$ are given for any
$x\in\mathbb{R}$ in terms of the Bells polynomials as
\begin{equation}
C_{n}^{\lambda,\beta}(x)=\sum_{k=0}^{n}\binom{n}{k}\Bigg[\sum_{i=0}^{n-k}(-1)^{i}i!B_{n-k,i}\left(\left(\tilde{M}_{\lambda,\beta}(j)\right)_{j=1}^{n-k-i+1}\right)\Bigg](x)_{k},\label{eq:bell-gen-appell}
\end{equation}
where $(x)_{k}$ are the falling factorials.
\end{thm}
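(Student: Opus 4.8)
The plan is to obtain \eqref{eq:bell-gen-appell} by combining two facts already established above: the addition formula (P4) of Proposition~\ref{prop:gen-Appell-prop} and the evaluation of $C_{k}^{\lambda,\beta}$ at the origin given in Lemma~\ref{lem:gen-Appell-at-zero}. This mirrors exactly the proof of Theorem~\ref{thm:Appell-Bell-relation}, with the ordinary powers $x^{k}$ replaced by the falling factorials $(x)_{k}$ and the moments $M_{\lambda,\beta}$ replaced by the reduced moments $\tilde{M}_{\lambda,\beta}(m)=m!\lambda^{m}/\Gamma(m\beta+1)$, the change being a reflection of the fact that the generating function here is composed with $f(z)=\log(1+z)$ rather than used directly.

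Concretely, first I would take $x=0$ in (P4), i.e.\ in $C_{n}^{\lambda,\beta}(x+y)=\sum_{k=0}^{n}\binom{n}{k}C_{k}^{\lambda,\beta}(x)(y)_{n-k}$, to get
\[
C_{n}^{\lambda,\beta}(y)=\sum_{k=0}^{n}\binom{n}{k}C_{k}^{\lambda,\beta}(0)\,(y)_{n-k},\qquad y\in\mathbb{R}.
\]
This already writes $C_{n}^{\lambda,\beta}$ as a linear combination of the falling factorials $(y)_{0},\dots,(y)_{n}$. Next I would substitute $C_{k}^{\lambda,\beta}(0)=\sum_{i=0}^{k}(-1)^{i}i!\,B_{k,i}\big((\tilde{M}_{\lambda,\beta}(m))_{m=1}^{k-i+1}\big)$ from Lemma~\ref{lem:gen-Appell-at-zero} into this identity, producing a double sum over $k$ and $i$. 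Finally, reindexing the outer summation by $k\mapsto n-k$ --- which leaves $\binom{n}{k}$ invariant, turns $(y)_{n-k}$ into $(y)_{k}$, and turns the inner sum into $\sum_{i=0}^{n-k}(-1)^{i}i!\,B_{n-k,i}\big((\tilde{M}_{\lambda,\beta}(j))_{j=1}^{n-k-i+1}\big)$ --- yields precisely \eqref{eq:bell-gen-appell} after renaming $y$ as $x$.

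There is no real analytic difficulty; the only point requiring care is the index bookkeeping, in particular checking that after the substitution the Bell polynomial $B_{n-k,i}$ indeed receives exactly $n-k-i+1$ arguments, consistent with Definition~\ref{def:Bell-polynomials}, and that the reindexing of the outer sum does not interfere with the inner one. Alternatively, one can bypass (P4) and argue directly: applying the general Leibniz rule to $C_{n}^{\lambda,\beta}(x)=\frac{\mathrm{d}^{n}}{\mathrm{d}z^{n}}\big(\mathrm{e}^{xf(z)}/l_{\pi_{\lambda,\beta}}(f(z))\big)\big|_{z=0}$ and using $\frac{\mathrm{d}^{k}}{\mathrm{d}z^{k}}\mathrm{e}^{xf(z)}\big|_{z=0}=(x)_{k}$ together with $\frac{\mathrm{d}^{m}}{\mathrm{d}z^{m}}\big(1/l_{\pi_{\lambda,\beta}}(f(z))\big)\big|_{z=0}=C_{m}^{\lambda,\beta}(0)$ leads to the same display, after which Lemma~\ref{lem:gen-Appell-at-zero} is applied as before.
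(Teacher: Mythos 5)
Your proposal is correct and follows exactly the route the paper intends: its one-line proof cites Proposition~\ref{prop:gen-Appell-prop}--(P4) and Lemma~\ref{lem:gen-Appell-at-zero}, and your argument (setting $x=0$ in (P4), substituting the Lemma's expression for $C_{k}^{\lambda,\beta}(0)$, and reindexing $k\mapsto n-k$) is precisely the detail that one-liner suppresses. The index bookkeeping you flag checks out, so nothing further is needed.
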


\begin{proof}
The proof follows from Proposition~\ref{prop:gen-Appell-prop}--(P4)
and Lemma \ref{lem:gen-Appell-at-zero}.
\end{proof}

\section*{Acknowledgments}

This work was partially supported by the Center for Research in Mathematics
and Applications (CIMA-UMa) related with the Statistics, Stochastic
Processes and Applications (SSPA) group, through the grant UIDB/MAT/04674/2020
of FCT-Funda{\c c\~a}o para a Ci{\^e}ncia e a Tecnologia, Portugal
and also by the Complex systems group of the Premiere Research Institute
of Science and Mathematics (PRISM), MSU-Iligan Institute of Technology.
Financial support of the Department of Science and Technology --
Accelerated Science and Technology Human Resource Development Program
(DOST-ASTHRDP) of the Philippines under the Research Enrichment (Sandwich)
Program is gratefully acknowledge.

\end{document}